\documentclass{aims}
\usepackage{amsmath}
  \usepackage{paralist}
  \usepackage{graphics} 
  \usepackage{epsfig} 
\usepackage{graphicx}  \usepackage{epstopdf}
 \usepackage[colorlinks=true]{hyperref}
\hypersetup{urlcolor=blue, citecolor=red}

\usepackage{tikz}

  \textheight=8.2 true in
   \textwidth=5.0 true in
    \topmargin 30pt
     \setcounter{page}{1}



\newtheorem{theorem}{Theorem}[section]
\newtheorem{corollary}{Corollary}

\newtheorem{lemma}[theorem]{Lemma}

\theoremstyle{definition}
\newtheorem{definition}[theorem]{Definition}
\newtheorem{remark}{Remark}

\newcommand{\p}{\partial}
\newcommand{\dd}{\mathrm{d}}
\newcommand{\norm}[1]{\left\Vert#1\right\Vert}

\newcommand{\R}{\mathbb{R}}
\newcommand{\D}{\mathbb{\D}}

\newcommand{\del}{{\partial}}


\title[Stability of transonic jets] 
      {Stability of transonic jets with strong rarefaction waves for two-dimensional steady compressible Euler system}

\author[Min Ding and Hairong Yuan]{}

\subjclass{Primary: 35L50, 35L65, 35Q31, 35R35; Secondary: 76N10.}
 \keywords{Compressible Euler equations, characteristic discontinuity, transonic, rarefaction wave, wave front tracking, interaction of waves, Glimm functional.}

 \email{minding@whut.edu.cn}
 \email{hryuan@math.ecnu.edu.cn}

\thanks{The research of Min Ding is supported by the Fundamental Research Funds for the Central Universities (WUT: 2016IVA074) and by the National Natural Science Foundation of China under Grant Nos. 11626176 and 11701435.
The research of Hairong
Yuan  is supported by the National Natural Science Foundation of China under Grant No. 11371141, and by Science and Technology Commission of Shanghai Municipality (STCSM) under grant No. 13dz2260400.}

\thanks{$^*$ Corresponding author: Hairong Yuan}

\begin{document}
\maketitle

\centerline{\scshape Min Ding}
\medskip
{\footnotesize
 \centerline{Department of Mathematics, School of Science }
   \centerline{Wuhan University of Technology}
   \centerline{ Wuhan 430070, China}
} 

\medskip

\centerline{\scshape Hairong Yuan$^*$}
\medskip
{\footnotesize
 \centerline{Department of Mathematics, Center for PDE,}
 \centerline{and Shanghai Key Laboratory of PMMP}
   \centerline{East China Normal University}
   \centerline{Shanghai 200241, China}
}

\bigskip

 \centerline{(Communicated by the associate editor name)}

\begin{abstract}
We study supersonic flow past a convex corner which is surrounded
by quiescent gas. When the pressure of the upstream supersonic
flow is larger than that of the quiescent gas, there appears a
strong rarefaction wave to rarefy the supersonic gas. Meanwhile, a transonic characteristic discontinuity appears to separate the
supersonic flow behind the rarefaction wave from the static gas. In this paper, we employ a wave front tracking method to
establish structural stability of such a flow pattern under non-smooth
perturbations of the upcoming supersonic flow. It is an initial-value/free-boundary  problem for the two-dimensional steady non-isentropic
compressible Euler system. The main ingredients are careful
analysis of wave interactions and construction of suitable Glimm
functional, to overcome the difficulty that the strong rarefaction wave has a large total variation.
\end{abstract}

\tableofcontents
\section{Introduction}\label{S:1}
\begin{center}
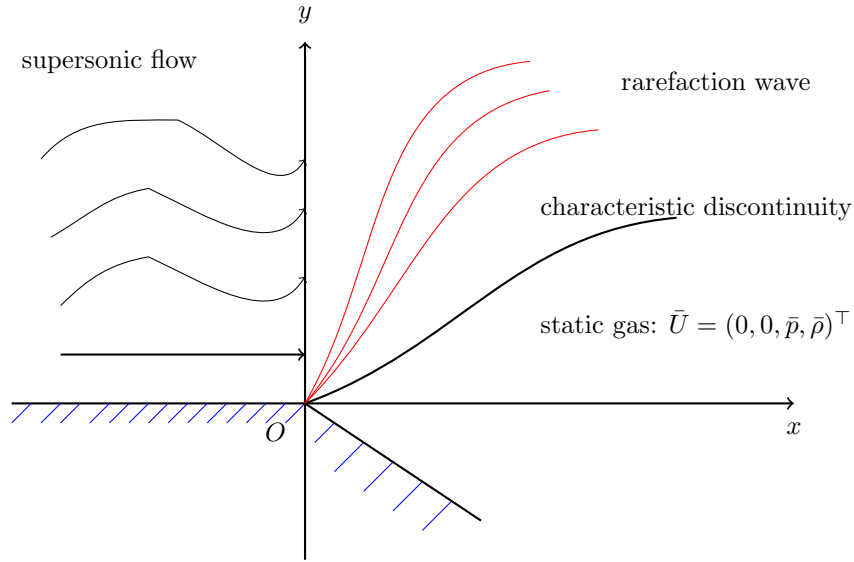
\begin{figure}
\begin{tikzpicture}[scale=1.3] \label{fig1}
\draw[thick][->](-3, 0)--(5, 0); \draw[thick][->](0, -1.6)--(0,
3.7);
 \draw [blue](-3, -0.2)--(-2.8,
0); \draw[blue](-2.7, -0.2)--(-2.5, 0); \draw[blue](-2.5,
-0.2)--(-2.3, 0); \draw[blue](-2.2, -0.2)--(-2, 0); \draw[blue](-2,
-0.2)--(-1.8, 0); \draw[blue](-1.8, -0.2)--(-1.6,0);
\draw[blue](-1.6, -0.2)--(-1.4, 0);
 \draw [blue](-1.4, -0.2)--(-1.2,
0); \draw[blue](-1.2, -0.2)--(-1, 0); \draw[blue](-1, -0.2)--(-0.8,
0); \draw[blue](-0.8, -0.2)--(-0.6, 0); \draw[blue](-0.6,
-0.2)--(-0.4, 0); \draw[blue](-0.4, -0.2)--(-0.2,0);
\draw[blue](-0.2,-0.2)--(0,0);
\draw [blue](0.1, -0.4)--(0.3, -0.2); \draw[blue](0.3, -0.7)--(0.6,
-0.4); \draw[blue](0.6, -0.9)--(0.9, -0.6); \draw[blue](0.9,
-1.1)--(1.2, -0.8); \draw[blue](1.2, -1.3)--(1.5, -1);
\draw[thick](0, 0)--(1.8,-1.2);
 \draw[thick][->](-2.5,  0.5)--(0, 0.5);
 \draw[thin][->](-2.5, 1) to
[out=45, in=-170](-1.6, 1.5) to[out=-25, in=-120](0, 1.3);
\draw[thin][->](-2.6, 1.7) to [out=30, in=-170](-1.6, 2.2)
to[out=-25, in=-120](0, 2); \draw[thin][->](-2.7, 2.5) to [out=50,
in=-180](-1.3, 2.9) to[out=-25, in=-120](0, 2.5);

\draw[thick](0, 0) to [out=20, in=-175](3.8, 1.9); \draw[red](0, 0)
to [out=45, in=-175](3, 2.8); \draw[red](0, 0) to [out=50,
in=-170](2.5, 3.2); \draw[red](0, 0) to [out=60, in=-175](2.3, 3.5);

\node at (4.0, 2.0) {$\text{characteristic discontinuity}$}; \node
at (4.2, 3.3) {$\text{rarefaction wave}$};
 \node[below] at (5,
-0.1) {$x$}; \node[below] at (-0.3, -0.1){$O$};
 \node [above]at (0, 3.8){$y$};
 \node at (4, 0.8){$\text{static gas: } \bar{U}=(0, 0, \bar{p},
 \bar{\rho})^\top$};
 \node at (-2, 3.5) {$\text{supersonic flow}$};
\end{tikzpicture}
\caption[]{\small  A transonic characteristic discontinuity separating supersonic flow behind the rarefaction wave and the surrounding static gas. }
\end{figure}
\end{center}

We are concerned with two-dimensional steady non-isentropic
compressible supersonic Euler flows passing a convex corner
surrounded by static gas. When the pressure of the upcoming
supersonic flow is larger than that of the quiescent gas, there may
appear a strong rarefaction wave to rarefy the upcoming flow to the
quiescent gas. Meanwhile, a characteristic discontinuity, which is a combination of vortex sheet and entropy wave, is generated to
separate the supersonic flow behind the rarefaction wave from the
still gas, see Figure 1. 
Under suitable assumptions on the upstream supersonic flow and the surrounding quiescent gas, we wish to establish structural stability of such a flow pattern in the class of functions with bounded variations  by considering an initial-value/free-boundary problem for the
two-dimensional steady full compressible Euler system.

Recall that the Euler system, consisting of conservation of mass, momentum and energy,
reads as
\begin{equation}\label{E:1.1}
\begin{cases}
\displaystyle \del_x(\rho u)
  + \del_y \left(\rho v\right)=0,\\[8pt]
\displaystyle \del_x(\rho u^2+p)
  + \del_y \left(\rho u v\right)=0,\\[8pt]
\displaystyle \del_x\left(\rho uv\right)
  + \del_y \left(\rho v^2+p \right) = 0,\\[8pt]
  \displaystyle \del_x\left(\rho u(E+\frac{p}{\rho})\right)
  + \del_y \left(\rho v(E+\frac{p}{\rho})\right) = 0,
\end{cases}
\end{equation}
where $\rho$,  $p$ and $(u, v)$ represent respectively  the density of mass, scalar pressure and velocity of the flow, and $E=\frac{1}{2}(u^2+v^2)+e$
is the total energy per unit mass, with $e$ the internal energy. Let
$S$ be the entropy. For polytropic gas, the constitutive
relations are given by
\[
p=\kappa \rho^{\gamma}\exp(S/c_v), \qquad
e=\frac{\kappa}{\gamma-1}\rho^{\gamma-1}\exp(S/c_v)=\frac{p}{(\gamma-1)\rho},
\]
with $\kappa, c_{v}, \gamma>1$ being positive constants. The corner and the Cartesian coordinates $(x,y)$ of the plane are illustrated in Figure 1. 

System \eqref{E:1.1} can be written in the general form of conservation laws:
\begin{equation}\label{E:1.2}
\del_x W(U)+\del_y H(U)=0, \quad U=(u, v, p, \rho)^{\top},
\end{equation}
where
\[
\begin{split}
&W(U)=\left(\rho u,\; \rho u^2+p, \;\rho uv,\; \rho u(E+\frac{p}{\rho})\right)^\top, \\[5pt]
&H(U)=\left(\rho v,\;\rho uv,\; \rho v^2+p,\; \rho
v(E+\frac{p}{\rho})\right)^\top.
\end{split}
\]
The eigenvalues of system \eqref{E:1.2}, namely, the roots of the
polynomial $\det(\lambda \nabla_U W(U)-\nabla_U H(U))$,  are
\[
\lambda_1=\frac{uv-c\sqrt{u^2+v^2-c^2}}{u^2-c^2}<
 \lambda_{2}=\frac{v}{u}< \lambda_3=\frac{uv+ c\sqrt{u^2+v^2-c^2}}{u^2-c^2},
\]
provided that $u>c>0$, where $\displaystyle c=\sqrt{{\gamma p}/{\rho}}$ is the local sonic
speed. The corresponding right eigenvectors
(null vectors of the matrix $\lambda\nabla_U W(U)-\nabla_U H(U)$)
are
\begin{eqnarray}\label{eq13}
\begin{split}
&r_j=k_j(-\lambda_j, 1, \; \rho(\lambda_j u-v), \frac{\rho(\lambda_j
u-v)}{c^2})^{\top},\; j=1, 3; \\ &r_{21}=(u, v, 0, 0)^{\top},\; r_{22}=(0, 0,0, \rho)^{\top}.
\end{split}
\end{eqnarray}
So for supersonic flow with $u>c>0$, all the characteristics are real and the four eigenvectors are linearly independent.  Hence by definition, the system \eqref{E:1.1} is hyperbolic in the positive $x$-direction if $u>c>0$, with constant characteristic multiplicities.
We may introduce the constants $k_j>0$ to normalize $r_j$ so that $\nabla_U \lambda_j(U)\cdot r_j(U)\equiv1$ for  $j=1, 3$ (see the Appendix for detailed computations), which means the first and the third characteristic families are genuinely  nonlinear. The second characteristic family is linearly degenerate, namely $\nabla_U \lambda_2(U)\cdot r_{2k}(U)\equiv0$  for $k=1,2.$

Next we formulate the domain and  boundary conditions to describe the transonic jet problem. Since we consider only time-independent flow, it is reasonable to assume that the state  below the
characteristic discontinuity is always static, and unchanged even if  the upcoming supersonic flow experienced  perturbations.  The static state is  denoted by $\bar{U}=(0, 0, \bar{p}, \bar{\rho})^{\top}$.  The characteristic discontinuity $\mathcal{C}$ itself is a free-boundary, which is the graph of a Lipschitz continuous function $y=g(x)$ on $x\geq 0$,
with $g(0)=0$. So the domain we consider is
$$\Omega_g=\{(x,y)\in\mathbb{R}^2: x>0, y>g(x)\}.$$
From the Rankine-Hugoniot jump conditions across a
characteristic discontinuity, we have the following two boundary conditions
\[
p=\bar{p}, \quad g'(x)=\frac{v(x, g(x))}{u(x, g(x))}\quad \text{ on }\;
y=g(x),
\]
where $\bar{p}$ is the constant pressure of the quiescent
gas. The second condition implies that $\mathcal{C}$ is
a characteristic boundary for the Euler equations \cite[p.3]{CKY1}. Regarding $x$ as time,  suppose that the supersonic flow on
$\mathcal{I}=\{(x, y)\in \R^2: x=0, y>0\}$  is given by
\[
U(0, y)=U_0(y), \qquad \text{with}\quad u_0(y)>c_0(y),
\]
which is the initial data.
Hence the main problem we will study in this paper is the Euler system \eqref{E:1.1} in the unknown domain $\Omega_g$, subjected to the following initial-value/free-boundary conditions
\begin{equation}
\label{E:1.3}
\begin{cases}
(u, v,  p, \rho)(0, y)=(u_0(y), v_0(y),  p_0(y), \rho_0(y)),&\text{ for}\; y\ge0;\\
p=\bar{p}, \qquad g'(x)=\frac{v}{u}(x, g(x)),&\text{ on }\;
y=g(x).
\end{cases}
\end{equation}

We will seek global entropy solutions of this problem.

\begin{definition}\label{D:1.1}
A pair of functions $(U(x,y), g(x))$, with $y=g(x)\in \mathrm{Lip}([0, +\infty); \mathbb{R})$,
and $U(x, y)\in L^\infty(\Omega_g;\mathbb{R}^4)$,  is an entropy solution of problem \eqref{E:1.2}\eqref{E:1.3}, if
\par \rm (i)  For any $\varphi(x, y)\in C_c^\infty(\mathbb{R}^2)$,  it holds that
\begin{eqnarray}
\begin{split}
&&\int_{\Omega_g}(W(U)\varphi_x+H(U)\varphi_y)\,\dd x \dd y+
\int_{\mathcal{C}}\left(H(U) -W(U)g'(x)\right)\varphi\,\dd \ell\\[5pt]
&&\ \ \ \  \  +\int^{+\infty}_{0}W(U_0)\varphi(0, y)\,\dd y=0;
\end{split}
\end{eqnarray}

\par \rm(ii) The following entropy inequality holds in the sense of
distributions:
\begin{equation*}
 \partial_x\left(\rho u S\right)+\partial_y \left(\rho v S\right)\geq 0 \quad \text{in} \;\;
\mathcal{D}'(\R^2).
\end{equation*}
\end{definition}

In the next section, we will show that for suitably chosen constant supersonic state $U_0(y)=U_+$ and static gas
$\bar{U}$, there exists an entropy solution $(U_\mathrm{b}, g_{\mathrm{b}})$ to
problem \eqref{E:1.2}\eqref{E:1.3}, which consists of a strong rarefaction wave of the
third characteristic family to decrease the pressure, and a transonic characteristic
discontinuity $y=g_\mathrm{b}(x)$ of the second characteristic family to separate the constant
supersonic state $U_-$ behind the rarefaction wave from the static gas $\bar{U}$.
Such a special solution is called a {\it background solution} in the sequel.
The purpose of this work is to show structural stability of these background solutions, which is described
in the following theorem.

\begin{theorem}\label{T:1.1}
For given constant supersonic state $U_+=(u_+, 0, p_+, \rho_+)^{\top}$, there is a positive number $p_*$. Let $\bar{p}$ be the pressure  of the static gas $\bar{U}$, with $p_*<\bar{p}<p_+$. Then
there exists a background solution $(U_{\mathrm{b}}, g_\mathrm{b})$
to problem \eqref{E:1.2}\eqref{E:1.3}, such that $u_\mathrm{b}>c_\mathrm{b}>0$.

Furthermore, there exist positive constants $\varepsilon_0, M_0, M_1$ such that if the initial data $U_0$ satisfies
\[
\|U_0(\cdot)-U_+\|_{L^{1}([0,+\infty))}+\mathrm{TV. }
U_0(\cdot)<\varepsilon\le\varepsilon_0,
 \]
where $\mathrm{TV. } U_0(\cdot)$ denotes the total variation of the
vector-valued function $U_0(y)$,  then problem
\eqref{E:1.2}\eqref{E:1.3} admits an entropy solution $(U(x,y), g(x))$, containing a 3-strong rarefaction wave, which is a small
perturbation of the background solution, in the sense that for almost all $x\ge0$, it holds
that
\begin{align}
|g'(x)-g'_\mathrm{b}(x)|\le& M_0\varepsilon,\label{freeest}\\
|\mathrm{TV. }\{p(x,\cdot):[g(x),+\infty)\}-\mathrm{TV. }\{p_{\mathrm{b}}(x,\cdot):[g_\mathrm{b}(x),+\infty)\}|\leq&
M_1\varepsilon,\label{eq:2.1.12}
\end{align}
and  for a.e. $(x,y)\in \Omega_g$,
\begin{equation}\label{E:1.5}
U(x, y)\in  D(U_+, \delta),
\end{equation}
with $\delta=M_1\varepsilon$.
The set $D(U_+, \delta)$ is defined by \eqref{E:3.4} in \S \ref{sec2}, which is a neighborhood of the background 3-rarefaction wave curve in the state space $\{U\in\mathbb{R}^4\}$, and is an invariant region for this problem.
\end{theorem}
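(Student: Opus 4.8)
\emph{Proof strategy.} The plan is to run a wave front tracking scheme on the unknown domain $\Omega_g$, treating the characteristic discontinuity as a tracked free boundary and the strong $3$-rarefaction as a distinguished large wave. I would first construct the background solution explicitly: starting from $U_+$ one follows the $3$-rarefaction wave curve in state space until the pressure reaches $\bar p$, which determines the constant state $U_-$ behind it; the number $p_*$ is chosen so that $p_*<\bar p$ forces the whole arc to remain supersonic, giving $u_{\mathrm b}>c_{\mathrm b}>0$, and $y=g_{\mathrm b}(x)$ is then the ray from the origin of slope $v_-/u_-$, for which the Rankine--Hugoniot relations $p=\bar p$ and $g'=v/u$ hold. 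At this stage I also fix the neighborhood $D(U_+,\delta)$ of this rarefaction arc and check that it is an invariant region for all Riemann problems that occur, so that \eqref{E:1.5} will hold automatically once the scheme converges.

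Next, for a mesh parameter $\nu\to 0$ I would construct approximate solutions $(U^\nu,g^\nu)$ as follows: replace $U_0$ by a piecewise constant function of total variation at most $\mathrm{TV.}\,U_0$; at $x=0$ and at every interaction point solve an approximate Riemann problem in which rarefactions are split into fans of fronts of strength $O(\nu)$, shocks and contacts are kept as single fronts, and artificial non-physical fronts are introduced to keep the front count finite; the strong $3$-rarefaction issuing from the corner is likewise discretized into $O(1/\nu)$ small rarefaction fronts, and the polygonal free boundary $g^\nu$ is updated by $(g^\nu)'=v/u$ using the trace just above it together with $p=\bar p$. The analytic core is the collection of interaction estimates: (a) weak--weak interactions obey the classical bounds with quadratic error $O(|\alpha||\beta|)$; (b) a weak $1$- or $2$-front crossing the strong rarefaction emerges with strength multiplied by a uniformly bounded factor $1+O(\text{strong-rarefaction strength})$, producing only quadratically small new weak waves and a quadratically small change of the strong rarefaction; (c) a weak $3$-front reaching the strong rarefaction merges into it or partially cancels it, with controlled $1$- and $2$-reflections; (d) a weak $1$-front hitting the characteristic discontinuity reflects as a weak $3$-front plus a perturbation of $g'$, and here I would compute the reflection coefficient from $p=\bar p$, $g'=v/u$ and check that it does not produce amplification, so that repeated reflections stay bounded.

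Because the strong rarefaction carries $O(1)$ total variation, it cannot be placed in the linear part of a Glimm functional. Instead I would introduce position- and family-dependent weights $w_i$ for the weak fronts — with a jump in weight across the strong-rarefaction fan and across the characteristic discontinuity — and set
\[
F^\nu(x)=\sum_{\text{weak fronts}}w_i|\alpha_i|+\bigl|(g^\nu)'(x)-g'_{\mathrm b}(x)\bigr|+K\!\!\sum_{\text{approaching}}\!\!|\alpha_i||\alpha_j|+K'\sigma^\nu(x),
\]
where $\sigma^\nu(x)$ is the residual strength of the strong rarefaction measured against the background and $K,K'$ are large constants. The weights must be arranged so that the decrease in weight when a $1$- or $2$-front crosses the strong rarefaction, and when a $1$-front reflects off the boundary into a $3$-front, strictly dominates the amplification factors in (b) and (d); with the estimates of the previous step, and for $\varepsilon\le\varepsilon_0$ small enough, this makes $F^\nu$ non-increasing in $x$, hence $F^\nu(x)\le F^\nu(0)\le C\varepsilon$ for all $x$. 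This gives uniform BV and $L^1$ bounds on the weak part of $U^\nu$ and uniform Lipschitz control of $g^\nu$, in particular $|(g^\nu)'-g'_{\mathrm b}|\le M_0\varepsilon$. Helly's theorem then yields a subsequence with $U^\nu\to U$ in $L^1_{\text{loc}}$ and $g^\nu\to g$ uniformly on compacts, and a routine argument (using that the total strength of non-physical fronts tends to $0$) shows that $(U,g)$ is an entropy solution in the sense of Definition~\ref{D:1.1}. Then \eqref{freeest} passes to the limit; \eqref{eq:2.1.12} follows from the pressure component of the weighted estimate, since $p$ is continuous across $2$-fronts and the strong rarefaction contributes a total pressure variation $p_+-\bar p$ up to $O(\varepsilon)$; and \eqref{E:1.5} is the invariant-region statement of the first step.

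The step I expect to be the main obstacle is the functional in the previous paragraph: one must find a single weighted functional that is monotone under \emph{every} interaction type at once — weak--weak, a weak front crossing the strong rarefaction (which fills an entire angular sector, so a given front may cross it more than once), a weak front merging into the strong rarefaction, and reflection off the characteristic discontinuity — while keeping $F^\nu(x)$ of order $\varepsilon$ rather than order $1$. The quantitative linchpin is the reflection coefficient at the transonic characteristic discontinuity in item (d): showing that it causes no amplification, so that the boundary weights can be chosen consistently with those across the strong rarefaction, is the fact on which the whole construction hinges.
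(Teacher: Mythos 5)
Your overall architecture --- explicit background solution, invariant region $D(U_+,\delta)$, front tracking with split rarefactions and non-physical fronts, a weighted Glimm functional, compactness and consistency --- coincides with the paper's. But the proposal stops exactly where the proof actually lives: you flag the monotone functional as ``the main obstacle'' without constructing it, and your sketch of it (``a jump in weight across the strong-rarefaction fan'', placed on the linear part) does not yet deliver monotonicity. The difficulty is that a weak $1$- or $2$-front crosses the strong fan one sub-front at a time, and each sub-interaction with a strong front $s$ creates new waves of size $O(|\alpha||s|)$; a single additive change of weight across the whole fan gives you nothing at these intermediate interactions. The paper's device is to attach to each weak front $\alpha$ the multiplicative weight $W(\alpha,x)=\exp\bigl(K_\omega\sum\{|s|: s \text{ below } \alpha\}\bigr)$ (and the mirror-image weight $\exp\bigl(K_{np}\sum\{|s|: s \text{ above } \epsilon\}\bigr)$ for non-physical fronts), so that crossing one sub-front $s$ decreases the term $|\alpha|\,W(\alpha,x)$ by roughly $K_\omega|\alpha||s|W$, which dominates the $O(|\alpha||s|)$ errors once $K_\omega$ is chosen large; in addition, $3$-weak shocks meeting the strong rarefaction are handled by cancellation (the term $F_1=|S(x)-\underline S|$ together with a separate large weight $K_3$ on $L_3$), not by a quadratic potential. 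You would need to commit to some such exponential (or at least multiplicative-per-sub-front) structure; an unspecified ``position-dependent weight'' is not yet a proof.

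Second, your plan for the boundary reflection is to ``check that it does not produce amplification.'' That is both unnecessary and not what the paper proves: Lemma \ref{l:4.3} only shows the reflection coefficient $K_{b_1}$ is positive and uniformly bounded by some $C_b$, with no claim that $C_b\le 1$. What saves the argument is the geometry: only $1$-fronts ever reach the characteristic free boundary, and the reflected wave is a $3$-front that moves away from it and never returns (it can only regenerate $1$-waves through quadratically small interaction errors). Hence there are no ``repeated reflections'' to iterate, and one simply weights $L_1$ by a constant $K$ large compared with $C_bK_3$ so that each reflection strictly decreases the functional (condition \eqref{eq413}). If you instead try to prove that the reflection coefficient is at most one, that step may simply be false for some background states, and your scheme would stall there. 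With these two points supplied, the rest of your outline (interaction estimates, $F\le C\varepsilon$, Helly, consistency via smallness of the non-physical fronts) matches the paper's proof.
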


We now review some known results on the stability of large solutions
for the stationary compressible Euler equations. The transonic characteristic discontinuity
separating supersonic flow from static gas  was firstly studied in
\cite{CKY1,CKY2013}. The case that the jet contains a strong shock
was solved  in \cite{KYZ}.  This paper is a continuation of these
works. The new feature here is the appearance of strong rarefaction
waves. To our knowledge, Zhang firstly studied an initial-boundary
value problem for the isentropic irrotational Euler equation with
strong rarefaction waves in \cite{Zhang}. Later, in \cite{CKZ1}, the
authors studied strong rarefaction wave in steady exothermically
reacting Euler flows,  and in  \cite{DKZ}, the authors considered a
piston problem for the case that  the piston was drawn away from the
gas which filled a straight thin tube ahead of the piston. There
appears a strong rarefaction wave in the tube. See also \cite{D1,D2,DL} for some generalizations. There are also many
works on flow fields containing strong supersonic shocks. Except
\cite{KYZ}, one may consult, for instance, the work of
Wang-Zhang \cite{WZ} on steady supersonic flow past a curved cone,
and Chen et.al. \cite{CZZ} on supersonic flow past a wedge.
Considerable progress has been made on the existence and
stability of transonic shocks in steady full Euler flows (see, for
example, \cite{LXY-2016} and references therein), which, unlike our case, are treating non-characteristic free-boundaries.  Structural
stability of supersonic characteristic discontinuities over Lipschitz
walls were thoroughly investigated by Chen et.al. in
\cite{CZZ-2006}.  There are also some results on transonic characteristic discontinuities for three-dimensional steady compressible Euler system, see \cite{WY2015,QX2017}.

In this paper we will employ a wave front tracking algorithm to construct a family of approximate solutions
to problem \eqref{E:1.2}\eqref{E:1.3}. Then by modifying appropriately the Glimm functional introduced
in \cite{GJ,bressan} and showing its monotonicity, we obtain compactness in the space of functions with bounded variations so that a subsequence of the approximate solutions converges to an entropy solution.  One of the main difficulties here is to obtain rather accurate estimates
when weak waves interact with strong rarefaction waves. One needs to introduce appropriate interaction potentials to
take into account of the fact that the total variation of the strong rarefaction wave is not small, and choose carefully weights for various terms in the Glimm functional. Some of the ideas were inspired by
\cite{amadori,Zhang}. However, different from previous works with strong rarefaction waves,  in our model a characteristic free-boundary, namely the strong transonic
characteristic discontinuity occurs. Therefore, the analysis is somewhat subtle and different. We recommend \cite{bressan, HR2015} for a general introduction of the front tracking method.

This paper is organized as follows. In \S \ref{sec2}, we review some
basic properties of elementary waves of the steady Euler system. Then we establish existence of
background solutions and show solvability of free-boundary
Riemann problems for system \eqref{E:1.1}. In \S \ref{E:4}, we outline the construction of
approximate solutions by wave front tracking method. In \S \ref{S:5}, we analyze local interaction estimates of perturbed waves and
reflections of waves on the transonic characteristic discontinuity in the front tracking process. Then we
construct a Glimm functional and prove its monotonicity. This manifests that the approximate solutions can really be computed for all $x>0$.
In \S \ref{sec5}, we show consistency, namely the limit of the approximate solution is actually an accurate entropy solution.  \S \ref{sec6} is a short appendix, contains some quantities used in the main text.

\section{Riemann Problems and Background Solutions}\label{sec2}

In this section, we find a background solution to problem
\eqref{E:1.2}\eqref{E:1.3} with constant initial data, and study reflections of waves from the characteristic free-boundary.

\subsection{Wave curves in state space}\label{sec21}
Firstly we consider Riemann problem of \eqref{E:1.2} with initial data:
\begin{equation}\label{E:2.1}
U|_{x=x_0}\doteq(u, v,  p, \rho)^{\top}|_{x=x_0}=
\begin{cases}U_L, \quad y<y_0,\\
             U_R, \quad y>y_0,
\end{cases}
\end{equation}
where $U_L=(u_L, v_L, p_L, \rho_L)^{\top}$ and $U_R=(u_R, v_R, p_R,
\rho_R)^{\top}$ represent the left (lower) and right (upper) states,
respectively. Solvability of the Riemann problem for general strictly
hyperbolic conservation laws with genuinely nonlinear or linearly degenerate characteristics  can be found in  \cite{bressan,HR2015,SJ,Da} when
$|U_L-U_R|$ is sufficiently small. The basic idea is to introduce
several wave curves in the state space, which can also be
used to construct large solutions, as shown below.

For any given left (resp. right) state $U_l$ (resp. $U_r$),  the set
of all possible states $U$ which can be connected to $U_l$ (resp.
$U_r$) on the right (resp. left) by $1$- or $3$-shock wave, is
denoted by $S_1(U_l)$ or $S_3(U_l)$ (resp. $S^{-1}_1(U_r)$ or $S^{-1}_3(U_r)$).
Similarly, we denote $R_1(U_l)$ or $R_3(U_l)$ (resp. $R^{-1}_1(U_r)$ or $R^{-1}_3(U_r)$)
the (inverse) wave curves of $1$- or $3$-rarefaction wave. We can
parameterize $R_j(U_l) \ (j=1,3)$ by
\[
\begin{cases}
\begin{split}
&\frac{\dd u}{\dd\alpha}=-k_j\lambda_j,\\
&\frac{\dd v}{\dd\alpha}=k_j,\\
&\frac{\dd p}{\dd\alpha}=k_j\rho(\lambda_j u-v),\\
&\frac{\dd \rho}{\dd\alpha}=k_j\frac{\rho(\lambda_j u-v)}{c^2},\qquad \alpha\ge0.
\end{split}
\end{cases}
\]
For our use, the $1$-inverse rarefaction wave curve $R^{-1}_1(U_r)$ is
\begin{equation}\label{eq22}
I(q, B)-\theta=I(q_r, B_r)-\theta_r,\quad q^2+\frac{2}{\gamma-1}
c^2=q^2_r+\frac{2}{\gamma-1} c^2_r,\quad
p\rho^{-\gamma}=p_r\rho^{-\gamma}_r,
\end{equation}
where $q\doteq\sqrt{u^2+v^2}$, $B\doteq q^2+\frac{2}{\gamma-1} c^2$,
$\theta\doteq\arctan \frac{v}{u}$, and
\[
\displaystyle I(\bar{q}, B)\doteq\int^{\bar{q}} \frac{\sqrt{q^2-c^2}}{q c} \text{d}q=\int^{\bar{q}}
\frac{\sqrt{(\gamma+1)t^2-(\gamma-1)B}}{t\sqrt{(\gamma-1)(B-t^2)}}
\dd t.
\]
Similarly, the inverse wave curve of 3-rarefaction wave $R^{-1}_3(U_r)$ is given by
\begin{equation}\label{eq23}
I(q, B)+\theta=I(q_r, B_r)+\theta_r,\;\; q^2+\frac{2}{\gamma-1}
c^2=q^2_r+\frac{2}{\gamma-1} c^2_r,\;\;
p\rho^{-\gamma}=p_r\rho^{-\gamma}_r.
\end{equation}
We remark that the above representation of the rarefaction wave curve is not complete. It only contains the part where $u>0.$

The second characteristic field is linearly degenerate. So it only supports characteristic discontinuities.  The corresponding wave curve passing through $U_l$ for vortex sheet (i.e. integral curve of vector field $r_{21}$ in state space $\mathbb{R}^4$)  is \footnote{Note that $e$ appeared below and in the rest of the paper is the base of the natural logarithm function, rather than the internal energy of the gas.}
\[
C_{21}(U_l): u=u_le^{\alpha_{21}}, \; v=v_l e^{\alpha_{21}}, \; p=p_l,\;
\rho=\rho_l,\quad\alpha_{21}\in\mathbb{R};
\]
and the  entropy wave curve (integral curve of $r_{22}$) through $U_l$ is given by
\[
C_{22}(U_l): u=u_l,\; v=v_l,\; p=p_l, \rho=\rho_l
e^{\alpha_{22}},\quad\alpha_{22}\in\mathbb{R}.
\]
The two waves coincide in the physical $(x,y)$-plane, with the same line $\{(x,y): y-y_0=(v_l/u_l)(x-x_0), x>x_0\}$ being their fronts.

The Rankine-Hugoniot jump conditions across the $j$-shock give the wave curves $S^{-1}_j(U_r)$:
\begin{eqnarray}
&&\left[p\right]=\frac{c^2_r}{b}\left[\rho\right],\;\;
[u]=-s_j[v],\quad \rho_r(s_j u_r-v_r)[v]=[p]  \;\;\text{ for } u>c,
j=1, 3,\label{E:2.3*}
\end{eqnarray}
where $[h]=h_r-h$ stands for the jump of a quantity $h$ across a
shock front; $s_j$ is the speed of the shock front:
\begin{eqnarray*}
s_j=\frac{u_rv_r+(-1)^{\sigma(j)}\bar{c}\sqrt{u^2_r+v^2_r-\bar{c}^2}}{u^2_r-\bar{c}^2},\quad
\text{with}\quad \bar{c}^2=\frac{\rho c^2_r}{\rho_r b},\quad
b=\frac{\gamma+1}{2}-\frac{\gamma-1}{2}\frac{\rho}{\rho_r},
\end{eqnarray*}
and $\sigma(1)=1, \sigma(3)=0.$
The entropy inequality means the pressure increases when particles pass a shock front, namely $[p]>0$ for 1-shock and  $[p]<0$ for 3-shock (cf. \cite[p.276]{HR2015}).

A solution to the Riemann problem
\eqref{E:1.2}\eqref{E:2.1} is given by at most four constant
states connected by shocks, characteristic discontinuities, and/or rarefaction waves. Exactly speaking, there exist piecewise $C^2$ curves
$\alpha_j\mapsto\Phi_j(\alpha_j; U)$, $j=1, 3$, and a $C^2$ surface $\alpha_2\doteq(\alpha_{21}, \alpha_{22})\mapsto\Phi_{2}(\alpha_{2}; U)$ (the latter is the point with parameter $\alpha_{22}$ on $C_{22}(U_m)$, while $U_m$ is the point with parameter $\alpha_{21}$ on curve $C_{21}(U)$), such that
\begin{equation}\label{E:4.1}
 \Phi(\alpha_1, \alpha_2, \alpha_3 ;U_L)\doteq\Phi_3\Big(\alpha_3;\Phi_{2}\Big(\alpha_2;
 \Phi_1\big(\alpha_1; U_L)\big)\Big)\Big)=U_R,
\end{equation}
whenever $|U_L-U_R|\ll 1$.

\begin{remark}\label{rm21}
Hereinafter, we denote by $\alpha_i, \beta_i, \gamma_i$ etc. the parameters for the $i$-wave curve/surface, $i=1, 2, 3$, while by their absolute
values the corresponding strengths of the waves.
It should be noted that since the Euler system is not strictly hyperbolic, we introduce the convention that strength of the characteristic discontinuity with parameter $\alpha_2$ is $|\alpha_2|\doteq|\alpha_{21}|+|\alpha_{22}|.$ We also use the parameters to represent the $i$-waves.
\end{remark}

To study reflection of waves on free-boundary, we introduce the notation $U_l=\Psi(\alpha_1, \alpha_2, \alpha_3;U_r)$ to represent the inverse wave curves; that means, the left state $U_l$ and the right state $U_r$ can be connected by 1-wave $\alpha_1$, 2-wave $\alpha_2$ and 3-wave $\alpha_3$. From the above constructions, we have
\begin{eqnarray}\label{E:2.3}
&&\frac{\partial \Psi}{\partial \alpha_i}(\alpha_1, \alpha_2,
\alpha_3;U_r )\big|_{\alpha_1=\alpha_2=\alpha_3=0}=-r_i(U_r),
\qquad i=1,3,\\
&&\frac{\partial \Psi}{\partial \alpha_{2k}}(\alpha_1, \alpha_2,
\alpha_3;U_r )\big|_{\alpha_1=\alpha_2=\alpha_3=0}=-r_{2k}(U_r),
\qquad k=1,2,
\end{eqnarray}
where $\alpha_2=(\alpha_{21}, \alpha_{22})$, and $r_i$, $r_{2k}$ are the right eigenvectors of the system
\eqref{E:1.1}, given by \eqref{eq13}. So $\alpha_i>0$ along
$R^{-1}_i(U_r)$, while $\alpha_i<0$ along $S^{-1}_i(U_r)$.
Particularly, we can parameterize
the 3-inverse rarefaction wave curve $R^{-1}_3(U)$ by solving
\begin{eqnarray}
&&\frac{d \Psi( 0, 0, \sigma;U)}{d\sigma}=-r_{3}(\Psi(0, 0,
\sigma;U)), \label{eq:2.8} \\[3pt]  &&\Psi( 0, 0, 0;U)=U.\label{eq:2.9}
\end{eqnarray}

\subsection{Background solutions}\label{sec22}
We now consider problem \eqref{E:1.2}\eqref{E:1.3}
with uniform upcoming supersonic flow whose pressure is quite large.

\begin{lemma}\label{lem1}
For given constant state $U_{+}$ satisfying $u_{+}>c_{+}$, there is a positive number $p_*$, such that if the
pressure $p_{+}$ is larger than the pressure $\bar{p}$ of the surrounding
static gas, and $\bar{p}>p_*$, then there exists a unique piecewise Lipschitz continuous
solution $(U_{\mathrm{b}}(x, y), g_\mathrm{b}(x))$ of problem
\eqref{E:1.1}\eqref{E:1.3}, with $U_0(y)\equiv U_+$, satisfying  $g_{\rm b}(x)=k_b x$, $u_{\mathrm{b}}(x, y)>c_{\mathrm{b}}(x, y)>0$, and for
$x>0$,
\begin{eqnarray}\label{eq25}
U_{\mathrm{b}}(x,y)=\begin{cases}
U_+,& y\ge k_1x,\\
U_{\rm ba}(\frac{y}{x}), & k_2 x<y<k_1x, \\
U_-, &k_bx<y\le k_2 x.
\end{cases}
\end{eqnarray}
Here $k_b, k_1, k_2$ are constants, while $U_{\rm ba}(\frac{y}{x})$ is a
3-rarefaction wave connecting the constant states $U_+$ and $U_-$,
with  $p_-=\bar{p}$. Furthermore, there exists a positive constant
$C$ such that
\begin{eqnarray}\label{eq26}
\mathrm{TV. }U_\mathrm{b}(x,\cdot)\le C|p_+-p_-|.
\end{eqnarray}
\end{lemma}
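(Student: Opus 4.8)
The plan is to construct the background solution explicitly by exploiting the self-similarity forced by constant data $U_+\equiv U_0$ and the scale-invariance of the steady Euler system under $(x,y)\mapsto(\lambda x,\lambda y)$, and then to verify the Rankine--Hugoniot/characteristic conditions on the free boundary by a fixed-point/implicit-function argument in the pressure variable. First I would note that since $U_+$ has zero vertical velocity, the natural picture is: the uniform state $U_+$ fills the region above a $3$-rarefaction fan centered at the origin $O$; the fan is a centered simple wave $U_{\rm ba}(y/x)$ obtained by integrating the ODE system for $R_3(U_+)$ (equivalently \eqref{eq23}) in the parameter $\alpha\ge 0$; below the fan sits a constant state $U_-$; and below $U_-$, across the straight line $y=k_bx$, lies the static gas $\bar U$. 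The endpoint $U_-$ of the fan is selected by requiring $p_-=\bar p$, which is possible precisely because along $R_3(U_+)$ the pressure decreases monotonically from $p_+$ (this is where genuine nonlinearity of the $3$-family, $\nabla_U\lambda_3\cdot r_3\equiv 1$, and the sign $\dd p/\dd\alpha=k_3\rho(\lambda_3 u-v)<0$ along $R_3$ enter); the threshold $p_*$ is the infimum of pressures reachable along $R_3(U_+)$ before the flow stops being supersonic or before $u>0$ fails, so that for $\bar p>p_*$ the endpoint $U_-$ exists and still satisfies $u_->c_->0$.

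Next I would pin down the geometry. A centered $3$-rarefaction fan occupies an angular sector $k_2x<y<k_1x$ with $k_1=\lambda_3(U_+)$ and $k_2=\lambda_3(U_-)$, where along the fan the self-similar variable $\xi=y/x$ equals $\lambda_3(U_{\rm ba}(\xi))$; existence and monotonicity of this correspondence again follow from genuine nonlinearity. Below the fan, $U=U_-$ is constant in the wedge $k_bx<y<k_2x$. The free boundary $\mathcal C=\{y=g_{\rm b}(x)\}$ must satisfy the two conditions in \eqref{E:1.3}: $p=\bar p$, which holds because $p_-=\bar p$, and $g_{\rm b}'(x)=v_-/u_-$; since $U_-$ is constant, this forces $g_{\rm b}(x)=k_bx$ with $k_b=v_-/u_-$, and $g_{\rm b}(0)=0$ is automatic. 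The remaining consistency check is that this line indeed separates $U_-$ from $\bar U$ as a characteristic discontinuity, i.e.\ that the Rankine--Hugoniot conditions for the linearly degenerate $2$-family hold across it: pressure continuity $p_-=\bar p$ (already arranged) and the tangency/slip condition, which is exactly $g_{\rm b}'=v_-/u_-$; the entropy and density are allowed to jump freely across a $2$-discontinuity, so $\bar\rho$ is then determined (or free) and no further restriction arises. One must also check $k_b<k_2$, i.e.\ the characteristic discontinuity lies below the trailing edge of the fan; this is a geometric inequality $v_-/u_-<\lambda_3(U_-)=\frac{u_-v_-+c_-\sqrt{u_-^2+v_-^2-c_-^2}}{u_-^2-c_-^2}$, which holds whenever $u_->c_->0$ by a direct sign computation.

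Then I would prove \eqref{eq25} is a genuine entropy solution in the sense of Definition \ref{D:1.1}: the weak formulation (i) reduces to checking the classical Euler equations in each open constancy/rarefaction region (trivial for constants; for the centered simple wave it is the standard fact that a rarefaction fan solves the PDE), plus the jump relations along the two rays $y=k_1x$, $y=k_2x$ (no jump in $U$, only in $\nabla U$, for a rarefaction edge) and along $\mathcal C$ (the $2$-family R--H conditions, verified above); the entropy inequality (ii) holds with equality in the smooth regions and across the characteristic discontinuity (entropy is constant along particle paths within the supersonic region since the fan is a $3$-rarefaction, hence isentropic, and no entropy is produced across a characteristic discontinuity), so in fact $\partial_x(\rho uS)+\partial_y(\rho vS)=0$ there. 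Uniqueness among piecewise Lipschitz solutions of this prescribed structure follows because each step — the endpoint $U_-$ on $R_3(U_+)$ with $p_-=\bar p$, the fan, the slope $k_b$ — is uniquely determined. Finally, \eqref{eq26}: for each fixed $x>0$, $U_{\rm b}(x,\cdot)$ is constant outside $[k_bx,k_1x]$ and monotone along the fan in each component (the wave curve is $C^2$ and monotone in $\alpha$), so $\mathrm{TV.}\,U_{\rm b}(x,\cdot)$ equals the total variation of $U$ along the arc of $R_3(U_+)$ from $U_+$ to $U_-$; parameterizing that arc by pressure and using that $\dd U/\dd p$ is bounded on the compact arc (uniformly, since the arc stays in the supersonic region away from $u=c$), gives $\mathrm{TV.}\,U_{\rm b}(x,\cdot)\le C\,|p_+-p_-|$ with $C$ depending only on $U_+$ and the lower bound on $p-p_*$.

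The main obstacle I anticipate is not any single computation but the careful bookkeeping needed to show that the threshold $p_*$ can be chosen so that the \emph{entire} rarefaction arc from $U_+$ down to $p=\bar p$ stays strictly supersonic with $u>0$, and simultaneously that the trailing state $U_-$ still satisfies $v_-/u_-<\lambda_3(U_-)$ so the free boundary fits below the fan; quantifying $p_*$ requires analyzing the ODE for $R_3(U_+)$ (or the integral $I(q,B)$ in \eqref{eq23}) to see how $q^2-c^2$ and $\theta$ evolve as $p$ decreases, and checking that the sonic line $u=c$ is not reached before $p$ drops to $\bar p$. This is a one-dimensional monotonicity analysis, delicate but routine, and it is the place where the hypothesis $\bar p>p_*$ does its work.
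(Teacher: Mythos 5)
Your proposal is correct and follows essentially the same route as the paper: select $U_-$ on $R_3(U_+)$ by $p_-=\bar p$ using constancy of $S$ and $B$ and monotonicity of $p$ along the $3$-rarefaction curve, take $k_1=\lambda_3(U_+)$, $k_2=\lambda_3(U_-)$, $k_b=v_-/u_-=\tan\theta_-$, invert $\xi=\lambda_3$ via genuine nonlinearity for the fan, and define $p_*$ so that the whole arc stays supersonic (the paper does this by writing $M_1=u/c=\chi(p)$ and invoking continuity from $\chi(p_+)>1$, which disposes of the "delicate monotonicity analysis" you flag at the end). Your extra check $k_b<k_2$ is automatic from the eigenvalue ordering $\lambda_2=v/u<\lambda_3$ for $u>c>0$, and your TV argument (parameterize the arc by $p$, bounded $\dd U/\dd p$) matches the paper's appeal to Lemma \ref{l3}.
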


\begin{proof}
From the second and third equations in \eqref{eq23}, the entropy $S$
and Bernoulli constant $B$ are invariant across 3-rarefaction
waves, hence they are the same as those of $U_+$.

Set $M_1=u/c$. It is a function of $q$, namely,
$$M_1=\chi_1(q)\doteq\frac{q\cos\theta(q)}{c(q)},\quad \theta(q)=I(q_+,B_+)-I(q,B_+),\quad c(q)=\sqrt{\frac{2}{\gamma-1}(B_+^2-q^2)}.$$
By Bernoulli law, we also have
$$q=\chi_2(p)\doteq\left[B_+-\frac{\gamma(\gamma-1)}{2}A(S_+)^{\frac{1}{\gamma}}
p^{\frac{\gamma-1}{\gamma}}\right]^\frac12.$$
Hence $M_1=\chi(p)=\chi_1(\chi_2(p)).$ It is obvious that $\chi(p_+)=u_+/c_+>1$. By continuity, there is a number $p_*>0$ so that $M_1=\chi(p)>1$ for $p\in(p_*, p_+].$

Now by requiring
$p_-=\bar{p}>p_*$, and $p_-<p_+$, we determine $q_-=\chi_2(p_-)>q_+$. Using the first equation
in \eqref{eq23}:
$$I(q_+,B_+)=I(q_-,B_+)+\theta_-,$$
we solve $\theta_-$, which is negative, hence $k_b=\tan \theta_-$
and we find the downstream state $U_-$ in \eqref{eq25}.  Then
$k_1=\lambda_3(U_+)$ and $k_2=\lambda_3(U_-)$. For given $\theta\in
(\theta_-,0)$, we solve the speed $q_\sharp$ and the velocity $(u_\sharp,
v_\sharp)=(q_\sharp\cos\theta, q_\sharp\sin\theta)$ from
$$I(q_+,B_+)=I(q_\sharp,B_+)+\theta,$$
and hence determine the state $U_\sharp(\theta)$.
Then $U_{\rm ba}(y/x)=U_\sharp(\theta)$, where we could solve $\theta$ from
$\lambda_3(U_\sharp(\theta))=y/x$, by virtue of the genuine
nonlinearity of $\lambda_3$.  The estimate \eqref{eq26} follows
from monotonicity of $p, \rho, q$ and constancy of $S, B$ (see \eqref{eq22}) along the 3-rarefaction wave curve (cf. Lemma \ref{l3} below). Since the pressure is decreasing along rarefaction waves, one checks easily that $u_->c_-$ by our choice of $p_*$ and $p_-$.
\end{proof}

\begin{remark}
We observe that  no vacuum could appear. This is different from the
piston problem.
\end{remark}

For small $\delta_0>0$ (which is then fixed so that all the Riemann problems are solvable in the sequel), we introduce
the perturbation domain $D(U_+, \delta_0)$ in state space as follows:
\begin{eqnarray}\label{E:3.4}
D(U_+,\delta_{0})=\left\{U\in\mathbb{R}^4:
\begin{array}{l}
\Big|I(q, B)+\theta-I(q_+, B_+)\Big|<\delta_{0},\\ [8pt]
\displaystyle\big|q^2+\frac{2}{\gamma-1}c^2-q^2_+-\frac{2}{\gamma-1}c^2_+|<\delta_0,\\[8pt]
|p\rho^{-\gamma}-p_+\rho^{-\gamma}_+|<\delta_0,\\[5pt]
 \bar{p}-\delta_0<p<p_{+}+\delta_0
\end{array}
\right \},
\end{eqnarray}
which may be considered as a neighborhood in $\mathbb{R}^4$ of the 3-rarefaction wave curve passing through $U_+$, with pressure lying in $[\bar{p}, p_+]$.
The following lemma shows we could use the variation of pressure to measure the strength of large rarefaction waves in $D(U_+, \delta_0)$, just as in \eqref{eq26}. We use $V^{(k)}$ to denote the $k$-th argument of a vector $V$.

\begin{lemma}\label{l3}
There exists a $\delta'_*>0$ such that for any $U_l\in
D(U_+,\delta_{0})$, the function $\Phi_3^{(3)}(\alpha_3; U_l)$ is
strictly increasing with respect to $\alpha_3$ in  $\{\alpha_3 \big|
\alpha_3 \geq-\delta'_{*},\ \Phi_3(\alpha_3;U_l)\in
D(U_+,\delta_{0})\}$. Moreover, there exist two positive constants
$C_1$ and $C_2$ such that
\begin{eqnarray}\label{E:5.1}
C_{1}|\alpha_3|\leq| \Phi_{3}^{(3)}(\alpha_3; U_l)- p_l|\leq
C_{2}|\alpha_3|.
\end{eqnarray}
\end{lemma}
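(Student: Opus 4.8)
The plan is to exploit the explicit ODE parametrization of the 3-rarefaction wave curve $R_3^{-1}(U_l)$ given by \eqref{eq:2.8}--\eqref{eq:2.9}, together with the normalization of $r_3$ that forces genuine nonlinearity. First I would observe that from \eqref{E:2.3}, $\frac{\partial \Psi^{(3)}}{\partial\alpha_3}(0,0,\alpha_3;U_l) = -r_3^{(3)}(\Psi(0,0,\alpha_3;U_l))$, where the third component of $r_3$ is $k_3\rho(\lambda_3 u - v)$. Since we are on a 3-rarefaction wave curve, recall from \S\ref{sec21} that the parametrization $\alpha_3\mapsto\Phi_3(\alpha_3;U_l)$ agrees (up to a sign reversal in the parameter, because $\Psi$ uses $-r_3$) with the $R_3$ curve parametrized by $\frac{\dd p}{\dd\alpha}=k_3\rho(\lambda_3 u - v)$. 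So the claim reduces to showing that $k_3\rho(\lambda_3 u - v)$ has a definite sign, bounded away from $0$ and from $\infty$, uniformly for all states in $D(U_+,\delta_0)$.

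For the sign: along a 3-rarefaction the pressure decreases (entropy inequality / the physical orientation of rarefaction waves), so $\Phi_3^{(3)}$ should be increasing in $\alpha_3$ on the physical branch; I would pin down the sign of $\lambda_3 u - v$ directly from the formula $\lambda_3 = \frac{uv + c\sqrt{u^2+v^2-c^2}}{u^2-c^2}$, giving $\lambda_3 u - v = \frac{u^2 v + uc\sqrt{u^2+v^2-c^2} - v(u^2-c^2)}{u^2-c^2} = \frac{c(uc\cdot\frac{\sqrt{u^2+v^2-c^2}}{c}\cdots)}{u^2-c^2}$; a short computation shows $\lambda_3 u - v = \frac{c(u\sqrt{u^2+v^2-c^2} + vc)}{u^2-c^2}>0$ whenever $u>c>0$ (the supersonic cone condition), hence $r_3^{(3)}>0$ and, taking the sign of $k_3>0$ into account, $\frac{\dd p}{\dd\alpha_3}>0$ strictly. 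This gives strict monotonicity of $\Phi_3^{(3)}$. For the quantitative two-sided bound \eqref{E:5.1}, I would note that the map $U\mapsto k_3(U)\rho(\lambda_3 u - v)$ is continuous and strictly positive on the closure of $D(U_+,\delta_0)$, which is a bounded set bounded away from the sonic surface $u=c$ (all states there satisfy $p\ge\bar p-\delta_0>0$ and $u>c>0$, and for $\delta_0$ small this is uniform by Lemma \ref{lem1}); hence it attains a positive minimum $C_1$ and finite maximum $C_2$ there. Then for $\alpha_3\ge-\delta'_*$ with $\Phi_3(\alpha_3;U_l)$ remaining in $D(U_+,\delta_0)$, integrating $\frac{\dd}{\dd s}\Phi_3^{(3)}(s;U_l)=k_3\rho(\lambda_3 u-v)\big|_{\Phi_3(s;U_l)}\in[C_1,C_2]$ from $0$ to $\alpha_3$ yields $C_1|\alpha_3|\le|\Phi_3^{(3)}(\alpha_3;U_l)-p_l|\le C_2|\alpha_3|$.

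The one genuinely delicate point is the choice of $\delta'_*>0$: for negative $\alpha_3$ (the physically relevant direction here, since the strong rarefaction wants $p$ to decrease, but the inverse-curve sign convention flips this — care with signs is needed) one must guarantee that the solution of the ODE stays in $D(U_+,\delta_0)$ long enough and, more importantly, that $u>c$ persists so that $\lambda_3$ and $r_3$ remain well-defined and the lower bound $C_1$ does not degenerate. I would handle this exactly as in the proof of Lemma \ref{lem1}: along the rarefaction $S$ and $B$ are conserved, so $u/c=\chi(p)$ is a fixed function of $p$ alone, and $\chi(p)>1$ on $(p_*,p_+]$; choosing $\delta_0$ (and then $\delta'_*$) small enough that all relevant pressures stay in a compact subinterval of $(p_*,p_+]$ keeps $u-c$ bounded below. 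I expect this uniformity-over-$D(U_+,\delta_0)$ bookkeeping, rather than any single computation, to be the main obstacle; once it is in place, \eqref{E:5.1} is immediate from the mean value theorem applied to $\alpha_3\mapsto\Phi_3^{(3)}(\alpha_3;U_l)$.
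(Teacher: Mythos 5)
Your proposal is correct and follows essentially the same route as the paper's proof: differentiate $\Phi_3^{(3)}$ along the wave curve to get $(r_3)^{(3)}=k_3\rho(\lambda_3 u-v)$, verify this is positive (your computation $\lambda_3u-v=\frac{c(u\sqrt{q^2-c^2}+vc)}{u^2-c^2}>0$ for $u>c>0$ is the right one, matching the appendix identity $u^2(q^2-c^2)-v^2c^2=(u^2-c^2)q^2$), bound it above and below on $D(U_+,\delta_0)$, and conclude \eqref{E:5.1} by the mean value theorem. One correction to your ``delicate point'': the real issue for $\alpha_3<0$ is not that the ODE solution might leave the supersonic region, but that $\Phi_3(\alpha_3;U_l)$ is then the shock (Hugoniot) branch, on which the derivative is \emph{not} given by $r_3^{(3)}$ pointwise; the paper gets positivity on $[-\delta'_*,0)$ from the continuous differentiability of the composite wave curve at $\alpha_3=0$ (the standard $C^2$ tangency of the shock and rarefaction branches), which is also what your integration step implicitly needs there.
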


\begin{proof}
For $\alpha_3\ge0$,  by the properties of the rarefaction waves, we
have
\begin{eqnarray*}
\begin{split}
\frac{\dd
}{\dd\alpha_{3}}\Phi_{3}^{(3)}(\alpha_3; U_l)
&=\left.\Big(r_{3}\Big)^{(3)}\right|_{U=\Phi_{3}(\alpha_3; U_l)}=k_3\rho u(\lambda_3(U)-\frac{v}{u})|_{U=\Phi_3(\alpha_3; U_l)}>0\\[5pt]
\end{split}
\end{eqnarray*}
for any $U_l$ and $\Phi_{3}(\alpha_3; U_l)\in D(U_+,\delta_{0})$. So by continuous differentiability of the curve $\Phi_{3}(\alpha_3; U_l)$, there is a $\delta'_*>0$ so that for $-\delta'_*\leq \alpha_3<0$, we still know the derivative calculated above is positive.
The estimate \eqref{E:5.1} follows from the mean value theorem and inverse function theorem of differentiable  monotonic functions.
\end{proof}

The following lemma shows that strength of the rarefaction wave in the background solution is Lipschitz continuous with respect to $U_+$.

\begin{lemma}\label{lem23}
Let $U_r\in D(U_+,\delta_0)$.  Suppose that $$\Psi^{(3)}(0,0,\underline{S}; U_+)=\bar{p}=\Psi^{(3)}(0,0,S; U_r).$$ Then there is a constant $C$ depending only on $\delta_0$ so that
\begin{eqnarray}\label{eq213}
|S-\underline{S}|\le C|U_r-U_+|.
\end{eqnarray}
\end{lemma}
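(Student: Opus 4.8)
The plan is to read \eqref{eq213} off directly from the mean value theorem: the pressure along the inverse $3$-rarefaction curve, viewed as a function of the wave parameter, is strictly monotone with derivative bounded away from zero, and it depends in a $C^1$ fashion on the base point of that curve. Once this is in place the argument is essentially the one already used for Lemma~\ref{l3}.

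First I would set $F(\sigma,U)\doteq\Psi^{(3)}(0,0,\sigma;U)$, the pressure of the point with parameter $\sigma$ on the inverse $3$-rarefaction curve issuing from $U$. By \eqref{eq:2.8}--\eqref{eq:2.9} and the form of $r_3$ in \eqref{eq13},
\[
\frac{\partial F}{\partial\sigma}(\sigma,U)=-\big(r_3\big)^{(3)}\Big|_{\Psi(0,0,\sigma;U)}=-k_3\rho(\lambda_3 u-v)\Big|_{\Psi(0,0,\sigma;U)},
\]
which, exactly as computed in the proof of Lemma~\ref{l3}, is strictly negative. Next I would observe that whenever $U\in D(U_+,\delta_0)$ the arc of $\sigma\mapsto\Psi(0,0,\sigma;U)$ joining $U$ to pressure $\bar p$ stays in $D(U_+,\delta_0)$: along it the three quantities $I(q,B)+\theta$, $B$ and $p\rho^{-\gamma}$ are constant (by \eqref{eq23}), hence stay within $\delta_0$ of their values at $U_+$, while the pressure moves monotonically between the pressure of $U$ and $\bar p$, both lying in $(\bar p-\delta_0,p_+ +\delta_0)$. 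Consequently $F$ is well-defined and of class $C^2$ on these arcs, and since $\overline{D(U_+,\delta_0)}$ is compact and consists of supersonic states ($u>c>0$) for $\delta_0$ small, there are constants $0<C_1\le C_2$ and $C_3>0$ with $C_1\le|\partial_\sigma F|\le C_2$ and $|\nabla_U F|\le C_3$ there; in particular $|S|$ and $|\underline{S}|$ are bounded by $(p_+ +\delta_0-\bar p)/C_1$.

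The estimate is then immediate. Using the hypothesis $F(\underline{S},U_+)=\bar p=F(S,U_r)$, write
\[
0=F(S,U_r)-F(\underline{S},U_+)=\big[F(S,U_r)-F(S,U_+)\big]+\big[F(S,U_+)-F(\underline{S},U_+)\big].
\]
The first bracket has absolute value at most $C_3|U_r-U_+|$, since $U_+,U_r\in D(U_+,\delta_0)$ and $|S|$ is uniformly bounded; for the second, the mean value theorem applied to $\sigma\mapsto F(\sigma,U_+)$ yields some $\xi$ between $\underline{S}$ and $S$ with $F(S,U_+)-F(\underline{S},U_+)=\partial_\sigma F(\xi,U_+)(S-\underline{S})$ and $|\partial_\sigma F(\xi,U_+)|\ge C_1$. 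Combining, $C_1|S-\underline{S}|\le C_3|U_r-U_+|$, i.e.\ \eqref{eq213} holds with $C=C_3/C_1$, which depends only on $\delta_0$.

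The step I expect to require the most care is not the final estimate but the structural bookkeeping of the second paragraph: verifying that the relevant arcs of the inverse $3$-rarefaction curves do not leave $D(U_+,\delta_0)$, so that the Lemma~\ref{l3}-type lower bound $|\partial_\sigma F|\ge C_1$ genuinely applies along them. This rests on the pressure being monotone along such an arc — hence unable to overshoot the pressure bounds in \eqref{E:3.4} — together with the fact that the remaining three coordinates are automatically controlled by constancy of the Riemann invariants in \eqref{eq23}; the only minor subtlety is that if the pressure of $U$ already lies below $\bar p$ the arc reaches slightly onto the $3$-shock side (so $S<0$), but then $|S|<\delta_0/C_1<\delta'_*$ for $\delta_0$ small, so the bound of Lemma~\ref{l3} still holds. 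Everything else is the mean value theorem and the inverse function theorem, precisely as in Lemma~\ref{l3}.
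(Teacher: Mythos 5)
Your argument is correct and follows essentially the same route as the paper's proof: both rest on the $C^1$ dependence of $\Psi^{(3)}(0,0,\sigma;U)$ on $(\sigma,U)$, the strict negativity of its $\sigma$-derivative, and the mean value theorem. The paper packages this by inverting to a $C^1$ function $\alpha=h(p,U)$ and applying the mean value theorem in $U$, whereas you decompose $F(S,U_r)-F(\underline{S},U_+)$ directly into a $U$-increment and a $\sigma$-increment; this is only a cosmetic difference (your extra bookkeeping that the arcs stay in $D(U_+,\delta_0)$ is a welcome, if implicit in the paper, verification).
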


\begin{proof} Let $f(\alpha; U)\doteq\Psi^{(3)}(0,0,\alpha; U)$.
By the parametrization \eqref{eq:2.8}\eqref{eq:2.9}, thanks to continuous differentiability of solutions of ordinary differential equations with respect to parameters, we infer that $f$ is a $C^1$ function of $(\alpha, U)$, and
\[
\frac{\text{d}f(\alpha; U)}{\text{d}\alpha}=-r_3^{(3)}(\Psi(0,0,\alpha; U))<0.
\]
Thus, by the inverse function theorem of differentiable monotonic functions, there exists a $C^1$ function $h$ such that
\[
\alpha=h(p, U_r).
\]
Since $\bar{p}=f(\underline{S}; U_{+})$, $\bar{p}=f(S; U_r)$, then
\[
\underline{S}=h(\bar{p}, U_{+}), \qquad S=h(\bar{p}; U_r),
\]
and mean value theorem implies \eqref{eq213}.
\end{proof}
\begin{center}
\begin{figure}\label{fig2}
\begin{tikzpicture}[scale=1]
 \draw[black][->](0, 0)--(3, 2);
  \draw[black][->](3, 2)--(5, 4);
 \draw[thick][->](1, 3)--(3, 2);
 \draw[thick][->](3, 2)--(4, 4.2);
 \draw[dashed](3, -0.1)--(3, 4);
\node[above] at (0.8, 3){$\alpha_1$}; \node[above right] at (4,
4.2){$\beta_3$};
 \node at (1.5, 2) {$U_{l}$};
 \node at (2.5, 3.3)
{$U_{r}$}; \node at (4.1, 3.6){$U_m$}; \node [right]at (5.2,
4){$y=g(x)$};
\end{tikzpicture}
\caption[]{\small  Reflection on the free-boundary. }
\end{figure}
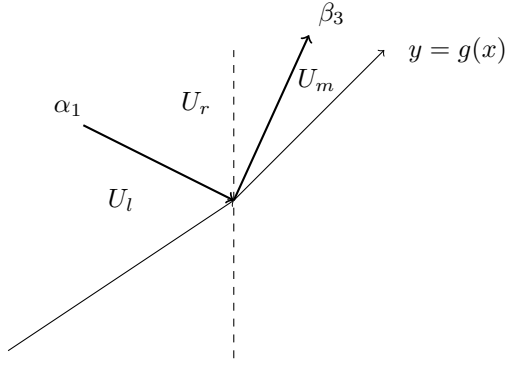
\end{center}
\subsection{Reflection of waves on the  characteristic free-boundary}\label{S:4.3}
Assume that a 1-wave front $\alpha_1$ hits the transonic free-boundary at
$(x_0,y_0)$ (cf. \S \ref{S:4.1} below that a rarefaction wave is replaced
by several fronts in the $(x,y)$-plane, and we need only consider one
front meets the free-boundary here, as shown in Figure \ref{fig2}). Let $U_l$ and $U_r$ be the left and right
states of $\alpha_1$, respectively. The reflected wave is a $3$-wave,
denoted by $\beta_3$.  Suppose the left and right states of $\beta_3$ are $U_{m}$ and $U_r$,
respectively.
Consider the free-boundary Riemann problem
\begin{equation}\label{eq:3.10}
\begin{cases}
\eqref{E:1.1}&\text{ in }x>x_0, \ \ y>k(x-x_0)+y_0,\\
U=U_r &\text{ on } x=x_0,\ \  y>y_0,\\
p=\bar{p} & \text{ on } x>x_0,\ \  y=k(x-x_0)+y_0,
\end{cases}
\end{equation}
where $k$ is a constant to be solved.

\begin{lemma}\label{l:4.3}
There exist positive constants $\varepsilon_0, M$, such that if $U_l, U_r\in B(U_-,\varepsilon)$ (the ball with center $U_-$ and radius $\varepsilon\le \varepsilon_0$ in $\mathbb{R}^4$), and  $U_l=\Psi(\alpha_1, 0,0;U_{r})$, then problem \eqref{eq:3.10} admits a
unique simple wave solution $(U_m, y=k(x-x_0)+y_0)$ for $x>x_0$, satisfying $U_m=\Psi(0,0,\beta_3;U_{r})$,  $k=v_m/u_m$, and
\begin{eqnarray}
\beta_3=-K_{b_1}\alpha_1,\quad |U_m-U_-|\le M\varepsilon,
\end{eqnarray}
where $K_{b_1}$ is positive and uniformly bounded,
with a bound $C_b$ depending only on the background solution and $\delta_0$.
\end{lemma}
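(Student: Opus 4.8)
The plan is to reduce Lemma~\ref{l:4.3} to an application of the implicit function theorem around the background reflection, exactly as one does for small-data Riemann problems, but keeping track of the fact that the base point is the constant state $U_-$ (not a trivial state) and that the boundary condition $p=\bar p$ is one scalar equation. First I would set up the map whose zero encodes the problem: given the incoming left state $U_l$, the unknowns are the reflected strength $\beta_3$ and the intermediate state $U_m$, subject to $U_m=\Psi(0,0,\beta_3;U_r)$ together with the free-boundary relation $\Psi^{(3)}(0,0,\beta_3;U_r)=\bar p$ and $k=v_m/u_m$. Since $U_r$ and $U_l$ are tied by $U_l=\Psi(\alpha_1,0,0;U_r)$, everything can be written as a function $F(\beta_3,\alpha_1;U_r)\doteq \Psi^{(3)}\bigl(0,0,\beta_3;U_r\bigr)-\bar p$, and I want to solve $F=0$ for $\beta_3$ in terms of $(\alpha_1,U_r)$ near the background configuration.

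Second, I would check the hypotheses of the implicit function theorem at the background point. At $\alpha_1=0$, $\beta_3=0$, $U_r=U_-$ one has $\Psi^{(3)}(0,0,0;U_-)=p_-=\bar p$, so $F=0$ there. The key derivative is $\partial_{\beta_3}F = \partial_{\beta_3}\Psi^{(3)}(0,0,\beta_3;U_r)$, which by \eqref{E:2.3} equals $-r_3^{(3)}(U_-)$ at the base point; by the computation in the proof of Lemma~\ref{l3} (and Lemma~\ref{lem23}) this is $-k_3\rho u(\lambda_3-v/u)|_{U_-}<0$, hence nonzero. Therefore the implicit function theorem yields a $C^1$ function $\beta_3=\beta_3(\alpha_1,U_r)$ defined for $(\alpha_1,U_r)$ in a neighborhood of $(0,U_-)$, with $\beta_3(0,U_-)=0$; then $U_m\doteq\Psi(0,0,\beta_3;U_r)$ and $k\doteq v_m/u_m$ are determined and $C^1$ as well, and the estimate $|U_m-U_-|\le M\varepsilon$ follows by Lipschitz continuity of $U_m$ in $(\alpha_1,U_r)$ together with $|\alpha_1|+|U_r-U_-|\lesssim\varepsilon$ (note $\alpha_1$ is itself controlled by $|U_l-U_r|$, hence by $\varepsilon$).

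Third, to obtain the sharp linear relation $\beta_3=-K_{b_1}\alpha_1$ with a uniformly bounded positive coefficient, I would differentiate the identity $F(\beta_3(\alpha_1,U_r),\alpha_1;U_r)\equiv 0$ in $\alpha_1$, giving
\[
\frac{\partial\beta_3}{\partial\alpha_1}
= -\,\frac{\partial_{\alpha_1}\Psi^{(3)}(\alpha_1,0,0;U_r)\big|_{\text{via }U_l}}{\partial_{\beta_3}\Psi^{(3)}(0,0,\beta_3;U_r)}.
\]
Here I must be slightly careful: the dependence of $F$ on $\alpha_1$ enters only through $U_r$ being held fixed while $U_l$ varies, so in fact the cleaner route is to parametrize by $U_r$ fixed and regard $U_l=\Psi(\alpha_1,0,0;U_r)$ as the actual datum; then the boundary condition to be matched is $p$ on the reflected side, and the chain differentiation uses $\partial_{\alpha_1}\Psi^{(3)}|_0=-r_1^{(3)}(U_r)$ and $\partial_{\beta_3}\Psi^{(3)}|_0=-r_3^{(3)}(U_r)$ from \eqref{E:2.3}. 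This gives $K_{b_1}=r_1^{(3)}(U_-)/r_3^{(3)}(U_-)+O(\varepsilon)$, which is positive because both $r_1^{(3)}$ and $r_3^{(3)}$ have the sign of $\rho(\lambda_j u-v)$ at $U_-$ (supersonic, with $u_->c_->0$), and uniformly bounded with a bound $C_b$ depending only on $U_-$ — i.e.\ on the background solution — and on $\delta_0$ (which fixes the size of the neighborhood where the derivatives are controlled). Then $\beta_3=-K_{b_1}\alpha_1$ holds by the mean value theorem along the segment from $\alpha_1=0$, after absorbing the $O(\varepsilon)$ error into the definition of $K_{b_1}$ (or stating the identity as one with an $\alpha_1$-dependent but uniformly bounded coefficient).

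I expect the main obstacle to be bookkeeping rather than a deep difficulty: namely, pinning down exactly which variables are independent in the free-boundary Riemann problem \eqref{eq:3.10} — the reflected 3-wave and the boundary slope $k$ are \emph{both} unknowns, so one must verify that the pair of scalar conditions ``$U_m$ lies on $R_3^{-1}(U_r)$-type curve'' and ``$p_m=\bar p$'' together with $k=v_m/u_m$ really does form a well-posed $1\times1$ system after the structural reductions, and that the non-degeneracy $r_3^{(3)}(U_-)\ne0$ is precisely what makes it solvable. A secondary technical point is to make sure the neighborhood radius $\varepsilon_0$ and the constant $M$ can be chosen uniformly over $U_l,U_r\in B(U_-,\varepsilon)$; this is standard once one notes all the relevant maps are $C^1$ on a fixed compact neighborhood of $U_-$ inside $D(U_+,\delta_0)$, with derivative bounds depending only on that neighborhood. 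No genuinely hard estimate is needed here — the lemma is the free-boundary analogue of the classical reflection-coefficient computation, and the genuine nonlinearity/linear-degeneracy structure recorded after \eqref{eq13}, plus Lemmas~\ref{l3} and~\ref{lem23}, supply everything required.
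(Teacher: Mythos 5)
Your overall strategy is exactly the paper's: reduce the free-boundary Riemann problem to the single scalar equation $\Psi^{(3)}(0,0,\beta_3;U_r)=\bar p$, invoke the implicit function theorem using $\partial_{\beta_3}\Psi^{(3)}\big|_0=-r_3^{(3)}(U_r)\neq0$, differentiate to extract the reflection coefficient, and get the bounds by continuity on a fixed neighborhood of $U_-$. That part is fine, and your remark about which variables are genuinely independent (that $\alpha_1$ enters only through the identity $\Psi^{(3)}(\alpha_1,0,0;U_r)=\bar p$ satisfied by the incoming configuration) is a correct reading of what the paper does.

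However, the sign analysis at the end — which is the one nontrivial assertion of the lemma, since positivity of $K_{b_1}$ is what makes a reflected rarefaction a shock and vice versa (Remark \ref{rm22}) — is wrong on two counts that do not cancel. First, differentiating $\Psi^{(3)}(0,0,\beta_3(\alpha_1);U_r)-\Psi^{(3)}(\alpha_1,0,0;U_r)=0$ gives $\partial\beta_3/\partial\alpha_1=\partial_{\alpha_1}\Psi^{(3)}/\partial_{\beta_3}\Psi^{(3)}=r_1^{(3)}(U_r)/r_3^{(3)}(U_r)$ with \emph{no} extra minus sign (your displayed formula inserts the IFT minus sign on top of the minus already carried by $-\partial_{\alpha_1}\Psi^{(3)}$ inside $F$); hence, since $\beta_3=-K_{b_1}\alpha_1$, the correct coefficient is $K_{b_1}=-r_1^{(3)}(U_-)/r_3^{(3)}(U_-)$, not $+r_1^{(3)}/r_3^{(3)}$. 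Second, and more importantly, your justification of positivity — that "both $r_1^{(3)}$ and $r_3^{(3)}$ have the sign of $\rho(\lambda_j u-v)$" — tacitly assumes these two quantities have the \emph{same} sign, which is false: in the supersonic region $u>c$ one computes $\lambda_1 u-v=c\bigl(vc-u\sqrt{q^2-c^2}\bigr)/(u^2-c^2)<0$ while $\lambda_3 u-v=c\bigl(vc+u\sqrt{q^2-c^2}\bigr)/(u^2-c^2)>0$ (using $u^2(q^2-c^2)>v^2c^2$), and $k_1,k_3>0$ by the Appendix computation \eqref{A1}. Thus $r_1^{(3)}/r_3^{(3)}<0$, and it is precisely the minus sign in $K_{b_1}=-r_1^{(3)}/r_3^{(3)}$ together with the \emph{opposite} signs of $r_1^{(3)}$ and $r_3^{(3)}$ that yields $K_{b_1}>0$. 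As written, your argument would prove $K_{b_1}<0$ once the derivative formula is corrected, so the sign determination needs to be redone along the lines above; the paper explicitly flags that the explicit expressions of $k_1,k_3$ are needed at this step.
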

We recall that $\delta_0$ appeared in \eqref{E:3.4}, which is small and fixed.

\begin{proof}
By the boundary conditions on the free-boundary,  we
have
\begin{equation}\label{E:4.1}
\Psi^{(3)}( 0, 0, \beta_3;U_r)=\Psi^{(3)}(\alpha_1, 0, 0;U_r)=\bar{p}.
\end{equation}
Differentiate \eqref{E:4.1} with respect to $\beta_3$,
and let $\beta_3=0$ (hence $\alpha_1=0$), we see
\[
\frac{\partial \Psi^{(3)}}{\partial \beta_3}|_{U=U_r,
\beta_3=0}=-(r_{3})^{(3)}(U_r)\neq 0.
\]
By the implicit function theorem, close to $\alpha_1=0$, there
exists a function $f_1\in C^1$ such that
\begin{equation}\label{E:4.2}
\beta_3=f_1(\alpha_1),\qquad f_1(0)=0.
\end{equation}
Hence $U_m$ is solved by
\[
U_m=\Psi(0, 0, \beta_3;U_r).
\]
Now consider the left-hand side of \eqref{E:4.1} as a function of $\alpha_1$, and differentiate it with respect to $\alpha_1$, we have
\[
\frac{\partial \Psi^{(3)}}{\partial \beta_3}\frac{\partial
\beta_3}{\partial \alpha_1}-\frac{\partial \Psi^{(3)}}{\partial
\alpha_1}=0.
\]
Let $\alpha_1=0$. Then
\[
\frac{\partial \beta_3}{\partial
\alpha_1}|_{U=U_{r},\alpha_1=0}=\frac{-r_{1}^{(3)}(U_{r})}{-r_{3}^{(3)}(U_{r})}=
\frac{k_1(\lambda_1(U)-\frac{v}{u})}{k_3(\lambda_3(U)-\frac{v}{u})}|_{U=U_{r}}.
\]
From Taylor's  formula, we have
\[
\beta_3=f_1(\alpha_1)-f_1(0)=-K_{b_1}\alpha_1,
\]
and
$$
K_{b_1}|_{\alpha_1=0, U_r=U_{-}}=-\frac{k_1(\lambda_1(U)-\frac{v}{u})}{k_3(\lambda_3(U)-\frac{v}{u})}|_{U=U_{-}}>0.
$$
The explicit expressions of $k_1$ and $k_3$ in \eqref{A1} were used here to determine the sign.
Since $K_{b_1}$ is continuous with respect to $\alpha_1$ and $U_r$, for $\varepsilon_0>0$ small,
if $|\alpha_1|<\varepsilon_0$ and $|U_r-U_-|<\varepsilon_0$, we still have
\[
\frac12K_{b_1}|_{\alpha_1=0,U_r=U_-}<K_{b_1}(\alpha_1, U_r)<\frac32K_{b_1}|_{\alpha_1=0,U_r=U_-}.
\]
Thus, $K_{b_1}$ is positive and bounded by a constant $C_b$ depending only on the background solution and $\delta_0$.

Since $\Psi$ is $C^1$, there are constants $C'_1, C_2$  such that
$$|U_m-U_r|\le C'_1|\beta_3|\le C_bC'_1|\alpha_1|<C_2|U_l-U_r|<2C_2\varepsilon.$$
So $|U_m-U_-|<M\varepsilon$, with $M=2C_2+1.$
\end{proof}

\begin{remark}\label{rm22}
This lemma says that  $1$-wave front is changed to $3$-wave
after reflection, and rarefaction front becomes shock front, and vice versa. It shows how to solve the free-boundary as ``times" evolves. We remark that $U_{m}\in B(U_-,M\varepsilon)$ implies that $|k-k_\mathrm{b}|\le M_0\varepsilon$, for a constant $M_0$ depending only on the background solution,  by noting that the velocity $u$ has a positive lower bound in $D(U_+,\delta_0)$, and using the mean value theorem for continuously differentiable functions. This fact finally leads to \eqref{freeest} claimed in Theorem \ref{T:1.1}.
\end{remark}

\section{Construction of  Approximate Solutions}
\label{E:4}

In this section, following the general ideas presented in \cite{bressan,HR2015}, we define accurate and simplified Riemann solvers, which are both appropriate modifications of the exact solutions of the Riemann problems indicated in \S \ref{sec2}. They are
building blocks to construct approximate solutions of problem
\eqref{E:1.2}\eqref{E:1.3} by a wave front tracking algorithm.

\subsection{Riemann solvers}\label{S:4.1}

The definitions given below are standard, see \cite[p.129]{bressan} or \cite[p.286]{HR2015}.

\vspace{0.2cm}
\underline{Case 1. Accurate Riemann solver}

\vspace{0.1cm}

Let $\delta>0$ be given, which measures accuracy of  each approximate solution constructed by front tracking later. The accurate Riemann solver is as mentioned in \S \ref{sec2}, except that every
rarefaction wave $R_i$, $i=1, 3$, with strength $\alpha_i>0$,  has been divided into $\nu$ equal
parts and replaced by $\nu$ rarefaction fronts. Here $\nu$ is the smallest positive integer larger than or equal to $\alpha_i/\delta$.

More precisely, suppose that the left state $U_L$ and the right state $U_R$ are connected by a $1$-rarefaction wave $\alpha_1$.  Let $U_{0,0}=U_{L}$, $U_{0,\nu}=U_{R}$, and for any $1\leq k\leq\nu$, set
\begin{eqnarray*}
U_{0, k}=\Phi_{1}\Big(\frac{1}{\nu}\alpha_{1}; U_{0, k-1}\Big),\quad
y_{1,k}=y_{0}+(x-x_{0})\lambda_{1}(U_{0, k}),\quad x>x_0.
\end{eqnarray*}
Then we replace the $1$-rarefaction wave by piecewise constant states for $x>x_0$:
\begin{eqnarray}\label{eq:3.2}
U^{\delta}_{A}(U_L,U_R)=\left\{
\begin{array}{lllll}
      U_{L}, \  &y<y_{1,1},\\[3pt]
      U_{0,k}, \  &y_{1,k}<y<y_{1,k+1},\quad k=1,\ldots,\nu-1, \\[3pt]
      U_{R},\  &y_{1,\nu}<y<y_{0}+(x-x_{0})\lambda^{*}_{1},
\end{array}
 \right.
\end{eqnarray}
where $\lambda^{*}_{1}\in(\max_{U\in D(U_+,\delta_0)}\lambda_{1}(U), \min_{U\in D(U_+,\delta_0)}\lambda_2(U))$ is a fixed number.
Similarly, we can approximate 3-rarefaction wave by
$\nu$ 3-rarefaction fronts  in the domain $\{(x, y): x>x_0,
y>y_0+\lambda^*_3(x-x_0)\}$, with $$\lambda^*_3\in(\max_{U\in D(U_+,\delta_0)} \lambda_2(U), \min_{U\in D(U_+,\delta_0)}
\lambda_3(U)).$$

\vspace{0.1cm}

\underline{Case 2. Simplified Riemann solver}

\vspace{0.1cm}

Let $\hat{\lambda}$ (strictly larger than all the eigenvalues of system \eqref{E:1.2} in $D(U_+,\delta_0)$)  be the speed of artificial discontinuities called non-physical fronts,
which are introduced so that the total number of wave fronts (discontinuities) is finite for all
$x\geq 0$ for a given approximate solution of problem \eqref{E:1.2}\eqref{E:1.3}.  The strength of a non-physical wave is the error due
to the following simplified Riemann solver. It occurs in three
cases:

\underline{Case a.} A $j$-wave $\beta_j$ and an $i$-wave $\alpha_i$
interact at $(x_0, y_0)$, $1\leq i\leq j\leq 3$. Suppose that $U_L$,
$U_M$ and $U_R$ are three constant states, satisfying
\begin{eqnarray}
U_{M}=\Phi_{j}(\beta_j; U_{L}),\ U_{R}=\Phi_{i}(\alpha_i; U_{M}).
\end{eqnarray}
We define an auxiliary right state
\begin{eqnarray}
U'_{R}=\left\{
\begin{array}{lll}
      \Phi_{j}(\beta_j;\Phi_{i}(\alpha_i; U_{L}) ), \ &j>i,\\[8pt]
      \Phi_{j}(\alpha_j+\beta_j; U_{L}), \ &j=i.
\end{array}
 \right.
\end{eqnarray}
The simplified Riemann solver $U_S(U_L, U_R)$ at $(x_0, y_0)$ of
problem \eqref{E:1.2}\eqref{E:2.1} is
\begin{eqnarray}
U_{S}(U_{L},  U_{R})=\left\{
\begin{array}{ll}
     U^{\delta}_{A}(U_{L},U'_{R} ), \ &y-y_{0}<\hat{\lambda}(x-x_{0}),\ \ x>x_0,\\[5pt]
      U_{R}, \ &y-y_{0}>\hat{\lambda}(x-x_{0}),\ \ x>x_0,
\end{array}
 \right.
\end{eqnarray}
where $\displaystyle  U^{\delta}_{A}(U_{L},U'_{R} ) $ is constructed by the accurate Riemann solver as in Case 1 (the rarefaction waves are split, while all  shocks and characteristic discontinuities remain unchanged).
The non-physical front is defined by
\begin{eqnarray}
U_{np}=\left\{
\begin{array}{ll}
     U'_{R}, \ &y-y_{0}<\hat{\lambda}(x-x_{0}),\ \ x>x_0,\\[5pt]
      U_{R}, \ &y-y_{0}>\hat{\lambda}(x-x_{0}),\ \ x>x_0,
\end{array}
 \right.
\end{eqnarray}
and the strength of the non-physical front is $\epsilon=|U_R-U'_R|$.

\vspace{0.2cm} \underline{Case b.} A non-physical front interacts
with an $i$-wave front $\alpha_i$ ($i=1, 2, 3$) coming from the
above/right at $(x_0, y_0)$. Suppose that the three states $U_L$, $U_M$
and $U_R$ satisfy
\begin{eqnarray*}
|U_{M}-U_{L}|=\epsilon,\quad  U_{R}=\Phi_{i}(\alpha_i; U_{M}).
\end{eqnarray*}
Then the simplified Riemann solver $U_{S}(U_{L},  U_{R})$ of problem \eqref{E:1.2}\eqref{E:2.1} is
\begin{eqnarray*}
U_{S}(U_{L}, U_{R})=\left\{
\begin{array}{lll}
    U_A^\delta(U_L, \Phi_{i}(\alpha_i; U_{L})),\ & y-y_{0}<\hat{\lambda}(x-x_{0}), \\[5pt]
      U_{R},\ &y-y_{0}>\hat{\lambda}(x-x_{0}),
\end{array}
 \right.
\end{eqnarray*}
where $x>x_0.$ This means that strength of the physical front is unchanged after interaction, namely still to be $\alpha_i$, while strength of the non-physical front becomes to be $\epsilon'=|\Phi_{i}(\alpha_i; U_{L})-U_{R}|.$ In particular, if $\alpha_i$ is a rarefaction front, there is no splitting of rarefaction waves in the solver $U_A^\delta(U_L, \Phi_{i}(\alpha_i; U_{L})),$ since $|\alpha_i|\le\delta.$

\vspace{0.2cm} \underline{Case c.} A 1-wave front $\alpha_1$ hits the free-boundary. In Lemma \ref{l:4.3} we have shown the accurate Riemann solver, where $\beta_3$ is the reflected wave, separating $U_m$ and the new free-boundary from the states $U_r$ behind $\alpha_1$.  For the simplified Riemann solver, we just replace $\beta_3$ by a non-physical front $\epsilon$ travelling with speed $\hat{\lambda}$. So the state ahead  and behind of $\epsilon$ is still $U_r$ and $U_m$ respectively, and the free-boundary is unchanged.
By Lemma \ref{l:4.3}, we  have the estimate
\begin{eqnarray}\label{eq36}
\epsilon\doteq|U_m-U_r|\le C'_1|\beta_3|=C'_1K_{b_1}|\alpha_1|\le C'_1 C_b|\alpha_1|.
\end{eqnarray}

The non-physical fronts will be considered as fronts of the fourth family in the rest of the paper.

\subsection{Approximate solutions}\label{sec32}

For any sufficiently small $\delta>0$,
we construct a $\delta$-approximate solution $(U^{\delta}(x, y),g^\delta(x))$ to \eqref{E:1.2}\eqref{E:1.3} by
induction in the region $\{x>0, y\in\mathbb{R}\}$ as follows: \vspace{0.2cm}

\begin{itemize}
\item[ Step $1$. ] For $x=0$, we approximate the initial data $U_0$ by $U^{\delta}(0, y)$ ($y\ge0$), a piecewise constant function with finite jumps. It is required that
    \[\mathrm{TV}. U^{\delta}(0, y)\le \varepsilon\le\varepsilon_0,\quad \norm{U^{\delta}(0, y)-U_{+}}_{L^1([0,+\infty))}<\delta.\]
     Note that our assumptions in Theorem \ref{T:1.1} imply that $\lim_{y\to+\infty}U_0(y)=U_+$. So $U^\delta(0,y)=U_+$ for large $y$. The number $\varepsilon_0$ (see Theorem \ref{T:1.1}) is chosen small so that all the standard Riemann problems at the discontinuous points of $U^{\delta}(0, y)$ are solvable.

    At the corner $(0,0)$, as in Lemma \ref{lem1}, we solve a free-boundary Riemann problem with a strong rarefaction wave.

    Then we
approximate all the rarefaction waves appeared in these Riemann problems by rarefaction fronts as
described by Case 1 in \S \ref{S:4.1}. \vspace{0.2cm}

\item[ Step $2$. ] By induction, we assume that $(U^{\delta}, g^\delta)$ has been constructed for $x<\tau$, for some
$\tau>0$, and assume that $U^{\delta}|_{x<\tau}$ consists of a finite
number of wave fronts and for the first time, some of them interact, at $x=\tau$.  As shown in \S \ref{S:4.1}, we solve
a Riemann problem when two wave fronts interact, or a free-boundary
Riemann problem when a wave front hits the free-boundary. Thus we extend the approximate solution $(U^\delta,g^\delta)$ beyond $x=\tau$.
\end{itemize}

\begin{remark}
We may adjust speeds of wave fronts by a quantity arbitrarily small, so that there are no more than two discontinuities intersect at a point $(\tau, y_0)$, and only one wave front hits the free-boundary for each ``time" $x>0$.  Also, at any ``time" $x=\tau$, only one interaction occurs \cite[p.132, Remark 7.1]{bressan}.  Otherwise, we need technically more complicate estimates of wave interactions as in \cite[p.290]{HR2015}. Also, by our choice of speed $\hat{\lambda}$, a non-physical front will never hit the free boundary.
\end{remark}

To distinguish those fronts obtained by splitting the strong rarefaction wave that issued from the corner and the other weak rarefaction fronts coming from perturbations of the initial date, we  assign to each front of an approximate solution an integer called {\it generation order} in the following way, cf. \cite[p.300]{HR2015}. This is also used as a tool to estimate the total strength of non-physical fronts, at any given ``time" $x$, of an approximate solution.

\begin{itemize}
\item[ (A)] All wave fronts issued from $x=0,y>0$  have order $1$; the free-boundary, as well as  all the rarefaction fronts obtained from splitting the rarefaction waves issued from the corner $(0,0)$, have order $0$.

\item[(B)] A wave front of order $k$ hits the free-boundary, then  the generation order of the reflected fronts are still $k$.

\item[(C)] An $i$-wave front $\alpha_i$ of order $k_1$ interacts with a $j$-wave front $\beta_j$ of order $k_2$ at a point $(\tau, y_0)$. We then assign  generation order to the produced $l$-wave fronts $(l=1,2,3,4)$ to be
    \begin{equation}\label{E:4.4}
\begin{cases}
k_1+k_2 &\text{ if } l\neq i, j,\\
\min\{ k_1, k_2\} &  \text{ if }l=i=j,\\
k_1 &\text{ if }l=i\neq j,\\
k_2 &  \text{ if } l=j\neq i.
\end{cases}
\end{equation}
\end{itemize}

\begin{definition}(Strong rarefaction front)
A front $s$ is called a strong rarefaction front
provided that $s$ is a 3-rarefaction wave front with generation order $0$.
Otherwise, it is called a weak front.
\end{definition}

There may appear weak front of generation order zero, but all non-physical fronts have order at least one.

We now indicate what solver is used to solve Riemann problems encountered in Step 2 above. To be more specific,
there is at most one of the following  six cases occurring at the interaction ``time" $x=\tau$:

\begin{itemize}
\item[] \textbf{Case 1.} Two weak fronts $\alpha_{i}$ and $\beta_j$ $(1\leq i,j\leq3)$ interact;

\vspace{0.1cm}

\item[] \textbf{Case 2.} A 1-weak front $\alpha_1$ hits the characteristic free-boundary;

\vspace{0.1cm}
\item[] \textbf{Case 3.} A strong rarefaction front $s$
interacts with an $i$-weak wave front $\alpha_i$ $(i=1,2)$ from the above at ``time" $x=\tau$;

\vspace{0.1cm}
\item[] \textbf{Case 4.} A strong rarefaction front $s$
interacts with a 3-weak shock front $\alpha_3$ from the below (above)  at ``time" $x=\tau$;

\vspace{0.1cm}
\item[] \textbf{Case 5.} A strong rarefaction front $s$
interacts with a non-physical front $\epsilon$;

\vspace{0.1cm}
\item[] \textbf{Case 6.} A non-physical front $\epsilon$ interacts with an
$i$-weak wave $\alpha_i$ from the above ($i=1, 2, 3$).
\end{itemize}
For the given approximate solution $(U^\delta, g^\delta)$,  set
\begin{eqnarray}\label{E:5.3}
E_{\delta}(\tau-)=\left\{
\begin{array}{llll}
|\alpha_i||\beta_j|, & \quad \text{case 1}, \quad i,j\in\{1, 2, 3\},\\[5pt]
|\alpha_1|,& \quad \text{case 2},\\[5pt]
|\alpha_i||s|, & \quad \text{case 3}, \quad i=1, 2, \\[5pt]
\min\{|\alpha_3|, |s|\},   & \quad \text{case 4},\\[5pt]
|s||\epsilon|, & \quad \text{case 5},\\[5pt]
|\alpha_i||\epsilon|, &\quad \text{case 6},\quad i=1, 2, 3.
     \end{array}
     \right.
\end{eqnarray}

\begin{itemize}
\item[\underline{Rule 1.}] For Case 1--Case 4,  if $E_\delta(\tau-)>\mu_{\delta}$,  then the Riemann problem at the interacting point is solved by accurate Riemann solver. Otherwise we adopt the simplified Riemann solver. Here $\mu_{\delta}$ is a small parameter depending on $\delta$, which is to be specified  in \eqref{eq55}.

\vspace{0.3cm}
\item[\underline{Rule 2.}] For Case 5--Case 6,  we always use the simplified Riemann solver.

\end{itemize}

\begin{remark}
To make sure we could construct the approximate solution $(U^\delta, g^\delta)$ in $\{0\le x<T\}$ for any given large $T$, by the above front tracking algorithm, we need to guarantee the following:
\begin{itemize}
\item[(A).] For any $\tau>0$, $U^\delta(\tau)\in D(U_+,\delta_0)$, and the perturbation of total variation is small, so each Riemann problem could be solved;

\item[(B).] At any ``time" $x=\tau$, the total number of physical and non-physical front is finite, with a bound independent of $\tau$. So only a finite number of interactions occur in the approximate solution.
\end{itemize}
We notice that (A) is demonstrated in the next section by showing a Glimm functional $F(\tau)$ is decreasing, if the total variation of the initial perturbation is small, by specifying  $\varepsilon_0$ in Theorem \ref{T:1.1}. Then (B) follows easily.

Furthermore, to prove consistency of the approximate solutions, the key point is to show that  the total strength of the non-physical front at any non-interaction ``time" $x=\tau$ is of the order $O(\delta)$. This is achieved later by choosing carefully $\mu_\delta$, see \eqref{eq55}.
\end{remark}

To end this section, we state the following lemma, which says how to estimate the distance from a state $U_3$ by the 3-rarefaction wave curve to a state $U_1$. It can be used to prove \eqref{E:1.5} claimed in Theorem \ref{T:1.1}.

\begin{lemma}\label{l4}
Suppose that the three constant states $U_1, U_2, U_3\in
D(U_+,\delta_{0})$ satisfy $\displaystyle U_{2}=\Phi(
\alpha_1, \alpha_2, \alpha_3;U_1)$,  $U_2$ and $U_3$ are
connected by a non-physical front $\epsilon$.
Then
\begin{eqnarray}\label{E:5.2}
\left\{
\begin{array}{llll}
\displaystyle\Big|(I(q, B)+\theta)(U_3)-(I(q, B)+\theta)(U_1)\Big|
=O(1)\Big(\sum^{2}_{i=1}|\alpha_{i}|+|\alpha^{-}_{3}|+|\epsilon|\Big), \\[6pt]
\displaystyle|(q^2+\frac{2c^2}{\gamma-1})(U_3)-(q^2+\frac{2c^2}{\gamma-1})(U_1)|
=O(1)\Big(\sum^{2}_{i=1}|\alpha_{i}|+|\alpha^{-}_{3}|+|\epsilon|\Big),\\
\displaystyle\big|p_3\rho^{-\gamma}_{3}-p_{1}\rho^{-\gamma}_{1}\big|=O(1)
\Big(\sum^{2}_{i=1}|\alpha_{i}|+|\alpha^{-}_{3}|+|\epsilon|\Big),\\[6pt]
     \end{array}
     \right.
\end{eqnarray}
where $\alpha^{-}_{3}=\min\{\alpha_3, 0\}$, and $O(1)$ has a bound $C'_1$ that depends only on the background solution.
\end{lemma}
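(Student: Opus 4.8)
The plan is to walk from $U_1$ to $U_3$ along the four fronts — the $1$-, $2$-, $3$-waves joining $U_1$ to $U_2$, then the non-physical front joining $U_2$ to $U_3$ — and to control the increment across each front of the three quantities
\[
J_1(U)\doteq I(q,B)+\theta,\qquad J_2(U)\doteq q^2+\tfrac{2}{\gamma-1}c^2,\qquad J_3(U)\doteq p\rho^{-\gamma}.
\]
On $\overline{D(U_+,\delta_0)}$, a compact set, the quantities $\rho,p,u$ stay bounded below by positive constants and $u>c>0$, so each $J_k$ is a $C^1$ function of $U$ there, with gradient bounded by a constant determined by the background solution and $\delta_0$. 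I would introduce the intermediate states $U_a\doteq\Phi_1(\alpha_1;U_1)$ and $U_b\doteq\Phi_2(\alpha_2;U_a)$, so that $U_2=\Phi_3(\alpha_3;U_b)$, and then estimate $J_k$ across each of the four fronts separately, adding up at the end by the triangle inequality.

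First I would dispose of the non-physical front and the $1$- and $2$-waves, all of which are handled by elementary Lipschitz estimates. Since, by the very definition of its strength in \S\ref{S:4.1}, $|U_3-U_2|=|\epsilon|$, the mean value theorem gives $|J_k(U_3)-J_k(U_2)|=O(1)|\epsilon|$. The $1$-wave curve $\alpha_1\mapsto\Phi_1(\alpha_1;U_1)$ and the $2$-wave surface $\alpha_2\mapsto\Phi_2(\alpha_2;U_a)$ are Lipschitz (in fact piecewise $C^2$, resp.\ $C^\infty$) in their parameters, with constants uniform over the region; composing with the $C^1$ maps $J_k$ and recalling the convention $|\alpha_2|=|\alpha_{21}|+|\alpha_{22}|$ from Remark \ref{rm21} yields $|J_k(U_a)-J_k(U_1)|=O(1)|\alpha_1|$ and $|J_k(U_b)-J_k(U_a)|=O(1)|\alpha_2|$.

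The crux is the $3$-wave, and it is here that $\alpha_3^-$ — rather than $\alpha_3$ — must appear. If $\alpha_3\ge0$, then $U_2$ lies on the $3$-rarefaction curve through $U_b$, along which each of $J_1,J_2,J_3$ is \emph{exactly} constant by the defining relations \eqref{eq23}; hence $J_k(U_2)=J_k(U_b)$ and this front contributes nothing at all. If $\alpha_3<0$, then $\Phi_3(\cdot;U_b)$ restricts to the $C^2$ shock branch on $\{\alpha_3\le0\}$ with $\Phi_3(0;U_b)=U_b$; setting $F_k(\alpha_3)\doteq J_k(\Phi_3(\alpha_3;U_b))$, the function $F_k$ is $C^1$ on $\{\alpha_3\le0\}$ with uniformly bounded derivative and $F_k(0)=J_k(U_b)$ equals the constant value from the previous case, so $|J_k(U_2)-J_k(U_b)|=|F_k(\alpha_3)-F_k(0)|=O(1)|\alpha_3|=O(1)|\alpha_3^-|$. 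Summing the four increments gives \eqref{E:5.2}, with $O(1)$ the maximum over $\overline{D(U_+,\delta_0)}$ of the relevant gradient and Jacobian bounds, i.e.\ a constant $C_1'$ depending only on the background solution. The genuinely delicate point is precisely this last step — preventing the $3$-rarefaction part from polluting the estimate — which hinges on the exact invariance of $J_1,J_2,J_3$ along $3$-rarefactions together with the one-sided Taylor expansion at $\alpha_3=0$ on the piecewise-defined wave curve; everything else is routine $C^1$ bookkeeping on the compact set $D(U_+,\delta_0)$.
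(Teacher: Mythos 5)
Your proposal is correct and follows essentially the same route as the paper: the key observation is the exact invariance of $I(q,B)+\theta$, $q^2+\frac{2}{\gamma-1}c^2$ and $p\rho^{-\gamma}$ along the $3$-rarefaction branch (the defining relations \eqref{eq23}), so that only $|\alpha_3^-|$ enters, while the $1$-, $2$-waves, the $3$-shock branch and the non-physical front are handled by standard $C^1$/Lipschitz estimates in the wave parameters. Your version merely spells out the intermediate states and the one-sided Taylor argument more explicitly than the paper does.
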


\begin{proof}
From the expression of  rarefaction wave curves, for any $\alpha_{3}\ge0$, we have
\begin{eqnarray*}
\begin{split}
&\displaystyle\Big(I(q, B)+\theta\Big)\big(\Phi_{3}(\alpha_3; U)\big)
=\Big(I(q, B)+\theta\Big)(U),\\[5pt]
&\displaystyle(q^2+\frac{2c^2}{\gamma-1})(\Phi_3(\alpha_3;
U))=(q^2+\frac{2c^2}{\gamma-1})(U),\\
&\Big(p\rho^{-\gamma}\Big)\big(\Phi_{3}(\alpha_3; U)\big)
   =\Big(p\rho^{-\gamma}\Big)(U).
\end{split}
\end{eqnarray*}
For $\alpha_3<0$ and other waves, we use the standard results, namely, employ wave curve/surface parameters $\alpha_i, i=1, 2, 3$ to measure variations of physical states (see, for example,  \cite[p.256, (5.144)]{HR2015}). 
This completes proof of the lemma.
\end{proof}

\section{Monotonicity of Glimm Functional}\label{S:5}

In this section, we construct a Glimm functional and prove its
monotonicity based on local interaction estimates. Then by induction we demonstrate that for any $\delta>0$ small, we could construct a global approximate solution $(U^\delta, g^\delta)$ as outlined in \S \ref{sec32}.

\subsection{A Glimm functional}\label{sec41}
For convenience of writing, for any weak front $\alpha$, denote its position and magnitude  by
$y_{\alpha}(x)$ and $\alpha$, respectively. Similarly, for a front $s$ of
the strong rarefaction wave, denote its location and magnitude by
$y_s(x)$ and $s$.
We also define approaching waves as follows.

\begin{definition}(Approaching waves)
\begin{itemize}
\item $(\alpha_i, \beta_j)\in \mathcal {A}_1(x)$: two weak physical fronts
$\alpha_i$ and $\beta_j$ ($i, j \in \{1, 2, 3\}$), located at
points $y_{\alpha_i}(x)$ and $y_{\beta_j}(x)$ respectively, are approaching, provided
$y_{\alpha_i}(x)< y_{\beta_j}(x)$, and satisfy that
either $i>j$, or $i=j$ and at least one of them is a shock;

\item $(\alpha_i, \epsilon)\in\mathcal{A}_2(x)$: a weak physical front $\alpha_i$ located at
$y_{\alpha_i}(x)$ is approaching a non-physical front $\epsilon$ located at $y_{\epsilon}(x)$, if $y_{\alpha_i}(x)>y_{\epsilon}(x)$.
\end{itemize}
\end{definition}

The concept here of approaching waves is the same as that introduced by Glimm in \cite{GJ}. We next introduce the most important functionals to study strong rarefaction waves. For any weak front $\alpha$ and any non-physical front $\epsilon$, denote
\[
\begin{split}
& R(x, \alpha, {\rm b})=\{s|s \text{ is  a front of the strong rarefaction wave with } y_s(x)< y_{\alpha}(x)\},\\
& R(x, \epsilon, {\rm a})=\{s|s \text{ is  a front of the strong
rarefaction wave with } y_s(x)>y_{\epsilon}(x)\},
\end{split}
\]
and define \footnote{Here and in the following, $\sum\{h(\alpha): \alpha\in \Lambda\}$ means taking sums of $h(\alpha)$ for $\alpha$ runs in the set $\Lambda$.}
\[
\begin{split}
&W(\alpha, x)=\exp\Big(K_{\omega}
\sum\{|s(x)|:s\in R(x, \alpha, {\rm b})\}\Big),\\
&W(\epsilon, x)=\exp\Big(K_{np}\sum\{|s(x)|:s\in R(x, \epsilon,
{\rm a})\}\Big).
\end{split}
\]
We observe that $R(x,\alpha,{\rm b})$ is the set of those strong
rarefaction fronts that lie below the weak front $\alpha$ at ``time"
$x$, and $R(x,\epsilon,{\rm a})$ is the set of  strong rarefaction
fronts that lie above the non-physical front $\epsilon$ at
``time" $x$. The idea is, for example, $\epsilon$ is approaching all the strong 3-rarefaction fronts in $R(x,\epsilon, \mathrm{a})$. The crucial functionals  $W(\alpha, x)$ and
$W(\epsilon, x)$, utilize fast growth of the exponential functions,  could drastically magnify the decrease of strength of the strong rarefaction wave, when a weak front penetrating into it, by further choosing the constants $K_\omega$ and $K_{np}$ large, thus overtaken
the difficulty that the 3-strong rarefaction wave has a large total variation, for which the original Glimm interaction potential (see $Q_0$ below) fails.

We further set
\begin{eqnarray*}
\begin{split}
L_{i}(x)&=\sum\big\{|\alpha_i|:\alpha_{i}\ \text{ is an $i$-weak physical wave front at ``time"} \,x \big\},\  1\leq i\leq 3,\\[5pt]
L_{4}(x)&=\sum\big\{|\epsilon|:\epsilon \ \text{ is a non-physical
front at ``time"}\, x \big \}.
\end{split}
\end{eqnarray*}
The two functionals are used to control the
total variation of all the weak waves introduced by the perturbations
of the initial data  (excluding the corner) in our problem.  We also need the following
functionals:
\begin{eqnarray*}
\begin{split}
&Q_{0}(x)=\sum\big\{|\alpha_i||\beta_j|: (\alpha_i, \beta_j)\in
\mathcal
{A}_1(x) \big\}+\sum\big\{|\alpha_i||\epsilon|:(\alpha_i, \epsilon)\in\mathcal{A}_2(x)\big\},\\[5pt]
&Q_{i}(x)=\sum\big\{|\alpha_i|W(\alpha_{i}, x): \text{ $\alpha_i$ is an $i$-weak  front}\},\quad  i=1, 2,  \\[5pt]
&Q_{4}(x)=\sum\big\{|\epsilon|W(\epsilon, x): \text{ $\epsilon$ is a non-physical front} \big\},\\[5pt]
\end{split}
\end{eqnarray*}
which represent respectively the interaction potentials between weak physical wave
fronts and/or non-physical fronts, a weak wave front and the 3-strong rarefaction waves,  a non-physical front and the 3-strong rarefaction waves. The functional $Q_0$ was originally introduced by Glimm, while $Q_{i}$ ($i=1,2,4$) resemble those appeared in \cite[p.139, (7.65)]{bressan}, and has been used in many previous works \cite{CKZ1,D1,DKZ,Zhang}. We do not need the interaction potential between 3-weak shocks and the strong 3-rarefaction waves, since for this case there are cancellations and it is not necessary to consider second-order terms in interactions of waves.

Define $\underline{S}>0$ to be the strength of the background strong rarefaction wave, namely $\Phi_3(\underline{S};U_-)=U_+$. Then we set
\begin{eqnarray*}
\begin{split}
&S(x)=\sum\{s(x): s\, \text{is a strong 3-rarefaction wave front}\},\\
&F_{1}(x)=|S(x)-\underline{S}|.
\end{split}
\end{eqnarray*}
The latter is used to measure the perturbation of  the 3-strong rarefaction wave at each ``time" $x$.

Finally we introduce
\begin{eqnarray*}
\begin{split}
L_0(x)&=\sum^{4}_{i=1}L_{i}(x),\\[3pt]
L_w(x)&=KL_{1}(x)+L_{2}(x)+K_3L_{3}(x)+L_{4}(x),\\[3pt]
Q(x)&=K_{0}Q_{0}(
x)+\sum^{2}_{i=1}K_{i}Q_{i}(x)+K_4Q_{4}(x),\\[3pt]
F_{0}(x)&=L_{w}(x)+ Q(x),
\end{split}
\end{eqnarray*}
and the Glimm functional is defined as
\begin{eqnarray*}
\begin{split}
F(x)&=F_{0}(x)+K_*F_{1}(x),
\end{split}
\end{eqnarray*}
where $K, K_0, K_1, K_2, K_3, K_4, K_\omega, K_{np}$ and $K_{*}$ are positive constants called
{\it weights} that need to be chosen later (cf. \eqref{eq424new}-\eqref{eq432}). The weight $K$ is used to handle the reflections of 1-wave fronts on free-boundary, while $K_3$ is used to magnify the cancellation between 3-weak shock fronts and 3-strong rarefaction waves. Although it turns out that we may take $K_1=K_2=K_4=K_*=1$ later, we retain them for easy to track estimates of each term in the Glimm functional in the following computations.

\subsection{Decreasing of Glimm functional}\label{sec42}

Note that the Glimm functional experiences changes only if two fronts interact, or a physical front hits the free-boundary, at some interaction ``time" $x=\tau$. We have the following crucial result.

\begin{theorem}\label{T:5.1}
There exist positive
constants $K,\ K_0,\ K_1,\ K_2,\ K_3,\ K_4,\ K_\omega,$ $K_{np},\ $ $ K_{*}$ and $\delta^*$ that depend only on the background solution and $\delta_0$, such that if
\begin{equation}\label{eq42}
F(\tau-)<\delta^*
\end{equation}
and $\delta<\delta^*$,  then for the approximate solution $(U^\delta, g^\delta)$, we have
\begin{equation}\label{E:5.16}
F(\tau+)- F(\tau-)\leq -\frac{1}{4}E_{\delta}(\tau-).
\end{equation}
\end{theorem}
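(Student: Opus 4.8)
The plan is to prove Theorem~\ref{T:5.1} by a case-by-case analysis of the at most six interaction types listed in \S\ref{sec32}, estimating in each case the change $\Delta F(\tau)=F(\tau+)-F(\tau-)$ and showing it is bounded by $-\tfrac14 E_\delta(\tau-)$ once the weights $K,K_0,K_1,K_2,K_3,K_4,K_\omega,K_{np},K_*$ are chosen in the right order of magnitude. First I would record the elementary ingredients: the standard Glimm--Bressan interaction estimates for weak/weak interactions (controlling $\Delta L_i$ and $\Delta Q_0$ by $O(1)E_\delta$), the reflection estimate $\beta_3=-K_{b_1}\alpha_1$ from Lemma~\ref{l:4.3} (with $0<K_{b_1}\le C_b$) for Case~2, the cancellation estimate from Lemma~\ref{l3} for a $3$-weak shock penetrating the strong $3$-rarefaction (Case~4), and the simplified-solver bookkeeping for non-physical fronts. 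The key structural observation, already flagged in \S\ref{sec41}, is that when a weak front crosses one strong rarefaction front $s$ the weight $W(\alpha,x)$ or $W(\epsilon,x)$ jumps multiplicatively by $e^{\pm K_\omega|s|}$ or $e^{\pm K_{np}|s|}$; choosing $K_\omega,K_{np}$ large turns the tiny $O(1)|s|$ change of each individual strong front strength into a large negative contribution to $Q_i$ that dominates all positive terms.

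The order of the argument would be: (1) Cases~1 and~6 (purely weak, or weak against non-physical) are the classical estimates — $\Delta L_w\le C_1 E_\delta$ is beaten by $\Delta(K_0 Q_0)\le -K_0 E_\delta/2 + C_1 K_0 L_0 E_\delta$ once $K_0$ is large and $\delta^*$ small, exactly as in \cite[Ch.~7]{bressan}; the weight functionals $Q_1,Q_2,Q_4$ change only through the already-bounded strengths, contributing $O(1)E_\delta\cdot(\text{number of strong fronts})$, which is $O(1)E_\delta$ since $S(x)\le \underline S+\delta^*$. (2) Case~2 (1-wave hits the free-boundary): here $L_1$ drops by $|\alpha_1|$ and $L_3$ grows by $K_{b_1}|\alpha_1|\le C_b|\alpha_1|$, so one needs $K>K_3 C_b$ in $L_w=KL_1+\dots+K_3 L_3+\dots$ to make $\Delta L_w\le -(K-K_3C_b)|\alpha_1|<0$; the new $Q$-terms (the reflected $\beta_3$ creates new approaching pairs and new $W$-weighted contributions) are controlled by $C_1 F(\tau-)|\alpha_1|\le C_1\delta^*|\alpha_1|$, absorbed by choosing $\delta^*$ small. (3) Cases~3 and~5 (weak or non-physical front meets a strong rarefaction front from above): this is where the exponential weights do the work — the weak $i$-front ($i=1,2$) passes through $s$, and $W(\alpha_i,\cdot)$ changes by the factor $e^{K_\omega|s|}$ (the front newly enters or leaves $R(x,\alpha,\mathrm b)$), so $\Delta Q_i$ contains a term $\sim (e^{K_\omega|s|}-1)|\alpha_i|\ge K_\omega|s||\alpha_i| = K_\omega E_\delta$ of the correct sign after multiplication by $-K_i$; similarly $Q_4$ and $K_{np}$ for Case~5. (4) Case~4 (3-shock crosses the strong rarefaction): no second-order potential is used; instead Lemma~\ref{l3} gives a genuine cancellation $\Delta S = -c\min\{|\alpha_3|,|s|\}$ with $c>0$, so $\Delta F_1\le -c E_\delta$ and $K_* $ large finishes it, while the increase in $L_3$ (the shock may be weakened rather than consumed) is handled by $K_3$.

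The main obstacle will be Case~3 (and its relatives), namely tracking \emph{all} the side effects of a weak wave penetrating the strong rarefaction wave: the crossing not only changes $W(\alpha_i,x)$ for the penetrating front but also alters $Q_0$ (new approaching pairs with the strong fronts are created or destroyed, which in the original Glimm functional would be an $O(|s|\cdot\text{total var})$ disaster), changes the strengths of the strong fronts themselves by $O(1)|\alpha_i|$ (affecting $F_1$, $S(x)$, and every $W$-weight of every other weak front lying on the far side), and for $i=1$ produces a transmitted wave that may later hit the free-boundary. The resolution — and the reason the weights must be chosen in the nested order $K_\omega,K_{np}\gg K_i \gg K_0 \gg K_3 \gg K,K_* \gg 1 \gg \delta^*$ (with precise inequalities to be pinned down in \eqref{eq424new}--\eqref{eq432}) — is that every one of these parasitic positive terms is either $O(1)E_\delta$ times a factor bounded by $F(\tau-)<\delta^*$, hence absorbable, or is of the form $O(1)K_j|s||\alpha_i|$ with $j<i$ in the hierarchy, hence dominated by the single good term $-K_i K_\omega |s||\alpha_i|$ from the weight jump. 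I would present the hierarchy of weight choices first as a lemma-free list of constraints, verify the constraints are simultaneously satisfiable (each bounds a later constant from above in terms of earlier ones), and then check in each of the six cases that \eqref{E:5.16} holds; Cases~1,2,4,6 are short, Cases~3 and~5 carry the essential content.
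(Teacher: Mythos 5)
Your overall strategy coincides with the paper's: a six-case analysis, with the exponential weights $W(\alpha,x)$, $W(\epsilon,x)$ supplying the dominant negative term when a weak or non-physical front crosses a strong rarefaction front, and the remaining cases handled by the classical Glimm mechanism with a suitably nested choice of weights. Cases 1, 2, 3, 5, 6 as you describe them are essentially the paper's argument. However, your mechanism for Case 4 is wrong. You claim that the cancellation gives $\Delta S=-c\min\{|\alpha_3|,|s|\}$ and hence $\Delta F_1\le -cE_\delta$, to be exploited by taking $K_*$ large. But $F_1(x)=|S(x)-\underline{S}|$ is the \emph{absolute value} of the perturbation of the total strong-rarefaction strength; since $S(\tau-)$ may already lie below $\underline{S}$, a decrease of $S$ can \emph{increase} $F_1$, and indeed the paper can only prove $F_1(\tau+)-F_1(\tau-)\le\frac32|\alpha_3|$ (resp.\ $\le|s|$) by the triangle inequality. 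So $K_*F_1$ is a parasitic term here, and making $K_*$ large makes Case 4 worse, not better. The actual source of decrease is the consumption of weak $3$-shock strength: $L_3$ drops by $|\alpha_3|$ (when $\gamma_3\ge0$) or by $|s|-O(1)|\alpha_3||s|$ (when $\gamma_3<0$), and it is the weight $K_3$ on $L_3$ in $L_w$ — chosen large relative to $K_*$ and to the $O(1)E_\delta$ side effects — that yields $\Delta F\le-\frac14 E_\delta$. The paper accordingly fixes $K_*=1$ and $K_3=5$. Your Case 4, as written, does not close.

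A secondary but concrete problem is the weight hierarchy you propose, $K_\omega,K_{np}\gg K_i\gg K_0\gg K_3\gg K,K_*$. In Case 1 a weak--weak interaction produces new weak fronts of strength $O(1)|\alpha_i||\beta_j|$ and a non-physical front of the same order; each of these enters $Q_1,Q_2,Q_4$ with its weight $W\le e^{K_\omega C_0}$ or $e^{K_{np}C_0}$, where $C_0$ bounds the total strong-rarefaction strength (which is \emph{not} small). Beating these with $-\frac12K_0|\alpha_i||\beta_j|$ forces $K_0\gtrsim (K_1+K_2)e^{K_\omega C_0}+K_4e^{K_{np}C_0}$, i.e.\ $K_0$ must be \emph{exponentially large} in $K_\omega$, the opposite of your ordering $K_i\gg K_0$. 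The constraints are still simultaneously satisfiable, but only in the order the paper uses (roughly: $K_{np},K_4,K_*,K_1,K_2,K_3$ first, then $K$, then $K_\omega$, then $K_0$, and $\delta^*$ last so that quantities like $K_\omega e^{K_\omega C_0}L_0(\tau-)$ and $K_0L_0(\tau-)$ are small); with your ordering the verification step you announce would fail.
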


Recall that $E_\delta(\tau)$ has been defined by \eqref{E:5.3}. From the definition of $F$, the assumption \eqref{eq42} and $\delta<\delta^*$ imply that

\begin{itemize}
\item[1)] There is a constant $C_0$ depending only on the background solution so that  $S(\tau-)\le C_0$;

\item[2)]  $L_0(\tau-)< \delta^*$ (note that we will take $K\ge1, K_3\ge1$ later);

\item[3)]  $U^{\delta}(\tau-, y)\in D(U_+,{\delta_{0}})$, and $|s|\le \delta<\delta^*$.
\end{itemize}

To prove \eqref{E:5.16}, we now check the six cases listed before in \S \ref{sec32}.\\

\textbf{\underline{Case 1}. Interaction between weak physical fronts.}\\

Let the two weak fronts $\alpha_i$ and $\beta_j$ interact at a point on the line $x=\tau$, and $\gamma_l$ be the generated waves, $l=1, 2, 3$, and $\epsilon$ be the outgoing non-physical front (if we use the simplified Riemann solver). By a
standard procedure (see \cite[p.133]{bressan} or \cite[p.290]{HR2015}), we have the
following estimates, even if we use the convention made in Remark \ref{rm21} on strengths of 2-waves.
\begin{lemma}\label{L:5.1}
It holds that
\begin{equation}
\epsilon=O(1)|\alpha_i||\beta_j|,\label{E:5.5}
\end{equation}
and
\begin{itemize}
\item when $i\neq j$,
\begin{equation}
\gamma_i=\alpha_i+O(1)|\alpha_i||\beta_j|, \;\; \gamma_j=\beta_j+O(1)|\alpha_i||\beta_j|;
\end{equation}
\item when $i=j$,
\begin{align}
&\gamma_l=\alpha_i+\beta_i+O(1)|\alpha_i||\beta_i|, \quad \text{for }l=i, \\
&\gamma_l=O(1)|\alpha_i||\beta_i|,\quad\qquad\qquad \text{ for } l\neq i.\label{E:5.6}
\end{align}
\end{itemize}
All the quantities $O(1)$ here are bounded in $D(U_+,\delta_0)$ with a uniform bound $C_1$.
\end{lemma}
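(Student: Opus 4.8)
The plan is to reduce the statement to the classical Glimm–Lax interaction estimates for a strictly hyperbolic system with genuinely nonlinear or linearly degenerate fields, and then explain what extra care is needed because the Euler system \eqref{E:1.1} is not strictly hyperbolic (the second family is doubly degenerate, with the convention from Remark \ref{rm21} that $|\alpha_2|=|\alpha_{21}|+|\alpha_{22}|$). So the first step is to recall the map $\Phi$ from \eqref{E:4.1}: given the three incoming/outgoing constant states $U_L$, $U_M$, $U_R$ with $U_M=\Phi_j(\beta_j;U_L)$ and $U_R=\Phi_i(\alpha_i;U_M)$, I want to solve $\Phi(\gamma_1,\gamma_2,\gamma_3;U_L)=U_R$ (accurate solver) or $\Phi(\gamma_1,\gamma_2,\gamma_3;U_L)=U_R'$ (simplified solver), where the auxiliary state $U_R'$ is the one defined in \S\ref{S:4.1}. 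Since all states stay inside $D(U_+,\delta_0)$ and the wave strengths are small, $\Phi$ is a $C^2$ (piecewise $C^2$ in the genuinely nonlinear directions, $C^2$ across the degenerate family) diffeomorphism near the origin, so $(\gamma_1,\gamma_2,\gamma_3)$ are uniquely determined and depend smoothly on $(\alpha_i,\beta_j)$.

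The core computation is a second-order Taylor expansion of $(\gamma_1,\gamma_2,\gamma_3)$ in $(\alpha_i,\beta_j)$ around $(0,0)$. Writing $\Phi_i(\alpha_i;U)=U+\alpha_i r_i(U)+\tfrac12\alpha_i^2(\nabla r_i\cdot r_i)(U)+O(\alpha_i^3)$ and composing, one gets $U_R=U_L+\alpha_i r_i+\beta_j r_j+(\text{quadratic terms})+\dots$, and similarly $\Phi(\gamma_1,\gamma_2,\gamma_3;U_L)=U_L+\sum_l\gamma_l r_l+(\text{quadratic})+\dots$; matching to first order gives $\gamma_i=\alpha_i+o(1)$, $\gamma_j=\beta_j+o(1)$ when $i\ne j$, and $\gamma_i=\alpha_i+\beta_i+o(1)$ when $i=j$; matching the quadratic terms and using linear independence of $\{r_1,r_{21},r_{22},r_3\}$ in $D(U_+,\delta_0)$ (guaranteed since $u>c>0$ there, see \eqref{eq13}) pins down the quadratic coefficients. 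The key cancellations that make the error quadratic rather than linear are: (a) when $i=j$ and both waves belong to the same family, the transversality term vanishes and the leading error is the genuine-nonlinearity/Lie-bracket term, which is $O(|\alpha_i||\beta_i|)$; (b) when $i>j$ the interaction term is exactly the Lie bracket $[r_i,r_j]$ contribution, again $O(|\alpha_i||\beta_j|)$; (c) when $i<j$ the two waves are non-approaching and the outgoing pattern is simply the two incoming waves reordered, so the error is even higher order but certainly $O(|\alpha_i||\beta_j|)$. Because $U_R'-U_R=O(|\alpha_i||\beta_j|)$ by construction of the simplified solver, the non-physical front strength satisfies $\epsilon=|U_R-U_R'|=O(1)|\alpha_i||\beta_j|$, which is \eqref{E:5.5}, and replacing $U_R$ by $U_R'$ in the matching does not change the stated estimates. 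The uniform bound $C_1$ comes from compactness of $D(U_+,\delta_0)$: all the $C^2$ norms of $\Phi$, $r_l$, and the inverse of the Jacobian are bounded there.

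The one genuinely non-standard point, and the step I expect to need the most attention, is handling the linearly degenerate family of multiplicity two. Here the second ``wave'' is really a pair $\alpha_2=(\alpha_{21},\alpha_{22})$ corresponding to the commuting vector fields $r_{21}$ and $r_{22}$, and the surface $\Phi_2(\alpha_2;U)$ is obtained by first flowing along $C_{21}$ then along $C_{22}$ as defined in \S\ref{sec21}. I need to check that (i) the two fields $r_{21},r_{22}$ commute (they do: $r_{21}=(u,v,0,0)^\top$ scales velocities, $r_{22}=(0,0,0,\rho)^\top$ scales density, and neither component depends on the other's variable), so the surface is well-defined independently of the order and behaves like an honest two-parameter linearly degenerate family; (ii) with the convention $|\alpha_2|=|\alpha_{21}|+|\alpha_{22}|$, the interaction estimates involving a 2-wave — a 2-wave meeting a 1- or 3-wave, or two 2-waves merging — still produce quadratic error terms of the form $O(1)(|\alpha_{21}|+|\alpha_{22}|)(|\beta_\cdot|)$; this follows because all the relevant brackets $[r_1,r_{2k}]$, $[r_{2k},r_3]$, $[r_{21},r_{22}]$ are bounded on $D(U_+,\delta_0)$, and the only subtlety is bookkeeping the two components, which is why the lemma explicitly flags the convention. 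Once this is in place the proof is a direct transcription of the argument in \cite[p.133]{bressan} or \cite[p.290]{HR2015}, with $D(U_+,\delta_0)$ playing the role of the small neighborhood of a constant state; I would cite those references for the detailed bookkeeping rather than reproduce it, and only spell out the modifications forced by the non-strict hyperbolicity.
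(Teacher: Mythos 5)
Your proposal is correct and takes essentially the same route as the paper: the paper proves this lemma simply by appealing to the standard interaction estimates of \cite[p.133]{bressan} and \cite[p.290]{HR2015}, remarking only that the convention $|\alpha_2|=|\alpha_{21}|+|\alpha_{22}|$ for the doubly degenerate second family causes no difficulty. Your fleshing-out of that citation --- the second-order Taylor matching of the composed wave-curve map $\Phi$ on $D(U_+,\delta_0)$, and the check that $r_{21}$ and $r_{22}$ commute so the $2$-surface behaves like an honest linearly degenerate family --- is precisely the bookkeeping those references perform, so nothing essential is missing.
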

Without loss of generality, we may assume in the following that $C_1\ge1.$

Based on the estimates \eqref{E:5.5}--\eqref{E:5.6}, we have
\[
L_k(\tau+)-L_k(\tau-)=O(1)|\alpha_i||\beta_j|,\quad k=1,2,3,4,\]
and no matter the interaction happens above/below/in the middle of the strong rarefaction waves, it always holds that
\[F_1(\tau+)-F_1(\tau-)=0.\]
Using now standard arguments as in \cite[pp.294-295]{HR2015}, and assumption \eqref{eq42}, we have
\[Q_0(\tau+)-Q_0(\tau-)\le(C_1L_0(\tau-)-1)|\alpha_i||\beta_j|\le-\frac12|\alpha_i||\beta_j|,\]
provided that
\begin{eqnarray}\label{eq47new}
\delta^*\le\frac{1}{2C_1}.
\end{eqnarray}
By bounds of $S(\tau-)$, we also get
\begin{equation*}\begin{split}
Q_{4}(\tau+)-Q_{4}(\tau-)=&\epsilon W(\epsilon,\tau+)=\epsilon W(\epsilon,\tau-)\le e^{C_0K_{np}}O(1)|\alpha_i||\beta_j|,\\
Q_{k}(\tau+)-Q_{k}(\tau-)\le&e^{C_0K_{\omega}}O(1)|\alpha_i||\beta_j|,\quad k=1,2.
\end{split}\end{equation*}
It follows that
\begin{equation*}\begin{split}
Q(\tau+)-Q(\tau-)\leq&\Big(C_1((K_1+K_2)e^{C_0K_{\omega}}+K_4e^{C_0K_{np}})-\frac12 K_0\Big)|\alpha_i||\beta_j|,\\
F(\tau+)-F(\tau-)\leq& \Big(C_1\big(K+K_3+3+(K_1+K_2)e^{C_0K_{\omega}}+K_4e^{C_0K_{np}}\big)\nonumber\\
&\qquad-\frac12 K_0\Big)|\alpha_i||\beta_j|
\le -\frac{1}{4}|\alpha_i||\beta_j|,
\end{split}\end{equation*}
where we choose $K_0$ large enough so that
\begin{eqnarray}
C_1\big(K+K_3+3+(K_1+K_2)e^{C_0K_{\omega}}+K_4e^{C_0K_{np}}\big)-\frac12 K_0\le -\frac12.\label{eq411}
\end{eqnarray}
\vspace{0.3cm}

\textbf{\underline{Case 2}. Reflection of a front on the free-boundary.}\\

Assume that a weak $1$-wave front $\alpha_1$ hits the characteristic free-boundary at a point $(\tau, g^\delta(\tau))$. For the accurate Riemann solver, denote the reflected wave
by $\beta_3$. From Lemma \ref{l:4.3}, we have
\[
\begin{split}
L_1(\tau+)-L_1(\tau-)=&-|\alpha_1|, \quad L_i(\tau+)-L_i(\tau-)=0,\; i=2, 4,\\[5pt]
L_3(\tau+)-L_3(\tau-)=&K_{b_1}|\alpha_1|, \quad F_1(\tau+)-F_1(\tau-)=0,\\[5pt]
\end{split}
\]
and
\begin{eqnarray*}
&&Q_0(\tau+)-Q_0(\tau-)=|\beta_3|L_0(\tau_-)=K_{b1}L_0(\tau_-)|\alpha_1|,\\
&&Q_k(\tau+)-Q_k(\tau-)=0, \quad k=1,2, 4.
\end{eqnarray*}
It follows that
\begin{eqnarray*}
F(\tau+)-F(\tau-)=\big(-K+(K_0L_{0}(\tau-)+K_3)K_{b1}\big)|\alpha_1|\le-\frac14|\alpha_1|,
\end{eqnarray*}
if (recall that $K_{b_1}\le C_b$)
\begin{equation}\label{eq413*}
K\geq C_{b}(K_3+K_0L_0(\tau-))+\frac14.
\end{equation}

If the simplified Riemann solver is used, then $\beta_3$ is replaced by a non-physical front $\epsilon$, and from \eqref{eq36}, we have
\[
\begin{split}
L_1(\tau+)-L_1(\tau-)=&-|\alpha_1|, \quad L_i(\tau+)-L_i(\tau-)=0,\; i=2, 3,\\[5pt]
L_4(\tau+)-L_4(\tau-)=&C'_1K_{b_1}|\alpha_1|, \quad F_1(\tau+)-F_1(\tau-)=0,\\[5pt]
\end{split}
\]
as well as
\begin{eqnarray*}
&&Q_0(\tau+)-Q_0(\tau-)\le|\epsilon|L_0(\tau-)=C'_1K_{b1}L_0(\tau-)|\alpha_1|,\\
&&Q_k(\tau+)-Q_k(\tau-)=0, \quad k=1,2,\\
&&Q_4(\tau+)-Q_4(\tau-)=|\epsilon|W(\epsilon,\tau-)\le e^{K_{np}C_0}C'_1K_{b_1}|\alpha_1|.
\end{eqnarray*}
Therefore
\begin{eqnarray*}
F(\tau+)-F(\tau-)=\big(-K+(K_0L_{0}(\tau-)+1+K_4e^{K_{np}C_0})C'_1K_{b1}\big)|
\alpha_1|\le-\frac14|\alpha_1|,
\end{eqnarray*}
if
\begin{equation}\label{eq413}
K\geq C'_1C_{b}(1+K_3+K_4e^{K_{np}C_0}+K_0L_0(\tau-))+\frac14.
\end{equation}
Without loss of generality, we may assume $C'_1>1$ in \eqref{eq36}, so this condition implies \eqref{eq413*}.

\vspace{0.2cm}

\textbf{\underline{Case 3}. Interaction between a strong 3-rarefaction front and 1- or 2-weak fronts from above.}\\[-0.3cm]

In this case, without loss of generality, we consider a 3-strong
rarefaction front $s$ interacts with a 1-weak  front $\alpha_1$. It
is similar to analyze the other situation. Let the below and above
states of the 3-strong rarefaction  front $s$ be $U_l$ and $U_m$,
respectively. The incoming 1-wave front $\alpha_1$ connects the
states $U_m$ and $U_r$. Denote the outgoing waves by
$\gamma_j$ ($1\leq j\leq 2$), $s'$, and the non-physical front by $\epsilon$, respectively.

The following lemma is standard \cite[p.133, Lemma 7.2]{bressan}.
\begin{lemma}\label{lem42}
We have the estimates:
\[
\begin{split}
&\gamma_1=\alpha_1+O(1)|\alpha_1||s|,\quad \gamma_2=O(1)|\alpha_1||s|, \\
&s'=s+O(1)|\alpha_1||s|, \quad \epsilon=O(1)|\alpha_1||s|,
\end{split}
\]
where $O(1)$ depend only on the background solution, with a bound $C_1$.
\end{lemma}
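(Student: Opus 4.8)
To prove the lemma, the plan is to reduce the stated estimates to the classical Glimm interaction bounds for two \emph{small} waves. The essential observation is that, although $s$ is one of the fronts composing the large $3$-rarefaction wave issued from the corner, an individual rarefaction front produced by the accurate Riemann solver of \S\ref{S:4.1} has strength $|s|\le\delta<\delta^{*}$; hence the collision of $s$ with the weak front $\alpha_1$ is genuinely a small--small interaction, and the unboundedness of the total variation of the whole rarefaction fan plays no role here. Concretely, from $U_m=\Phi_3(s;U_l)$ and $U_r=\Phi_1(\alpha_1;U_m)$ one has $U_r=\Phi_1\bigl(\alpha_1;\Phi_3(s;U_l)\bigr)$, and since $U_l\in D(U_+,\delta_0)$ with $|\alpha_1|,|s|$ small all intermediate states stay in $D(U_+,\delta_0)$, so by the solvability of Riemann problems recalled in \S\ref{sec2} there are unique parameters $(\gamma_1,\gamma_2,\gamma_3)$, with $\gamma_3\doteq s'$, satisfying $\Phi(\gamma_1,\gamma_2,\gamma_3;U_l)=U_r$. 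By the implicit function theorem applied to $\Phi$ near $\gamma=0$ --- whose $\gamma$-Jacobian there has columns $r_1,r_{21},r_{22},r_3$ at $U_l$, linearly independent on $D(U_+,\delta_0)$ because the flow is supersonic --- the map $(\alpha_1,s)\mapsto(\gamma_1,\gamma_2,\gamma_3)$ inherits the regularity of the wave curves and has derivatives bounded by a constant $C_1$ depending only on the background solution and $\delta_0$.

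Next I would use the behaviour of this map on the two coordinate axes: if $\alpha_1=0$ then $U_r=\Phi_3(s;U_l)$, so $(\gamma_1,\gamma_2,\gamma_3)=(0,0,s)$, and if $s=0$ then $U_r=\Phi_1(\alpha_1;U_l)$, so $(\gamma_1,\gamma_2,\gamma_3)=(\alpha_1,0,0)$. Consequently each of the three quantities $\gamma_1-\alpha_1$, $\gamma_2$, $\gamma_3-s$ vanishes whenever $\alpha_1=0$ and whenever $s=0$; writing $f(\alpha_1,s)$ for any one of them (a function of $U_l$ as well, but with uniform bounds), the identity
\[
f(\alpha_1,s)=\int_0^{s}\!\!\int_0^{\alpha_1}\partial_\sigma\partial_\tau f(\sigma,\tau)\,\dd\sigma\,\dd\tau ,
\]
which holds because $f$ vanishes on both axes, gives $|f(\alpha_1,s)|\le C_1|\alpha_1||s|$. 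This yields $\gamma_1=\alpha_1+O(1)|\alpha_1||s|$, $\gamma_2=O(1)|\alpha_1||s|$ and $s'=s+O(1)|\alpha_1||s|$ for the accurate Riemann solver; since the corrections are of higher order, $s'$ is still a (strong) rarefaction front and $\gamma_1$ has the same type (shock or rarefaction) as $\alpha_1$.

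For the simplified Riemann solver I would instead read off from its construction (Case a of \S\ref{S:4.1}, with $j=3>i=1$) that the outgoing physical waves are exactly the reordered ones $\gamma_1=\alpha_1$, $\gamma_2=0$, $s'=s$ connecting $U_l$ to the auxiliary state $U_r'=\Phi_3\bigl(s;\Phi_1(\alpha_1;U_l)\bigr)$, while the non-physical front has strength
\[
\epsilon=|U_r-U_r'|=\bigl|\Phi_1\bigl(\alpha_1;\Phi_3(s;U_l)\bigr)-\Phi_3\bigl(s;\Phi_1(\alpha_1;U_l)\bigr)\bigr| .
\]
This commutator of the two wave curves again vanishes on both coordinate axes, so the same integral argument gives $\epsilon=O(1)|\alpha_1||s|$; and for the accurate solver $\epsilon=0$ trivially satisfies the bound. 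Thus all the asserted estimates follow.

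The only step requiring genuine care is the uniformity of the constant $C_1$ in $O(1)$: it hinges on confining every intermediate state to the fixed region $D(U_+,\delta_0)$ and on the smallness $|s|\le\delta$, and it is precisely because of these two facts that no real obstacle arises --- which is why the lemma is quoted as standard \cite[p.133, Lemma 7.2]{bressan}. The genuine difficulty created by the large total variation of the strong rarefaction wave is deferred to the construction and monotonicity of the weighted Glimm functional, where the exponential weights $W(\alpha,x)$, $W(\epsilon,x)$ and the weight $K_3$ convert the cancellations appearing through these elementary estimates into an actual decrease of $F(x)$.
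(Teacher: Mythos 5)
Your proof is correct and follows exactly the standard argument that the paper invokes by citing \cite[p.133, Lemma 7.2]{bressan}: reduce to a small--small interaction (since each strong rarefaction front individually has strength $|s|\le\delta$), apply the implicit function theorem to $\Phi(\gamma_1,\gamma_2,\gamma_3;U_l)=U_r$, and obtain the bilinear bounds from the vanishing of $\gamma_1-\alpha_1$, $\gamma_2$, $\gamma_3-s$ and the commutator on both coordinate axes. The paper offers no proof beyond the citation, so your write-up simply supplies the standard details, including the correct reading of the simplified Riemann solver in Case~a with $j=3>i=1$.
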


Based on this lemma, we get
\begin{eqnarray*}
&&L_k(\tau+)-L_k(\tau-)=O(1)|\alpha_1||s|, \quad k=1,2,4;\\
&& L_3(\tau+)-L_3(\tau-)=0,
\end{eqnarray*}
and
\begin{eqnarray*}
S(\tau+)-S(\tau-)=s'-s=O(1)|\alpha_1||s|,
\end{eqnarray*}
which implies that
\begin{eqnarray*}
F_1(\tau+)-F_1(\tau-)=O(1)|\alpha_1||s|
\end{eqnarray*}
by triangle inequality.
It is easy to see that
\begin{eqnarray*}
Q_0(\tau+)-Q_0(\tau-)=4O(1)|\alpha_1||s|L_0(\tau-).
\end{eqnarray*}

The estimates of $Q_k$ ($k=1,2,4$) are more complicated. Let $\epsilon'$ be any non-physical front lying below the interaction point at ``time" $\tau$, and $S_{\epsilon'}(\tau)$ the total strength of strong 3-rarefaction fronts lying above $\epsilon'$.
Then $$S_{\epsilon'}(\tau+)-S_{\epsilon'}(\tau-)=s'-s,$$ and
\begin{equation*}\begin{split}
&|W(\epsilon',\tau+)-W(\epsilon',\tau-)|\nonumber\\
=&|e^{K_{np}S_{\epsilon'}
(\tau+)}-e^{K_{np}S_{\epsilon'}(\tau-)}|=|e^{K_{np}S_{\epsilon'}
(\tau-)}
\left(e^{K_{np}(s'-s)}-1\right)|\\
=&|e^{K_{np}S_{\epsilon'}
(\tau-)}\left(e^{K_{np} O(1)|\alpha_1||s|}-1\right)|\le 3e^{K_{np}C_0}O(1)K_{np}|\alpha_1||s|.
\end{split}\end{equation*}
Here we assumed that
\begin{eqnarray}\label{eq414}
C_1K_{np}L_1(\tau-)\delta<1,
\end{eqnarray}
which implies that
\begin{eqnarray*}
|O(1)|K_{np}|\alpha_1||s|<1,
\end{eqnarray*}
and used the fact that $|e^x-1|\le 3|x|$ for $|x|\le1$.
It follows that
 \begin{equation*}\begin{split}
Q_4(\tau+)-Q_4(\tau-)=&\epsilon W(\epsilon,\tau+)+\sum_{\epsilon'} \epsilon'(W(\epsilon',\tau+)-W(\epsilon',\tau-))\\
\le&\epsilon W(\epsilon,\tau+)+3L_4(\tau-)e^{K_{np}C_0}O(1)K_{np}|\alpha_1||s|\\
=&O(1)|\alpha_1||s|e^{K_{np}C_0}+3L_4(\tau-)e^{K_{np}C_0}O(1)K_{np}|\alpha_1||s|\\
=&O(1)|\alpha_1||s|e^{K_{np}C_0}(1+3L_4(\tau-)K_{np}).
\end{split}\end{equation*}

Similarly, by considering all those weak fronts $\beta_2$ lying above $\alpha_1$ at $x=\tau$, we could obtain
  \begin{equation*}\begin{split}
Q_2(\tau+)-Q_2(\tau-)\le&|\gamma_2| W(\gamma_2,\tau+)+3L_2(\tau-)e^{K_{\omega}C_0}K_{\omega}O(1)|\alpha_1|
|s|\\
=&|\gamma_2| W(\alpha_1,\tau-)e^{-K_\omega s}+3L_2(\tau-)e^{K_{\omega}C_0}K_{\omega}O(1)|\alpha_1||s|\\
\le&C_1|\alpha_1||s|(W(\alpha_1,\tau-)+3L_2(\tau-)K_{\omega}e^{K_{\omega}C_0}),
 \end{split}\end{equation*}
provided that
\begin{eqnarray}\label{eq415}
C_1K_{\omega}L_2(\tau-)\delta<1.
\end{eqnarray}
We notice that the term $W(\alpha_1,\tau-)$ shall be retained to balance a term appeared in $Q_1$ below.

Now we turn to $Q_1$. We firstly observe that by definition,
\begin{eqnarray}
W(\gamma_1,\tau+)e^{K_{\omega}s}=W(\alpha_1, \tau-).
\end{eqnarray}
Then
\begin{equation*}\begin{split}
&|\gamma_1| W(\gamma_1,\tau+)-|\alpha_1| W(\alpha_1,\tau-)\\
=&(|\alpha_1|+O (1)|\alpha_1||s|)W(\alpha_1,\tau-)e^{-K_{\omega}s}-|\alpha_1|W(\alpha_1,\tau-)\\
=&|\alpha_1|W(\alpha_1,\tau-)\big(e^{-K_{\omega} s}-1+O(1)|s|e^{-K_{\omega} s}\big)\le|\alpha_1||s|W(\alpha_1,\tau-)(-\frac12 K_\omega+C_1).
 \end{split}\end{equation*}
Here in the second last inequality we used $e^{-x}-1<-\frac12 x$ for $1>x>0$, and $O(1)|s|e^{-K_{\omega} s}\le C_1|s|$ since $s>0$.  The weights will be chosen independent of $\delta$, so we may choose $\delta$ small such that $K_\omega s<1.$  We remark that the computation carried here is one of the key point in dealing with large rarefaction waves.

Then considering any  1-wave front $\beta_1$ lying above $\alpha_1$ at $x=\tau$, like what we obtained before, one has
\begin{eqnarray*}
\sum_{\beta_1}|\beta_1|(W(\beta_1,\tau+)-W(\beta_1,\tau-))
\le 3L_1(\tau-)K_\omega O(1)|\alpha_1||s|e^{K_\omega C_0},
\end{eqnarray*}
provided that \eqref{eq415} holds. Hence we have
\begin{eqnarray*}
Q_1(\tau+)-Q_1(\tau-)\le|\alpha_1||s|\Big(
W(\alpha_1,\tau-)(-\frac12K_\omega+C_1)+3C_1L_1(\tau-)K_\omega e^{K_\omega C_0}\Big).
\end{eqnarray*}
Here we also used the assumption \eqref{eq426}.

Finally, from the above estimates of $Q_k$, we get
 \begin{equation*}\begin{split}
&Q(\tau+)-Q(\tau-)\nonumber\\
\le&4C_1K_0|\alpha_1||s|L_0(\tau-)+ K_1 |\alpha_1||s|\Big(W(\alpha_1,\tau-)\big(-\frac{1}{2}K_\omega+C_1\big)
\nonumber\\
&+3C_1L_0(\tau-)K_\omega e^{K_\omega C_0}\Big)\nonumber\\
&+C_1K_2|\alpha_1||s|\Big( W(\alpha_1,\tau-)+3L_0(\tau-)K_\omega e^{K_\omega C_0}\Big)\nonumber\\
&+C_1 K_4|\alpha_1||s|e^{K_{np} C_0}(1+3L_0(\tau-)K_{np})\nonumber\\
=&|\alpha_1||s|C_1\Big(L_0(\tau-)\big(4K_0+3K_4K_{np}e^{K_{np}C_0}
+3(K_1+K_2)K_{\omega}e^{K_\omega C_0}\big)\nonumber\\
&+\big(K_1(1-\frac{1}{2}\frac{K_\omega}{C_1})+K_2\big)W(\alpha_1,\tau-)+K_4e^{K_{np} C_0}\Big)\\
\le&|\alpha_1||s|C_1\Big(L_0(\tau-)\big(4K_0+3K_4K_{np}e^{K_{np}C_0}
+3(K_1+K_2)K_{\omega}e^{K_\omega C_0}\big)\nonumber\\
&+\underline{\big(K_1(1-\frac{1}{2}\frac{K_\omega}{C_1})+K_2\big)}+K_4e^{K_{np} C_0}\Big),
\end{split}\end{equation*}
here for the last inequality, we used
$W(\alpha_1,\tau-)\ge1$, and the term with underline will be negative.
Therefore
\begin{equation*}\begin{split}
&F(\tau+)-F(\tau-)\nonumber\\
\le & C_1|\alpha_1||s|(K+3+K_*)+Q(\tau+)-Q(\tau-)\nonumber\\
=&C_1|\alpha_1||s|\Big(L_0(\tau-)\big(4K_0+3K_4K_{np}e^{K_{np}C_0}
+3(K_1+K_2)K_{\omega}e^{K_\omega C_0}\big)\nonumber\\
&+\big(K_1(1-\frac{1}{2}\frac{K_\omega}{C_1})+K_2\big)+K_4e^{K_{np} C_0}+K+3+K_*\Big).
 \end{split}\end{equation*}
We wish to choose $K_\omega$ large so that
\begin{eqnarray}\label{eq417}
&&L_0(\tau-)\big(4K_0+3K_4K_{np}e^{K_{np}C_0}
+3(K_1+K_2)K_{\omega}e^{K_\omega C_0}\big)\nonumber\\
&&+\big(K_1(1-\frac{1}{2}\frac{K_\omega}{C_1})+K_2\big)+K_4e^{K_{np} C_0}+K+3+K_*\le-\frac12,
\end{eqnarray}
which implies that
\begin{eqnarray*}
\begin{split} F(\tau+)-F(\tau-)\leq-\frac{1}{4}|\alpha_1||s|.
\end{split}
\end{eqnarray*}
Here we used the fact that $C_1\ge1$.

If we consider a weak 2-wave front interacts with a 3-strong rarefaction front, we may get similar estimates as above, provided that
\begin{eqnarray}
&&L_0(\tau-)\big(4K_0+3K_4K_{np}e^{K_{np}C_0}
+3(K_1+K_2)K_{\omega}e^{K_\omega C_0}\big)\nonumber\\
&&+\big(K_2(1-\frac{1}{2}\frac{K_\omega}{C_1})+K_1\big)+K_4e^{K_{np} C_0}+K+3+K_*\le-\frac12.\label{eq419}
\end{eqnarray}

\textbf{\underline{Case 4}. Interaction between a 3-strong rarefaction front and a 3-weak shock front from the below (above).}\\[-0.3cm]

Suppose that a 3-strong rarefaction front $s>0$ and a 3-weak shock
front $\alpha_3<0$ interact at a point on $x=\tau$. Let $\gamma_k$
and $\epsilon$ be the outgoing $k$-waves $(k=1,2,3)$ and
non-physical front, respectively.
\begin{lemma}\label{lem43}
We have the following estimates (cf. \cite[p.133, Lemma 7.2]{bressan}):
\[
\begin{split}
&\gamma_i=O(1)|\alpha_3||s|,\quad i=1, 2, \\
&\gamma_3=\alpha_3+s+O(1)|\alpha_3||s|, \quad
\epsilon=O(1)|\alpha_3||s|.
\end{split}
\]
\end{lemma}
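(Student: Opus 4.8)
The plan is to recognize Lemma~\ref{lem43} as an instance of the classical same‑family interaction estimate. Both the strong $3$‑rarefaction front $s$ and the weak $3$‑shock front $\alpha_3$ belong to the third characteristic family, which is genuinely nonlinear, and both have \emph{small} strength: under the standing hypothesis \eqref{eq42} one has $|s|\le\delta<\delta^*$ and $|\alpha_3|\le L_3(\tau-)\le L_0(\tau-)<\delta^*$. Hence, locally at this single interaction, the large total variation of the background rarefaction wave is irrelevant, and one is exactly in the situation of \cite[p.133, Lemma 7.2]{bressan}; the label ``strong'' carried by $s$ only records its generation order $0$ and does not affect its elementary‑wave structure. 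First I would fix the three constant states $U_L$ (below), $U_M$ (middle) and $U_R$ (above), with $U_M=\Phi_3(s;U_L)$ and $U_R=\Phi_3(\alpha_3;U_M)$ when the shock arrives from above, and the symmetric arrangement when it arrives from below; up to relabelling it suffices to treat one case. The outgoing waves are then defined through the map $\Phi=\Phi_3\circ\Phi_2\circ\Phi_1$ of \S\ref{sec21}: by $\Phi(\gamma_1,\gamma_2,\gamma_3;U_L)=U_R$ for the accurate solver (for which $\epsilon=0$), and by $\Phi(\gamma_1,\gamma_2,\gamma_3;U_L)=U_R'$, $U_R'=\Phi_3(\alpha_3+s;U_L)$, $\epsilon=|U_R-U_R'|$, for the simplified solver (Case~a of \S\ref{S:4.1} with $i=j=3$).

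The key step is a second‑order Taylor expansion. Using that $\Phi$ is piecewise $C^2$ with $\Phi_j(0;U)=U$, $\partial_{\alpha_j}\Phi_j(0;U)=r_j(U)$ (the normalized right eigenvectors in \eqref{eq13}), and applying the same expansion to $\Phi_3(\alpha_3;\Phi_3(s;U_L))$, one obtains
\[
\Phi_3\big(\alpha_3;\Phi_3(s;U_L)\big)=\Phi_3(\alpha_3+s;U_L)+O(1)\,|\alpha_3|\,|s|,
\]
where the $O(1)$ remainder, decomposed in the basis $\{r_1,r_{21},r_{22},r_3\}$, is matched against $\Phi(\gamma_1,\gamma_2,\gamma_3;U_L)$. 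Comparing the four components yields $\gamma_i=O(1)|\alpha_3||s|$ for $i=1,2$ and $\gamma_3=\alpha_3+s+O(1)|\alpha_3||s|$; here the normalization $\nabla_U\lambda_3\cdot r_3\equiv1$ is precisely what makes the leading‑order cancellation exact, so that a $3$‑rarefaction of signed strength $s>0$ meeting a $3$‑shock of signed strength $\alpha_3<0$ produces a net $3$‑wave of signed strength $\alpha_3+s$ (if this is positive it is a rarefaction and gets split into subfronts, which is immaterial for the strength bound). For the non‑physical front: $\epsilon=0$ with the accurate solver, while with the simplified solver $\epsilon=|\Phi_3(\alpha_3;\Phi_3(s;U_L))-\Phi_3(\alpha_3+s;U_L)|=O(1)|\alpha_3||s|$ by the very same expansion. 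Finally, since $|s|\le\delta$ and $|\alpha_3|<\delta^*$, each such product also satisfies $O(1)|\alpha_3||s|=O(1)\min\{|\alpha_3|,|s|\}=O(1)E_\delta(\tau-)$, which is the form needed in the Glimm estimates of Case~4.

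I do not anticipate a genuine obstacle. The two points deserving care are: (i) uniformity of the $O(1)$ constants over $D(U_+,\delta_0)$, which follows from compactness of $D(U_+,\delta_0)$ together with uniform (piecewise) $C^2$ bounds on the wave curves and on $\Phi$, available exactly as in the strictly hyperbolic case because $\lambda_1<\lambda_2<\lambda_3$ stay separated for supersonic states (so the linearly degenerate $2$‑block never obstructs the expansion); and (ii) careful bookkeeping of the \emph{sign} in $\gamma_3=\alpha_3+s+O(1)|\alpha_3||s|$, since the cancellation it encodes is exactly what will be magnified, via the weight $K_3$, to control the relevant terms and $F_1$ in Theorem~\ref{T:5.1}.
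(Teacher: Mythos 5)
Your proposal is correct and is essentially the argument the paper relies on: the paper gives no proof of Lemma \ref{lem43} beyond citing \cite[p.133, Lemma 7.2]{bressan}, and your second-order Taylor expansion of $\Phi_3(\alpha_3;\Phi_3(s;U_L))$ against $\Phi(\gamma_1,\gamma_2,\gamma_3;U_L)$, with uniform $C^2$ bounds on $D(U_+,\delta_0)$ and the observation that each individual front $s$ of the ``strong'' rarefaction wave has small magnitude $|s|\le\delta$, is precisely the proof of that cited lemma. Your two cautionary points (uniformity of the $O(1)$ constants despite the multiplicity-two linearly degenerate field, and the sign bookkeeping in $\gamma_3=\alpha_3+s+O(1)|\alpha_3||s|$ that feeds the cancellation exploited via $K_3$ in Case 4) are exactly the right ones.
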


Depending on whether the produced 3-wave is a rarefaction wave or a shock,  we consider the two subcases.

\textbf{\underline{Case 4.1}.} $\gamma_3\geq 0$.

In this case,
\[
|\gamma_3|=|s|-|\alpha_3|+O(1)|\alpha_3||s|,
\]
and
\[
\begin{split}
&L_i(\tau+)-L_i(\tau-)=O(1)|\alpha_3||s|, \quad i=1, 2, 4,\\[5pt]
&L_3(\tau+)-L_3(\tau-)=-|\alpha_3|.\\[5pt]
\end{split}
\]
Since
\[S(\tau+)-S(\tau-)=|\gamma_3|-|s|=-|\alpha_3|+O(1)|\alpha_3||s|,\]
by triangle inequality,
\[F_1(\tau+)-F_1(\tau-)\le |\alpha_3|(1-O(1)|s|)\le\frac32|\alpha_3|.\]
We do not know exactly the interaction potential of $\alpha_3$ at $x=\tau-$ , but which is anyway nonnegative. Hence we have
\[Q_0(\tau+)-Q_0(\tau-)\le 3O(1)L_0(\tau-)|\alpha_3||s|.\]

Now for $k=1,2,$ let $\beta_k$ be any weak front lying above $\alpha_3$ at $x=\tau-$, and $S'$ the total strength of strong 3-rarefaction fronts lying below $\beta_k$ at $x=\tau-$. Then
\begin{equation*}\begin{split}
W(\beta_k,\tau+)-W(\beta_k,\tau-)=&e^{K_\omega(S'-s+\gamma_3)}
-e^{K_\omega S'}\le e^{K_\omega S'}\Big(e^{-K_{\omega}|\alpha_3|(1-O(1)|s|)}-1\Big)\\
\le& e^{-K_{\omega}|\alpha_3|(1-O(1)|s|)}-1\le-\frac14 K_\omega|\alpha_3|.
\end{split}\end{equation*}
Here we used the fact that $e^{-x}-1\le-\frac12 x$ for $0<x<1$, and the assumption that
\begin{eqnarray}\label{eq420}
C_1\delta<\frac12, \quad K_\omega L_3(\tau-)<1.
\end{eqnarray}
It follows that
\begin{equation*}\begin{split}
Q_k(\tau+)-Q_k(\tau-)=&|\gamma_k|W(\gamma_k,\tau+)+\sum_{\beta_k}
|\beta_k|\Big(W(\beta_k,\tau+)-W(\beta_k,\tau-)\Big)\\
\le&O(1)|\alpha_3||s|e^{K_\omega C_0}.
\end{split}\end{equation*}
Similarly, we have
\begin{eqnarray*}
Q_4(\tau+)-Q_4(\tau-)\le|\epsilon|W(\epsilon,\tau+)\le C_1|\alpha_3||s|e^{K_{np} C_0},
\end{eqnarray*}
provided that $K_{np} L_4(\tau-)<1$.

Summing up, one has
\begin{eqnarray*}
Q(\tau+)-Q(\tau-)=|\alpha_3|\Big(3C_1K_0L_0(\tau-)|s|+\sum_{j=1}^2K_j
C_1|s|e^{K_\omega C_0}+K_4C_1|s|e^{K_{np}C_0}\Big),
\end{eqnarray*}
and
\begin{equation*}\begin{split}
F(\tau+)-F(\tau-)\le&|\alpha_3|\Big(3C_1K_0L_0(\tau-)|s|+\sum_{j=1}^2K_j
C_1|s|e^{K_\omega C_0}\\
&+K_4C_1|s|e^{K_{np}C_0}+\frac32K_*+(K+3)C_1|s|-K_3\Big)\\
\le&-\frac14|\alpha_3|,
 \end{split}\end{equation*}
if $K_3$ is sufficiently large so that
 \begin{align}\label{eq421}
K_3>&\frac14+\frac32K_*+\Big(3C_1K_0L_0(\tau-)+\sum_{j=1}^2K_j
C_1e^{K_\omega C_0}\nonumber\\
&+K_4C_1e^{K_{np}C_0}+(K+3)C_1\Big)|s|.
 \end{align}

\textbf{\underline{Case 4.2}.} $\gamma_3< 0$.

For this case,
\[
|\gamma_3|=|\alpha_3|-|s|+O(1)|\alpha_3||s|,
\]
and
\[
\begin{split}
&L_i(\tau+)-L_i(\tau-)=O(1)|\alpha_3||s|, \quad i=1, 2, 4,\\[5pt]
&L_3(\tau+)-L_3(\tau-)=-|s|+O(1)|\alpha_3||s|,\\[5pt]
&S(\tau+)-S(\tau-)=-|s|,\quad F_1(\tau+)-F_1(\tau-)\le|s|.\\[5pt]
\end{split}
\]
Direct calculation yields
 \begin{equation*}\begin{split}
Q_0(\tau+)-Q_0(\tau-)\le& 5O(1)|\alpha_3||s|L_0(\tau-),\\
Q_{4}(\tau+)-Q_{4}(\tau-)=& O(1)|\alpha_3||s|e^{K_{np}C_0}+\sum_{\epsilon'}|\epsilon'|
\Big(W(\epsilon',\tau+)-W(\epsilon',\tau-)\Big)\\
\le&O(1)|\alpha_3||s|e^{K_{np}C_0}.
\end{split}\end{equation*}
Here the summation is over all those non-physical fronts $\epsilon'$ lying below $\epsilon$ at $x=\tau$. Suppose $S'$ is the total strength of strong 3-rarefaction fronts lying above $\epsilon'$. We used the fact that
\begin{eqnarray*}
W(\epsilon',\tau+)-W(\epsilon',\tau-)=e^{K_{np}S'}(1-e^{K_{np}s})\le 1-e^{K_{np}s}\le -K_{np}s<0.
\end{eqnarray*}
Similarly, for $k=1,2,$
\begin{eqnarray*}
Q_{k}(\tau+)-Q_{k}(\tau-)
\le O(1)|\alpha_3||s|e^{K_{\omega}C_0}.
\end{eqnarray*}
Then we conclude
 \begin{equation*}\begin{split}
 Q(\tau+)-Q(\tau-)\le&|s|\Big(C_1|\alpha_3|\big(5K_0L_0(\tau-)+K_4e^{K_{np}C_0}+
 (K_1+K_2)e^{K_\omega C_0}\big)\Big),
\end{split}\end{equation*}
and
 \begin{equation*}\begin{split}
F(\tau+)-F(\tau-)\le&|s|\Big(C_1|\alpha_3|\big(5K_0L_0(\tau-)+K_4e^{K_{np}C_0}+
 (K_1+K_2)e^{K_\omega C_0}\big)\nonumber\\
 &+K_*+(K+3)C_1|\alpha_3|-K_3(1-C_1|\alpha_3|)\Big).
\end{split}\end{equation*}
So if
\begin{eqnarray}
&&C_1L_3(\tau-)\le \frac12,\label{eq422}\\
&&K_3>\frac12+2\Big(C_1|\alpha_3|\big(K+3+5K_0L_0(\tau-)+K_4e^{K_{np}C_0}+
 (K_1+K_2)e^{K_\omega C_0}\big)\nonumber\\&&\qquad\qquad
 +K_*\Big),\label{eq423}
\end{eqnarray}
there follows
\begin{equation}
F(\tau+)-F(\tau-)<-\frac{1}{4}|s|.
\end{equation}

\textbf{\underline{Case 5}. Interaction between a 3-strong rarefaction front and a non-physical front from the below.}\\[-0.3cm]

Suppose that a front $s$ of the 3-strong rarefaction waves and a
non-physical front $\epsilon$ interact when $x=\tau$. Let $s_0$ and
$\epsilon_0$ be respectively the outgoing rarefaction wave front and non-physical
front. Then we have the following standard lemma.
\begin{lemma}\label{lem44}
There hold (see \cite[p.133, Lemma 7.2]{bressan})
\[
\begin{split}
&s_0=s,\quad \epsilon_0=\epsilon+O(1)|\epsilon||s|.
\end{split}
\]
\end{lemma}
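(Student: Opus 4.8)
The plan is to read off $s_0$ and $\epsilon_0$ directly from the definition of the simplified Riemann solver, and then to estimate $\epsilon_0$ by a first-order comparison of the wave curve $\Phi_3$ at two nearby base points. By Rule~2, a Case~5 interaction is always resolved by the simplified solver, and the present situation---a non-physical front coming from below and overtaking a $3$-rarefaction front---is precisely Case~b of \S\ref{S:4.1} with $i=3$. So first I would name the three constant states: $U_L$ below the incoming non-physical front, $U_M$ between it and $s$, and $U_R$ above $s$, so that $|U_M-U_L|=\epsilon$ and $U_R=\Phi_3(s;U_M)$. By the very definition of Case~b, the outgoing physical front is a single $3$-rarefaction front whose strength is unchanged, i.e. $s_0=s$ (and, since $|s|\le\delta$, it is not split into several fronts); it now joins $U_L$ to $\Phi_3(s;U_L)$, while the outgoing non-physical front connects $\Phi_3(s;U_L)$ to $U_R$. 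Hence $\epsilon_0=|\Phi_3(s;U_L)-U_R|=|\Phi_3(s;U_L)-\Phi_3(s;U_M)|$.

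The second step is to compare $\epsilon_0$ with $\epsilon=|U_L-U_M|$. Using $\Phi_3(0;U)=U$, I would write $\Phi_3(s;U_L)-\Phi_3(s;U_M)=(U_L-U_M)+\int_0^s\bigl(\partial_\sigma\Phi_3(\sigma;U_L)-\partial_\sigma\Phi_3(\sigma;U_M)\bigr)\,\dd\sigma$, and bound the integrand by $O(1)|U_L-U_M|=O(1)\epsilon$ using the $C^2$-regularity of $\Phi_3$ on the compact region $D(U_+,\delta_0)$, with the $O(1)$ depending only on the background solution and $\delta_0$. Thus the integral is $O(1)|s|\epsilon$, and the triangle inequality gives $|\epsilon_0-\epsilon|\le|(\Phi_3(s;U_L)-\Phi_3(s;U_M))-(U_L-U_M)|\le O(1)|s|\epsilon$, that is, $\epsilon_0=\epsilon+O(1)|\epsilon||s|$, as claimed.

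I do not expect a genuine obstacle here: this is one of the routine interaction estimates of the type collected in \cite[p.133, Lemma 7.2]{bressan}. The only points that require a little attention are to make sure we are in the correct branch of the simplified Riemann solver (non-physical front below, physical front above), to use that the individual strength $|s|\le\delta$ so that the strong rarefaction front---though belonging to a wave of large total variation---is carried through as a single front of unchanged strength, and to keep all the $O(1)$ constants uniform over $D(U_+,\delta_0)$ so that they depend only on the background solution and $\delta_0$.
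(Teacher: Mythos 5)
Your proposal is correct and follows exactly the standard argument the paper invokes by citing \cite[p.133, Lemma 7.2]{bressan}: the identity $s_0=s$ is immediate from the definition of the simplified Riemann solver (Case b with $i=3$, no splitting since $|s|\le\delta$), and the estimate $\epsilon_0=\epsilon+O(1)|\epsilon||s|$ follows from the $C^1$-dependence of $U\mapsto\Phi_3(s;U)$ on its base point over the compact set $D(U_+,\delta_0)$. The paper gives no separate proof, so there is nothing further to compare.
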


It is clear that
\begin{eqnarray*}
&&L_k(\tau+)-L_k(\tau-)=0, \quad k=1,2,3,\\
&&L_4(\tau+)-L_4(\tau-)=O(1)|\epsilon||s|,\\
&&S(\tau+)-S(\tau-)=0,\quad F_1(\tau+)-F_1(\tau-)=0,\\
&&Q_0(\tau+)-Q_0(\tau-)=O(1)|\epsilon||s|L_0(\tau-).
\end{eqnarray*}
To calculate $Q_k$ for $k=1,2$, note that since $s_0=s$, we always have $W(\beta_k,\tau+)=W(\beta_k,\tau-)$, for any weak $k$-front $\beta_k$ lying above $s$ at $x=\tau$. Since no weak front from $k$-th family is involved in this case, we have
\begin{eqnarray*}
Q_k(\tau+)-Q_k(\tau-)=0, \quad k=1,2.
\end{eqnarray*}
Similarly, for any non-physical front $\epsilon'$ lying below $\epsilon$, we have $W(\epsilon',\tau+)=W(\epsilon',\tau-)$. This implies that
\begin{equation*}\begin{split}
Q_4(\tau+)-Q_4(\tau-)=&|\epsilon_0|W(\epsilon_0,\tau+)-|\epsilon| W(\epsilon, \tau-)\\
=&(|\epsilon|+O(1)|\epsilon||s|)W(\epsilon_0,\tau+)-|\epsilon| W(\epsilon_0,\tau+)e^{K_{np}s}\nonumber\\
\le&|\epsilon| W(\epsilon_0,\tau+)(C_1 s+1-e^{K_{np}s})\le |\epsilon| (C_1 s+1-e^{K_{np}s})\\
\le& |\epsilon| |s|(C_1-K_{np})<0.
\end{split}\end{equation*}
Here we used $W(\epsilon_0,\tau+)\ge1$,  $1-e^x<-x$ for $x>0$, and positiveness of $\epsilon, s$.
It follows that
\[
\begin{split}
&Q(\tau+)-Q(\tau-)\le |\epsilon|| s|(C_1K_0L_0(\tau-)+K_4(C_1-K_{np})),\\
&F(\tau+)-F(\tau-)\le |\epsilon|| s|(C_1+C_1K_0L_0(\tau-)+K_4(C_1-K_{np})).\\
\end{split}
\]
Therefore, if $K_{np}$ is large enough so that
\begin{eqnarray}\label{eq425}
K_4(K_{np}-C_1)>1+C_1+C_1K_0L_0(\tau-),
\end{eqnarray}
then
\[ F(\tau+)-F(\tau-)\leq -\frac{1}{4}|\epsilon||s|.
\]

\textbf{\underline{Case 6}. Interaction between a non-physical front and an $i$-weak front
from the above $(i=1, 2, 3)$.}\\[-0.1cm]

Suppose that a non-physical front $\epsilon$ interacts with an
$i$-weak front $\alpha_i$ $(i=1, 2, 3)$ from the above at some point on $\{x=\tau\}$.
Let the outgoing waves be $\gamma_i$ and $\epsilon'$
respectively. From the definition of the simplified Riemann solver,
we have the following lemma (cf. \cite[p.133, Lemma 7.2]{bressan}).

\begin{lemma}\label{L:4.8}
It holds that
\[
\gamma_i=\alpha_i, \qquad
\epsilon'=\epsilon+O(1)|\alpha_i||\epsilon|,
\]
where $O(1)$ is bounded, with a bound $C_1$ depending only on the background solution.
\end{lemma}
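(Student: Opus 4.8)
The plan is to read the two identities directly off the definition of the simplified Riemann solver, Case b, in \S\ref{S:4.1}. At the interaction point the incoming configuration consists of three constant states: $U_L$ below the non-physical front, $U_M$ between the non-physical front and the $i$-front $\alpha_i$, and $U_R$ above $\alpha_i$, with $|U_M-U_L|=\epsilon$ and $U_R=\Phi_i(\alpha_i;U_M)$. By construction the simplified solver sets the state below the new $i$-front to be $U_L$, the state between the new $i$-front and the new non-physical front (which again travels with speed $\hat{\lambda}$) to be $\Phi_i(\alpha_i;U_L)$, and the state above to be $U_R$. Since $|\alpha_i|\le\delta$ when $\alpha_i$ is a rarefaction front, no splitting occurs in the accurate solver $U_A^\delta(U_L,\Phi_i(\alpha_i;U_L))$, so in every case the outgoing physical front is a single front connecting $U_L$ to $\Phi_i(\alpha_i;U_L)$ and carrying exactly the parameter $\alpha_i$; this is the assertion $\gamma_i=\alpha_i$ (for $i=2$ both components $\alpha_{21},\alpha_{22}$ are left untouched, so it holds also under the convention of Remark \ref{rm21}). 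The outgoing non-physical front connects $\Phi_i(\alpha_i;U_L)$ to $U_R=\Phi_i(\alpha_i;U_M)$, so $\epsilon'=|\Phi_i(\alpha_i;U_M)-\Phi_i(\alpha_i;U_L)|$, and it remains only to compare this with $\epsilon=|U_M-U_L|$.

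For that I would fix $\alpha_i$ and view $U\mapsto\Phi_i(\alpha_i;U)$ as a $C^2$ map on $D(U_+,\delta_0)$, using the $C^2$ dependence of the wave curves/surface $\Phi_j$ on both the wave parameter and the base point recorded in \S\ref{sec21}. Because $\Phi_i(0;U)=U$ identically, the differential $D_U\Phi_i(0;U)$ equals the identity $\mathrm{Id}$ for every $U$; hence, by continuity of $D_U\Phi_i$ on the compact set $\{|\alpha_i|\le\delta_0\}\times\overline{D(U_+,\delta_0)}$, there is a constant $C$ depending only on the background solution and $\delta_0$ with $\|D_U\Phi_i(\alpha_i;U)-\mathrm{Id}\|\le C|\alpha_i|$ (for $i=2$ one replaces $\alpha_i$ by the pair $\alpha_2$ and $|\alpha_i|$ by $|\alpha_{21}|+|\alpha_{22}|$). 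Then, expanding to first order in the base point,
\[
\Phi_i(\alpha_i;U_M)-\Phi_i(\alpha_i;U_L)=(U_M-U_L)+\int_0^1\bigl(D_U\Phi_i(\alpha_i;U_L+t(U_M-U_L))-\mathrm{Id}\bigr)(U_M-U_L)\,\mathrm{d}t,
\]
and taking norms together with $\bigl||a|-|b|\bigr|\le|a-b|$ yields $|\epsilon'-\epsilon|\le C|\alpha_i|\,|U_M-U_L|=C|\alpha_i||\epsilon|$, that is, $\epsilon'=\epsilon+O(1)|\alpha_i||\epsilon|$ with the $O(1)$ bounded by $C$; enlarging the common constant if necessary, we may take this bound to be $C_1$, consistently with the interaction estimates of the earlier cases.

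The argument is entirely routine --- it is the analogue for the present system of the estimate cited as \cite[p.133, Lemma 7.2]{bressan} --- so I do not expect a genuine obstacle. The only step that warrants a little care is the reduction to $D_U\Phi_i(0;\cdot)=\mathrm{Id}$, which is precisely what produces the quadratic-type bound $O(1)|\alpha_i||\epsilon|$ rather than a merely linear one; the remainder is a first-order Taylor expansion of a smooth map together with the definition of the strength of a non-physical front.
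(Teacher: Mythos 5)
Your proposal is correct and follows exactly the route the paper intends: the identity $\gamma_i=\alpha_i$ and the formula $\epsilon'=|\Phi_i(\alpha_i;U_L)-U_R|$ are read directly off the definition of the simplified Riemann solver (Case b in \S\ref{S:4.1}), and the quadratic bound $|\epsilon'-\epsilon|\le C_1|\alpha_i||\epsilon|$ is the standard Lipschitz estimate for $U\mapsto\Phi_i(\alpha_i;U)$ based on $D_U\Phi_i(0;\cdot)=\mathrm{Id}$, which is precisely the content of the cited Lemma 7.2 in Bressan. The paper gives no further detail, so your write-up is simply a fleshed-out version of the same argument.
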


On this occasion, we easily deduce that
\begin{eqnarray*}
&&L_k(\tau+)-L_k(\tau-)=0,\quad k=1,2,3;\\
&&L_4(\tau+)-L_4(\tau-)= O(1)|\alpha_i||\epsilon|,\\
&&S(\tau+)-S(\tau-)=0,\quad F_1(\tau+)-F_1(\tau-)=0,\\
&&Q_0(\tau+)-Q_0(\tau-)= O(1)|\alpha_i||\epsilon| L_0(\tau-)-|\alpha_i||\epsilon|,\\
&&Q_k(\tau+)-Q_k(\tau-)= 0,\quad k=1,2,
\end{eqnarray*}
and
\begin{equation*}\begin{split}
Q_4(\tau+)-Q_4(\tau-)
= &W(\epsilon',
\tau+)|\epsilon'|-W(\epsilon, \tau-)|\epsilon|\\
= &(|\epsilon'|-|\epsilon|)W(\epsilon,
\tau-)\\
= &O(1)|\alpha_i|\epsilon|W(\epsilon, \tau-)\le C_1|\alpha_i||\epsilon|e^{k_{np}C_0}.
\end{split}\end{equation*}
Therefore,  we have
\[
Q(\tau+)-Q(\tau-)\le|\alpha_i||\epsilon|\big((C_1L_0(\tau-)-1)K_0+C_1e^{K_{np}C_0}K_4\big),
\]
and
\[
F(\tau+)-F(\tau-)\le|\alpha_i||\epsilon|\Big((C_1L_0(\tau-)-1)K_0+C_1e^{K_{np}C_0}K_4+C_1\Big).
\]
So we shall choose $K_0, K_4, L_0(\tau-)$ satisfying
\begin{eqnarray}
&&C_1L_0(\tau-)<\frac{1}{20},\label{eq426}\\
&&K_0>1+2C_1(1+K_4e^{K_{np}C_0})\label{eq427}
\end{eqnarray}
to get
\[
F(\tau+)-F(\tau-)\leq -\frac{1}{4}|\alpha_i||\epsilon|.
\]

\vspace{0.4cm}
Finally we choose $\delta^*$ and various weights. We note that \eqref{eq427} is guaranteed by \eqref{eq411}. By \eqref{eq425} and \eqref{eq426}, we may take
\begin{eqnarray}\label{eq424new}
&&K_4=1,\quad K_{np}=2+3C_1
\end{eqnarray}
if \eqref{eq429} holds.
Set
\begin{eqnarray}\label{eq428}
K_*=1, \qquad K_1=K_2=1,
\end{eqnarray}
and from \eqref{eq421}\eqref{eq423}, we choose
\begin{eqnarray}\label{eq426new}
K_3=5,
\end{eqnarray}
provided that
\begin{eqnarray}
&&|s|\le\frac{1}{C_1}\Big(K+6+e^{K_{np}C_0}+2e^{K_\omega C_0}\Big)^{-1},\\
&&|\alpha_3|\le (C_1)^{-1}\Big((K+8)+e^{K_{np}C_0}+2e^{K_\omega C_0}\Big)^{-1},\\
&&K_0L_0(\tau-)\le1.\label{eq429}
\end{eqnarray}
For \eqref{eq413} to be true, we take
\begin{eqnarray}\label{430}
K=C'_1C_b(7+e^{K_{np}C_0})+1.
\end{eqnarray}
Then \eqref{eq411} holds if
\begin{eqnarray}\label{eq431}
K_0=2+2C_1(K+10+e^{C_0K_{np}}+2e^{K_\omega C_0}).
\end{eqnarray}
In the last, from \eqref{eq417}\eqref{eq419} and \eqref{eq428}, we may set
\begin{eqnarray}\label{eq432}
K_\omega=2C_1\big(K+8+e^{C_0K_{np}}\big),
\end{eqnarray}
and require that
\begin{eqnarray}
L_0(\tau-)\le \big(4K_0+3K_{np}e^{C_0K_{np}}+6K_\omega e^{K_\omega C_0}\big)^{-1}
\end{eqnarray}
to guarantee the inequalities \eqref{eq417}\eqref{eq419}.
Therefore we shall choose (see also \eqref{eq47new}, \eqref{eq414}, \eqref{eq425}, \eqref{eq420})
\begin{align}\label{eq434}
\delta^*=&\min\left\{\frac{1}{20C_1(1+K_0+K_\omega+K_{np})}, (4K_0+3K_{np}e^{C_0K_{np}}+6K_\omega e^{K_\omega C_0})^{-1},\right.\nonumber\\
&\left.\frac{1}{\sqrt{C_1(K_{np}+K_\omega)}},
 C_1^{-1}(K+8+e^{K_{np}C_0}+2 e^{K_\omega C_0})^{-1}\right\},
\end{align}
and $F(\tau-)\le\delta^*, \delta<\delta^*$ make all the above estimates valid. Recall that $\delta$ is set so that each rarefaction front has strength at most $\delta$ in the approximate solution $(U^\delta, g^\delta)$.

This completes the proof of Theorem \ref{T:5.1}.
\vspace{0.2cm}

\subsection{Finiteness of fronts and interactions }\label{sec43}

For the $\delta^*$ determined in Theorem \ref{T:5.1}, and any $\delta<\delta^*$, suppose $(U^\delta,g^\delta)$ is an approximate solution constructed by the front tracking algorithm. Let $0<\tau_1<\tau_2<\cdots<\tau_k<\cdots$ be the interaction ``time". Note that for $x=0$, we solve all the Riemann problems by accurate Riemann solver. Particularly, by Lemma \ref{lem23}, we have
\[
|S(0+)-\underline{S}|\leq C|U^{\delta}(0,0+)-U_{+}|\leq C{\rm{TV. }}U^{\delta}(0,y)\le   C_3\varepsilon\le C_3\varepsilon_0.
\]
Also recall that $\sum_{i=1}^4 L_i(0+)\le C_3\mathrm{TV.}U^\delta(0,y)\le   C_3\varepsilon\le C_3\varepsilon_0$ by property of Riemann problems and the assumption in Theorem \ref{T:1.1}; and by definition, $Q_0(0+)\le (\sum_{i=1}^4 L_i(0+))^2$. It follows that
\begin{eqnarray*}
F({\tau_1}-)=F(0+)\le C_3(\varepsilon+\varepsilon^2)\le C_3(\varepsilon_0+\varepsilon_0^2).
\end{eqnarray*}
All the constants $C_3$ appeared here depend only on the Euler system and the background solution, through the constants $C_1$ and weights chosen in previous sections. Hence we could choose $\varepsilon_0\le1$ claimed in Theorem \ref{T:1.1} so that $C_3(\varepsilon_0+\varepsilon_0^2)<\delta^*$. By Theorem \ref{T:5.1}, we infer that
\begin{eqnarray}\label{eq435}
F(\tau)<F({\tau_1}-)=F(0+)\le \min\{\delta^*, 2C_3\varepsilon\}
\end{eqnarray}
for any approximate solution $(U^\delta, g^\delta)$ defined on $0\le x\le\tau$.

\begin{corollary}\label{cor41}
For given $\delta\in(0,\delta^*)$, the number of fronts in the approximate solution $(U^\delta,g^\delta)$ at ``time" $x=\tau$ is bounded by a constant independent of $\tau$, and there are only a finite number of interactions of fronts.
\end{corollary}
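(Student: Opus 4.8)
The plan is to combine monotonicity and boundedness of the Glimm functional (Theorem \ref{T:5.1} and the a priori bound \eqref{eq435}, which give $0\le F(\tau)\le F(0+)<\delta^*$ for all $\tau\ge 0$) with the structural dichotomy between the accurate and simplified Riemann solvers, in the spirit of \cite{bressan}. Fix $\delta\in(0,\delta^*)$. The first step is to bound the total number of \emph{physical} fronts the algorithm ever produces. At $x=0$ the data $U^\delta(0,\cdot)$ is piecewise constant with finitely many jumps, each Riemann problem at $x=0$ is solved exactly, and the strong rarefaction issuing from the corner is split into at most $\lceil\underline{S}/\delta\rceil+1$ fronts; so only finitely many fronts are present at $0+$. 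For $x>0$, the crucial observation is that new physical fronts are created \emph{only} when the accurate solver is invoked: the simplified solvers of Cases a--c in \S\ref{S:4.1} never increase the number of physical fronts (they merge fronts and emit at most one non-physical front), whereas the accurate solver emits the waves of the missing families and re-splits freshly produced rarefactions into pieces of strength $\le\delta$. By Rule 1 of \S\ref{sec32}, for $x>0$ the accurate solver is used only when $E_\delta(\tau-)>\mu_\delta$, and then Theorem \ref{T:5.1} forces $F(\tau+)-F(\tau-)\le-\frac14 E_\delta(\tau-)<-\frac14\mu_\delta$; since $F\ge 0$ is nonincreasing and $F(0+)<\delta^*$, this happens at most $4\delta^*/\mu_\delta$ times. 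As the strengths of all produced waves are dominated by $E_\delta(\tau-)\le F(0+)<\delta^*$ (Lemmas \ref{L:5.1}, \ref{lem42}, \ref{lem43}), each such invocation creates $O(1+\delta^*/\delta)$ new fronts, so the total number $N_{\mathrm p}$ of physical fronts ever appearing --- weak fronts, $3$-strong rarefaction fronts, and the free boundary --- is finite and depends only on $\delta$, $\varepsilon$ and the background solution.

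The second step is to bound the numbers of non-physical fronts and of interactions. A non-physical front is born one at a time, and only at a simplified interaction between two physical objects: a simplified weak--weak interaction (Case 1), a reflection at the free boundary of a $1$-front with $|\alpha_1|\le\mu_\delta$ (Case 2), or a simplified interaction of the strong rarefaction with a weak front (Cases 3--4); in Cases 5--6 an existing non-physical front is merely continued, with no new front. Since all non-physical fronts travel with the single speed $\hat\lambda$, strictly above every characteristic speed on $D(U_+,\delta_0)$, two of them never meet, and a non-physical front can only be overtaken from above by physical fronts, at each such encounter being continued without emitting any new front; hence each non-physical front takes part in at most $N_{\mathrm p}$ interactions and then escapes to $y=+\infty$. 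It remains to bound the number of interactions between two physical objects. Here I would use that, after shrinking $\delta_0$ if necessary, the ranges of admissible speeds of fronts of the three families (and of the free boundary, and of non-physical fronts) on $D(U_+,\delta_0)$ are pairwise disjoint; consequently two physical fronts of different families can meet only in the ``approaching'' configuration, after which they separate permanently, while two fronts of the same family merge. Between two consecutive uses of the accurate solver no new physical front is created and the collection of physical fronts does not grow, so only $O(N_{\mathrm p}^2)$ simplified interactions occur in each such interval; and a $1$-front reaches the free boundary at most once before being reflected, so there are at most $N_{\mathrm p}$ boundary interactions in total. Since there are finitely many accurate-solver times, the numbers of interactions and of non-physical fronts are finite.

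Putting the two steps together, $(U^\delta,g^\delta)$ has only finitely many fronts and interactions overall, hence the number present at any ``time'' $x=\tau$ is at most $N_{\mathrm p}$ plus the finite number of non-physical fronts, a bound independent of $\tau$, which is the assertion. The place I expect to need the most care is the physical--physical interaction count, because it must absorb the two features not present in the classical Cauchy problem: the order-$0$ $3$-strong rarefaction fronts, across which a $3$-weak shock can become a $1$-weak front (Lemma \ref{lem43}), and the characteristic free boundary, which reflects $1$-fronts into $3$-fronts --- both of which could in principle recycle waves indefinitely. The point is that in each case the ``large'' conversions ($E_\delta>\mu_\delta$) coincide with accurate-solver calls, of which there are finitely many, while the ``small'' ones only feed non-physical fronts, so no infinite cascade survives.
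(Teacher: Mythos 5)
Your proposal is correct and follows essentially the same strategy as the paper: the accurate solver is invoked at most $4\delta^*/\mu_\delta$ times because each invocation forces $F$ to drop by at least $\tfrac14\mu_\delta$, each such invocation creates only $O(\delta^*/\delta)$ new physical fronts while simplified interactions create none, and the finitely many physical fronts can pairwise meet only boundedly often, so only finitely many non-physical fronts and interactions arise. Your speed-disjointness discussion and the explicit treatment of boundary reflections are somewhat more detailed than the paper's brief ``each pair of fronts can meet at most twice'' remark, but the argument is the same.
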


\begin{proof}
Let $N(\tau)$ be the number of fronts at $x=\tau$. Then $N_0=N(0+)$ is finite and depends on the number of initial jumps in $U^\delta(0,y)$ and $\varepsilon_0, \delta$.

The changes of $N(\tau)$ can only occur in two situations.

1) The accurate Riemann solver is used at an interaction point $(\tau,y)$. In this case, we have $F(\tau+)-F(\tau-)\le-\frac14 E_\delta(\tau-)\le-\frac14\mu_\delta$. The constant $\mu_\delta$, independent of $\tau$, is chosen by \eqref{eq55}. Since $F$ is nonnegative, this situation occurs at most $4\delta^*/\mu_\delta$ times, and each time, the number of new-born wave fronts (they are physical fronts) is at most $O(3\delta^*/\delta)$.

2) The simplified Riemann solver is used. For Cases 2,4,5,6, the number of fronts does not change. For Cases 1, 3, only one new non-physical front is born. Therefore, we know  the number of physical wave fronts is still $N_p=N_p(\delta^*, \delta, \mu_\delta, N_0)$. Now, two physical fronts can only interact at most twice (possibly one before and one after reflection from the free-boundary). Hence the total number of new non-physical fronts is at most $N_p+2N_p^2$. Therefore we have
\begin{eqnarray*}
N(\tau)\le 2N_p+2N_p^2,\quad \forall \tau>0.
\end{eqnarray*}
Since the number of fronts is finite, and each pair of fronts can meet at most twice, there are only finite interactions.
\end{proof}

\subsection{Uniform estimates of total variations}

To prove \eqref{eq:2.1.12}, we need to show that
\begin{eqnarray}\label{eq437}
|\mathrm{TV. }\{p^\delta(x,\cdot):[g^\delta(x),+\infty)\}-\mathrm{TV. }\{p_{\mathrm{b}}(x,\cdot):[g_\mathrm{b}(x),+\infty)\}|\leq
M_1\varepsilon.
\end{eqnarray}

Recall that an approximate solution $U^\delta(x,y)$ is piecewise constant, with discontinuities across finite (say $N$) fronts $y=y_k(x)$, $k=1,\ldots,N$, which are labeled so that $y_1(x)>y_2(x)>\cdots>y_N(x)=g^\delta(x)$. Each $y_k$ connects the state $U_{k-1}$ above it to the state $U_k$ below it.
Let $H(s)$ be the Heavside step function, whose value is zero for negative argument and one for positive argument. Then we could write
\begin{eqnarray}
U^\delta(x,y)=U_0+\sum_{k=1}^{N-1}(U_k-U_{k-1})H(y_k(x)-y),\quad \text{for} \ y\ge g^\delta(x).
\end{eqnarray}
Now set
\begin{eqnarray*}
\Theta_k=\begin{cases}
1,&y_k\ \ \text{is a weak wave front or non-physical front,}\\
0, &y_k\ \ \text{is a 3-strong rarefaction  front.}
\end{cases}
\end{eqnarray*}
We may write $U^\delta(x,\cdot)=U^\delta_w(x,\cdot)+U^\delta_s(x,\cdot)$, with $$U^\delta_w(x,\cdot)=U_0+\sum_{k=1}^{N-1}(U_k-U_{k-1})\Theta_kH(y_k(x)-y)$$ consists of only weak fronts, and
$$U^\delta_s(x,\cdot)=\sum_{k=1}^{N-1}(U_k-U_{k-1})(1-\Theta_k)H(y_k(x)-y)$$ consists of only 3-strong rarefaction fronts.  Let ${p}_a^\delta$ (respectively $p_b^\delta$) be the pressure ahead of the upmost (respectively behind the lowermost) 3-strong rarefaction fronts in $U^\delta$. Then by decreasing of pressure across rarefaction waves (from above to below), and note that pressure increases only passing a 3-shock front in the middle of the rarefaction waves fans (cf. Lemma \ref{l3}), we have
\begin{equation*}\begin{split}
&|\mathrm{TV. }p_s^\delta(x,\cdot)-\mathrm{TV. }p_{\rm b}(x,\cdot)|=  |({p}_a^\delta-{p}_b^\delta)-(p_+-\bar{p})|+ C_2L_3(x)\\
\le &|{p}_a^\delta-{p}_+|+|{p}_b^\delta-\bar{p}|+C_2L_3(x)
\le \mathrm{TV. }p_w^\delta(x,\cdot)+C_2L_3(x)\nonumber\\
\le & \mathrm{TV. }U_w^\delta(x,\cdot)+C_2L_3(x).
\end{split}\end{equation*}
By triangle inequality and \eqref{eq435},
\begin{equation*}\begin{split}
|\mathrm{TV. }p^\delta(x,\cdot)-\mathrm{TV. }p_{\rm b}(x,\cdot)| \le& |\mathrm{TV. }p_w^\delta(x,\cdot)|+|\mathrm{TV. }p_s^\delta(x,\cdot)-\mathrm{TV. }p_{\rm b}(x,\cdot)|\nonumber\\
 \le& 2\mathrm{TV. }U_w^\delta(x,\cdot)+C_2L_3(x)\le CF(x)\le 2C_3C\varepsilon.
\end{split}
\end{equation*}
Particularly this implies that  $\mathrm{TV}. p^\delta(x,\cdot)$ is uniformly bounded. By Lemma \ref{l3}, the total variation of $U^\delta$ introduced by all strong rarefaction fronts is bounded by the total variation  of $p^\delta$, while those introduced  by $1,2,4$-wave fronts could be controlled by $L_0(x)$,  so $\mathrm{TV}. U^\delta(x,\cdot)$ is uniformly bounded (independent of $x$ and $\delta$).

Since $U_0=U_+$, applying Lemma \ref{l4} successively to the middle states $U_1, \cdots, U_k$, we may infer that the corresponding right-hand side of \eqref{E:5.2} is bounded by $C'_1 L_0(x)$, so
\begin{eqnarray}\label{eq438}
U^\delta\in D(U_+,2C'_1C_3\varepsilon)\subset D(U_+,M_1\varepsilon),\end{eqnarray}
where we take $M_1=\max\{2C_3C,2C'_1C_3\}$, and $\varepsilon_0\le\delta_0/M_1$. Thus $U^\delta$ is uniformly bounded (independent of $\delta$).

\begin{remark}\label{rm41}
As shown in \cite[Section 7.5, p.146]{bressan} or \cite[p.530, Section 14.5]{Da}, we can similarly prove that the total variations of $U^\delta$ on space-like curves are uniformly bounded (independent of $\delta$).
\end{remark}

\section{Existence of Entropy Solutions}\label{sec5}

The results in the previous section guarantee that we could construct $(U^\delta(x,y), g^\delta(x))$ for all $0\le x<+\infty.$ Then by standard compactness arguments as shown in \cite[Section IV, A]{CKY2013}, there is a subsequence $\{\delta_j\}_{j=1}^\infty$ that converges to zero, and functions $U, g$ so that $g^{\delta_j}(x)$ converges uniformly to $g(x)$ on any bounded interval, with the estimate \eqref{freeest} following from Remark \ref{rm22}, and $U^{\delta_j}$ converges to $U$ in $C([0,T];L^1)$ after suitable shifts in  $y$-variable. The estimates \eqref{eq:2.1.12} and \eqref{E:1.5} then follow directly from \eqref{eq437}\eqref{eq438}.

Therefore, to complete proof of Theorem \ref{T:1.1}, we need only to show that the limit $(U,g)$ found above is actually an entropy solution to problem \eqref{E:1.2}\eqref{E:1.3}. We follow the idea presented in \cite[pp.299-305]{HR2015}. In fact, once \eqref{eq435} is established, there is little difference from our situation to the standard theory.

Let $\mathcal{NP}(\tau)$ be the set of all non-physical fronts lying on $x=\tau$ in the approximate solution $(U^\delta, g^\delta).$ The key point is to show the total strength of non-physical fronts (called ghost waves in \cite{HR2015}) is bounded by  $O(1)\delta$, rather than $\delta^*$ proved before.

To this end, denote $\mathcal{G}_m(\tau)$ to be the set of  front that lies on $x=\tau$, has generation order $m$, in the approximate solution $(U^\delta, g^\delta).$ Since for $m\ge1$, all fronts of order $m$ are weak ones, we could use (6.32) in \cite[p.301]{HR2015}, (with $n=3$ and $T$ there replaced by $\delta^*$,) to obtain the inequality
\begin{eqnarray}\label{eq51}
\sharp(\mathcal{G}_m(\tau))\le C_1\left(\frac{3\delta^*}{\delta}\right)^{2m-1};
\end{eqnarray}
here $\sharp(A)$ is the cardinal number of a set $A$. For $T_m(\tau)$ being the total strength of fronts in $\mathcal{G}_m(\tau)$, Lemma 6.6 in \cite[p.301]{HR2015} claims that (with $T(t)$ there replaced by $\delta^*$)
\begin{eqnarray}\label{eq52}
T_m(\tau)\le C_1(C_2\delta^*)^m,
\end{eqnarray}
 and $C_1, C_2$ are constants depending only on the background solution.

We also note that once a non-physical front $\epsilon_0$ is produced for the first time at a point $(\tau_0, y_0)$ (i.e., in Cases 1-4, the simplified Riemann solver is adopted at the point), by \eqref{eq36} and all the interaction estimates listed in Lemmas \ref{L:5.1}--\ref{L:4.8}, there must hold
\begin{eqnarray}\label{eq53}
|\epsilon_0|\le C_1\mu_\delta.
\end{eqnarray}
Denote $\epsilon_k$ ($k=1,2,\ldots$) the non-physical front coming from $\epsilon_0$ after it interacts with physical wave fronts $\alpha^1,\cdots, \alpha^k$ for $k$ times (i.e., Cases 5 and 6), then $\epsilon_k$ travels on the same half-line issuing from $(\tau_0, y_0)$ with speed $\hat{\lambda}$, which is space-like.  The interaction estimates show that
\begin{align}\label{eq54}
|\epsilon_k|\le& \epsilon_0(1+C_1|\alpha^1|)(1+C_1|\alpha^2|)\cdots(1+C_1|\alpha^k|)\nonumber\\
\le& C_1\mu_\delta e^{2C_1(|\alpha^1|+\cdots+|\alpha^k|)}\le C_1\mu_\delta e^{C_1'}<C_2\mu_\delta.
\end{align}
Here, for the second inequality, we used that $|\alpha^j|\le\delta^*,$ ($j=1,\ldots, k$,) and for $\delta^*$ small, we have $C_1\delta^*<1/2,$ hence $\ln(1+C_1|\alpha^j|)\le 2C_1|\alpha^j|$ by a simple calculus inequality. The third inequality holds by Remark \ref{rm41}. The constants $C_1, C_1'$ and $C_2$ depend solely on the background solution. So \eqref{eq54} holds for any non-physical front, since $C_2>C_1$.

For a non-physical wave $\epsilon$, denote its generation order to be $\tilde{\epsilon}$. Then
\begin{align}\label{eq55}
T_{\mathcal{NP}}(\tau)\doteq& \sum_{\epsilon\in \mathcal{NP}}|\epsilon|=\sum_{\epsilon: \tilde{\epsilon}< k_0}|\epsilon|+
\sum_{\epsilon: \tilde{\epsilon}\ge k_0}|\epsilon|\nonumber\\
\le&C_2\mu_\delta\sum_{1\le m< k_0}\sharp(\mathcal{G}_m(\tau))+
\sum_{m\ge k_0}T_m(\tau)\nonumber\\
\le&C_2C_1\mu_\delta\sum_{m=1}^{k_0-1}\left(\frac{3\delta^*}{\delta}\right)^{2m-1}
+C_1\frac{(C_2\delta^*)^{k_0}}{1-C_2\delta^*}.
\end{align}
We require $\delta^*$ small so that $C_2\delta^*<1$ as we done before. Now we choose $k_0$ large so that $C_1\frac{(C_2\delta^*)^{k_0}}{1-C_2\delta^*}\le\delta/2.$ Then as $\delta$ and $k_0$ are fixed, we could choose $\mu_\delta$ small so that
$C_2C_1\mu_\delta\sum_{m=1}^{k_0-1}\left(\frac{3\delta^*}{\delta}\right)^{2m-1}\le\delta/2.$

So by using the simplified Riemann solver judiciously, one could obtain that
\begin{eqnarray}\label{eq56}
T_{\mathcal{NP}}(\tau)\le \delta,\qquad \forall \tau>0
\end{eqnarray}
for the approximate solution $(U^\delta, g^\delta)$. Since by our construction, the free-boundary is accurate in each approximate solution, we could proceed in the same way as in \cite[Section 7.4]{bressan} or \cite[pp. 304-305]{HR2015} to show consistency, namely the limit of  $(U^\delta, g^\delta)$ must be an entropy solution to problem \eqref{E:1.2}\eqref{E:1.3}.

As an example, we prove that for every nonnegative test function $\Psi\in C^1_c(\mathbb{R}^2)$, one has
\begin{eqnarray}\label{eq57}
\lim_{\delta\to0}\mathcal{M}^{\delta}\ge0,
\end{eqnarray}
where
\begin{equation}
\mathcal{M}^{\delta}\doteq\iint_{\{x\ge0,y\in\mathbb{R}\}\setminus \mathrm{corner}}\big(\eta(U^{\delta})\Psi_x+q(U^{\delta})
\Psi_y\big)\,\dd x \dd y+\int_{y>0}\eta({U}^{\delta}(0,y))\Psi(0, y)\,\dd y,
\end{equation}
and $\eta(U)=-\rho u S$, $q(U)=-\rho vS$. This justifies the entropy condition for weak solutions.

Recall that any approximate solution $U^\delta$ is piece-wise constant in $\{x>0, y\in\mathbb{R}\}$ (including the static gas), and each piece $\Omega_k$ is separated by $x=0,$ or wave fronts that are straight lines. Let $\Gamma_{\alpha}=\{y=y_{\alpha}(x), x>0\}$ be these lines where $U^\delta$ jumps (including the free-boundary $y=g^\delta(x)$). Then
\begin{equation*}\begin{split}
\mathcal{M}^{\delta}=&\sum_k\iint_{\Omega_k}\big(\eta(U^{\delta})\Psi_x
+q(U^{\delta})\Psi_y\big)\,\dd x\dd y+\int_{y>0}\eta({U^\delta}(0,y))\Psi(0, y)\,\dd y\\
=&\sum_k\int_{\p\Omega_k}\big(\eta(U^{\delta})\Psi, q(U^{\delta})\Psi)\cdot n_k\, \dd s-\sum_k\iint_{\Omega_k}\Big(\eta(U^{\delta})_x+q(U^{\delta})_y\Big)\Psi \,\dd x\dd y\\
&\qquad+\int_{y>0}\eta({U^\delta}(0,y))\Psi(0, y)\,\dd y\\
=&\sum_k\int_{\p\Omega_k}\big(\eta(U^{\delta})\Psi, q(U^{\delta})\Psi)\cdot n_k\, \dd s+\int_{y>0}\eta({U^\delta}(0,y))\Psi(0, y)\,\dd y\\
=&\sum_\alpha\int_{\Gamma_\alpha}\big(\eta(U^{\delta})\Psi, q(U^{\delta})\Psi)\cdot n_\alpha\, \dd s.
\end{split}\end{equation*}
Here $n_k$ is the outer normal vector on $\p\Omega_k$ (the boundary of the polygon $\Omega_k$),  and $n_\alpha$ denotes a normal vector of the line $\Gamma_\alpha$, satisfying
\[
n_\alpha \dd s=\pm(\dot{y}_{\alpha}(x), -1)\,\dd x.
\]

Fix a number  $T>0$ so that the support of $\Psi$ lies in $\{0<x<T\}$, which is independent of $\delta$. Then we get
\[
\mathcal{M}^{\delta}=\sum_\alpha\int^{T}_{0}h_{\delta, \alpha}(x)\Psi(x, y_{\alpha}(x))\,\dd x=\int^{T}_{0}\sum_\alpha h_{\delta, \alpha}(x)\Psi(x, y_{\alpha}(x))\,\dd x,
\]
where
\[
\begin{split}
&h_{\delta, \alpha}(x)=\dot{y}_{\alpha}(x)[\eta(U^{\delta})]-[q(U^{\delta})],\\[3pt]
&[\eta(U^{\delta})]=\eta(U^{\delta})(x, y_{\alpha}(x)+)-\eta(U^{\delta})(x, y_{\alpha}(x)-),\\[3pt]
&[q(U^{\delta})]=q(U^{\delta})(x, y_{\alpha}(x)+)-q(U^{\delta})(x, y_{\alpha}(x)-).
\end{split}
\]

If $\alpha$ is a shock front, then by entropy inequality for Riemann problems, one has $h_{\delta,\alpha}(x)\ge0$.

If $\alpha$ is a characteristic discontinuity, then by Rankine-Hugoniot jump conditions, $h_{\delta,\alpha}(x)\equiv0$.

If $\alpha$ is a rarefaction front, then by our rule of splitting of  rarefaction waves, $\dot{y}_\alpha(x)=\lambda_j(U^\delta(x, y_{\alpha}(x)+))$ for $j=1$ or $3$. Using Taylor expansion as in \cite[p.304]{HR2015}, and note that jump of $U$ across a rarefaction front is bounded by $O(1)\delta$, we have $h_{\delta,\alpha}(x)\le C_1\delta^2$. The number of rarefaction front is bounded by $O(1)/\delta$.

If $\alpha$ is a non-physical front, then $\dot{y}_\alpha(x)=\hat{\lambda}$. But by \eqref{eq56}, $\sum_{\alpha\in\mathcal{NP}(x)}|[U^\delta]|_{y_\alpha(x)}|\le\delta.$

Therefore, by mean value theorem, we have
\begin{eqnarray}
\mathcal{M}^\delta\ge -\int_0^T (O(1)\frac{1}{\delta}C_1\delta^2+C_1\delta)\norm{\Psi}_{L^\infty}\,\dd x= -C_4\delta,
\end{eqnarray}
with $C_4$ depending on $T$, $\norm{\Psi}_{L^\infty}$ and the background solution, as well as $\varepsilon_0$ (the total perturbation of initial data), but not on $\delta$. This proves \eqref{eq57} and finishes the proof of Theorem \ref{T:1.1}.

\section{Appendix}\label{sec6}
We show here that  $k_j>0$ $(j=1, 3)$ in the supersonic domain $\{U:u>c\}$. Set
\[
\theta_{ma}=\arctan \frac{c}{\sqrt{q^2-c^2}}, \quad \theta=\arctan \frac{v}{u}.
\]
Then
\[
\lambda_j=\tan(\theta+(-1)^{\sigma(j)}\theta_{ma})\; \text{ and \;} \sin \theta_{ma}=\frac{c}{q},
\]
where $\sigma(1)=1,$ $\sigma(3)=0.$
Direct calculations yield
\[
\begin{split}
\frac{\partial \lambda_j}{\partial u}=&\frac{\partial \lambda_j}{\partial \theta}\frac{\partial \theta}{\partial u}+\frac{\partial \lambda_j}{\partial \theta_{ma}}
\frac{\partial \theta_{ma}}{\partial u}\\
&=-\frac{\sec^2(\theta+(-1)^{\sigma(j)} \theta_{ma})}{\sqrt{q^2-c^2}}\sin (\theta+(-1)^{\sigma(j)}\theta_{ma}),\\
\frac{\partial \lambda_j}{\partial v}=&\frac{\partial \lambda_j}{\partial \theta}\frac{\partial \theta}{\partial v}+\frac{\partial \lambda_j}{\partial \theta_{ma}}
\frac{\partial \theta_{ma}}{\partial v}\\
&=\frac{\sec^2(\theta+(-1)^{\sigma(j)} \theta_{ma})}{\sqrt{q^2-c^2}}\cos (\theta+(-1)^{\sigma(j)}\theta_{ma}),\\
\frac{\partial \lambda_j}{\partial p}=
&\sec^2(\theta+\theta_{ma})\frac{\partial \theta_{ma}}{\partial p}=(-1)^{\sigma(j)}\sec^2(\theta+(-1)^{\sigma(j)} \theta_{ma})\frac{\gamma}{2\rho c^2}\tan \theta_{ma},\\
\frac{\partial \lambda_j}{\partial \rho}=
&(-1)^{\sigma(j)}\sec^2(\theta+\theta_{ma})\frac{\partial \theta_{ma}}{\partial \rho}=(-1)^{{\sigma(j)}+1}\sec^2(\theta+(-1)^{\sigma(j)} \theta_{ma})\frac{1}{2\rho}\tan \theta_{ma}.
\end{split}
\]
To normalize $r_j$ so that $\nabla \lambda_j \cdot r_j=1$, $j=1, 3$, we need to take
\begin{equation}\label{A1}
k_1(U)=\frac{2\sqrt{q^2-c^2} \cos^3(\theta-\theta_{ma})}{\gamma+1}, \quad k_3(U)=
\frac{2\sqrt{q^2-c^2}\cos^3(\theta+\theta_{ma})}{\gamma+1}.
\end{equation}
Since
\[
u^2(q^2-c^2)-v^2 c^2=(u^2-c^2)q^2>0,
\]
and
\[
\cos(\theta-\theta_{ma})=\frac{vc+u\sqrt{q^2-c^2}}{q^2},\quad \cos(\theta+\theta_{ma})=\frac{u\sqrt{q^2-c^2}-vc}{q^2},
\]
we have $\cos(\theta\pm\theta_{ma})>0$ in the supersonic region $\{u>c\}$. Hence, from \eqref{A1}, we see $k_j>0$ for $j=1, 3$.

\section*{Acknowledgments} The authors are grateful to the Editors and anonymous Referees for their great patience and carefully reading a previous version of the manuscript. Their valuable comments and  suggestions help us substantially revise the manuscript.


\medskip
Received xxxx 20xx; revised xxxx 20xx.
\medskip

\end{document}